\newenvironment{customthm}[1]
  {\innercustomthm}
  {\endinnercustomthm}
\newtheorem{theorem}{Theorem}[section]
\newtheorem{lemma}[theorem]{Lemma}
\newtheorem{corollary}[theorem]{Corollary}
\theoremstyle{definition}
\newtheorem{example}[theorem]{Example}
\theoremstyle{remark}
\newtheorem{remark}[theorem]{Remark}
\numberwithin{equation}{section}
\begin{document}
	\setcounter{page}{1}
	
	\title[  Periodic and discrete multilinear  operators on Lebesgue spaces]{  $L^p$-boundedness and $L^p$-nuclearity of multilinear pseudo-differential operators on $\mathbb{Z}^n$ and the torus $\mathbb{T}^n$ }
	
	\author[D. Cardona]{Duv\'an Cardona}
	\address{
		Duv\'an Cardona:
		\endgraf
		Department of Mathematics  
		\endgraf
		Pontificia Universidad Javeriana
		\endgraf
		Bogot\'a-Colombia
		\endgraf
		Current address
		\endgraf
		Department of Mathematics: Analysis Logic and Discrete Mathematics
		\endgraf
		Ghent University
		\endgraf
		Ghent-Belgium
		\endgraf
		{\it E-mail address} {\rm duvanc306@gmail.com}
		}
	
	\author[V. Kumar]{Vishvesh Kumar}
	\address{
		Vishvesh Kumar:
		\endgraf
		Department of Mathematics             
		\endgraf
		Indian Institute of Technology Delhi
		\endgraf
		Huaz Khas, New Delhi- 110016
		\endgraf
		Delhi, India
		\endgraf
		Current address
		\endgraf
		Department of Mathematics: Analysis Logic and Discrete Mathematics
		\endgraf
		Ghent University
		\endgraf
		Ghent-Belgium
		\endgraf
		{\it E-mail address} {\rm vishveshmishra@gmail.com}
		\endgraf
	}


	\subjclass[2010]{ 58J40.; Secondary 47B10, 47G30, 35S30 }
	
	\keywords{ Multilinear pseudo-differential operator, Discrete operator, Periodic operator, Nuclearity, Boundedness, Fourier integral operators, Multilinear analysis}
	
	\date{Received: xxxxxx;  Revised: yyyyyy; Accepted: zzzzzz.
	}
	
	\begin{abstract} In this article, we begin a systematic study of the boundedness and the nuclearity properties of multilinear periodic pseudo-differential operators and multilinear discrete pseudo-differential operators on $L^p$-spaces. First, we prove analogues of known multilinear Fourier multipliers theorems (proved by Coifman and Meyer, Grafakos, Tomita, Torres, Kenig, Stein, Fujita, Tao, etc.) in the context of periodic and discrete multilinear pseudo-differential operators. For this, we use the periodic analysis of pseudo-differential operators developed by Ruzhansky and Turunen. Later, we investigate the $s$-nuclearity, $0<s \leq 1,$ of periodic and discrete pseudo-differential operators. To accomplish this, we classify those $s$-nuclear multilinear integral operators on arbitrary Lebesgue spaces defined on $\sigma$-finite measures spaces. We also study similar properties for periodic Fourier integral operators. Finally, we present some applications of our study to deduce the periodic Kato-Ponce inequality and to examine the $s$-nuclearity of multilinear Bessel potentials as well as the $s$-nuclearity of periodic Fourier integral operators admitting suitable types of singularities.
	
	\textbf{MSC 2010.} Primary {58J40.; Secondary 47B10, 47G30, 35S30}.
	\end{abstract} \maketitle
	
	\tableofcontents
	
	\section{Introduction}
	In this investigation we study  the boundedness and the nuclearity of periodic and discrete multilinear pseudo-differential operators on $L^p$-spaces. These operators are defined as follows. If $m:\mathbb{T}^{n}\times \mathbb{Z}^{nr}\rightarrow \mathbb{C},$ $\mathbb{T}^n\cong [0,1)^n,$ is a measurable function, usually referred as symbol, then the periodic multilinear-pseudo-differential operator associated with symbol $m$ is the multilinear operator defined  by
	\begin{equation}
	T_m(f)(x)=\sum_{\xi\in \mathbb{Z}^{nr}}e^{i2\pi x\cdot (\xi_1+\xi_2+\cdots +\xi_r)  }m(x,\xi)(\mathscr{F}_{\mathbb{T}^n}{f}_{1})(\xi_1)\cdots (\mathscr{F}_{\mathbb{T}^n}{f}_{r})(\xi_r),\,x\in\mathbb{T}^n,
	\end{equation} where $\xi=(\xi_1,\xi_2,\cdots ,\xi_r),$ $f=(f_1,\cdots,f_r)\in \mathscr{D}(\mathbb{T}^n)^r,$ and $$ 
	(\mathscr{F}_{\mathbb{T}^n}{f}_{i})(\xi_i)=\int\limits_{\mathbb{T}^n}e^{-i2\pi x_i\xi_i}f_i(x_i)dx_i$$
	is the periodic Fourier transform of $f_i.$  On the other hand, if $a:\mathbb{Z}^{n}\times \mathbb{T}^{nr}\rightarrow \mathbb{C}$ is a measurable function, then the discrete multilinear-pseudo-differential operator associated with symbol $a$ is the multilinear operator defined  by
	\begin{equation}
	T_a(g)(\ell)=\int\limits_{ \mathbb{T}^{nr}}e^{i2\pi \ell\cdot   (\eta_1+\cdots +\eta_r) }a(\ell,\eta)(\mathscr{F}_{\mathbb{Z}^n}{g}_{1})(\eta_1)\cdots (\mathscr{F}_{\mathbb{Z}^n}{g}_{r})(\eta_r)d\eta,\,\,\ell\in\mathbb{Z}^n,
	\end{equation} where, $\eta=(\eta_1,\cdots,\eta_r),$ $g=(g_1,\cdots,g_r)\in \mathscr{S}(\mathbb{Z}^n)^r,$ and $$ 
	(\mathscr{F}_{\mathbb{Z}^n}{g}_{i})(\eta_i)=\sum\limits_{\ell_i\in\mathbb{Z}^n}e^{-i2\pi \ell_i\eta_i}g_i(\ell_i)$$
	is the discrete Fourier transform of $g_i.$ For $r\geq 2, $  these operators have been studied by V. $\textnormal{Catan}\breve{a}$ in \cite{Catana}. If $r=1,$ these quantization formulae can be reduced to the familiar expressions
	\begin{equation}\label{Agranovisch}
	T_m(f)(x)=\sum_{\xi\in \mathbb{Z}^{n}}e^{i2\pi x\cdot \xi  }\,m(x,\xi)\,(\mathscr{F}_{\mathbb{T}^n}{f})(\xi),\,x\in\mathbb{T}^n,
	\end{equation} and 
	\begin{equation}\label{Molahajloo}
	T_a(g)(\ell)=\int\limits_{ \mathbb{T}^{n}}e^{i2\pi \ell\cdot   \eta }\,a(\ell,\eta)(\mathscr{F}_{\mathbb{Z}^n}{g})(\eta)\,d\eta,\,\,\ell\in\mathbb{Z}^n.
	\end{equation} 
	Pseudo-differential operators represented by \eqref{Agranovisch} were defined by Volevich and Agranovich\cite{ag}. Later, this theory was developed by  McLean\cite{Mc}, Turunen and Vainikko\cite{tur},  and Ruzhansky and Turunen\cite{Ruz-2}. Nevertheless, the work of  Ruzhansky and Turunen \cite{Ruz-2,Ruz}, Cardona  \cite{Duvan3,Duvan4}, Delgado \cite{Profe} and Molahajloo and Wong \cite{s1,s2,m} provide some complementary results for this periodic theory. Ruzhansky and Turunen \cite{Ruz}, and Cardona, Messiouene and Senoussaoui \cite{CarMessiSeno} studied some of mapping properties for the periodic Fourier integral operators. 
	
	Pseudo-differential operators on $\mathbb{Z}^n$ (discrete pseudo-differential operators) were introduced by Molahajloo in \cite{m}, and some of their properties were developed in the last few years, see \cite{CardonaZn, DW,Rab1,Rab2,Rab3,Rab4,rod}. However, Botchway, Kibiti, and Ruzhansky in their recent fundamental work \cite{Ruzhansky} investigated the discrete pseudo-differential calculus and its applications to difference equations.
	
	In both cases (periodic pseudo-differential operators and discrete pseudo-differential operators), some of the results obtained in the above mentioned references are periodic or discrete analogues of well-known results for pseudo-differential operators on $\mathbb{R}^n,$ which are linear operators of the form
	\begin{equation}
	Af(x)=\int\limits_{\mathbb{R}^n}e^{i2\pi x\cdot \xi}\,a(x,\xi)\widehat{f}(\xi)\,d\xi,\,\,\,f\in \mathscr{D}(\mathbb{R}^n),
	\end{equation} where $\widehat{f}$ is the euclidean Fourier transform of $f,$ (see H\"ormander \cite{Hor2} for a complete treatment on this subject). The nuclearity of pseudo-differential operators on $\mathbb{R}^n$ has been treated in details by Aoki \cite{Aoki} and Rempala \cite{Rempala}. Multilinear pseudo-differential operators studied by several authors  including  B\'enyi,  Maldonado, Naibo, and  Torres, \cite{Benyi1,Benyi2}, Michalowski, Rule and Staubach, Miyachi and Tomita \cite{Michalowski,MiTo1,MiTo2,MiTo3} and references therein. It is worth mentioning that the multilinear analysis for multilinear multipliers of the form
	\begin{equation}
	T_a(f)(x)=\int\limits_{ \mathbb{R}^{nr}}e^{i2\pi x\cdot   (\eta_1+\cdots +\eta_r) }a(\eta)\widehat{f}_{1}(\eta_1)\cdots \widehat{f}_{r}(\eta_r)d\eta,\,\,x\in\mathbb{R}^n,
	\end{equation}
	born with the multilinear results by Coifman and Meyer (see \cite{CoifMeye1,CoifMeye2}), where it was shown that the condition
	\begin{equation}
	|\partial_{\eta_1}^{\alpha_1}\partial_{\eta_2}^{\alpha_2}\cdots\partial_{\eta_r}^{\alpha_r} a(\eta_1,\eta_2,\cdots,\eta_r)|\leq C_{\alpha}(|\eta_1|+|\eta_2|+\cdots +|\eta_r|)^{-|\alpha|}, 
	\end{equation}
	for sufficiently many multi-indices $\alpha=(\alpha_1,\alpha_2,\cdots,\alpha_r),$ implies the boundedness of $T_a$ from $L^{p_1}(\mathbb{R}^n)\times L^{p_2}(\mathbb{R}^n)\times \cdots \times L^{p_r}(\mathbb{R}^n) $ into  $ L^p(\mathbb{R}^n)$ provide that $1/p=1/p_1+
	1/p_2+\cdots +1/p_r,$ and $1\leq p_i,p<\infty.$   A generalization of this result was obtained by Tomita \cite{Tomitabilinear}, where it was proved that the multilinear  H\"ormander condition \begin{equation}
	\Vert a\Vert_{l.u.,H^s_{loc}(\mathbb{R}^{nr})}:=\sup_{k\in\mathbb{Z}}\Vert a(2^{k}\eta_1
	,2^k\eta_2,\cdots ,2^k\eta_r)\phi\Vert_{H^s}<\infty,\,\,\phi\in \mathscr{D}(0,\infty), \,\,s>\frac{nr}{2},
	\end{equation} implies the boundedness of $T_a$  from $L^{p_1}(\mathbb{R}^n)\times L^{p_2}(\mathbb{R}^n)\times \cdots \times L^{p_r}(\mathbb{R}^n)$ into  $ L^p(\mathbb{R}^n)$ provided that $1/p=1/p_1+
	1/p_2+\cdots +1/p_r,$ and $1\leq p_i,p<\infty.$ The case $r=1$ is known as the H\"ormander-Mihlin theorem. These multilinear theorems have been extended to Hardy spaces for some suitable  ranges  $0<p_i,p<\infty,$ in the works of Grafakos, Torres, Miyachi, Fujita, Tomita, Kenig, Stein, Muscalo, Thiele, Tao \cite{Fujita,Gra1,Gra2,Gra3,Gra4,Kenig,Tao}.  Periodic versions of these results will be presented in the next section as well as a study on the nuclearity of multilinear operators on $L^p$-spaces. Several applications will be considered including the toroidal Kato-Ponce inequality and some nuclear properties for multilinear Bessel potentials. 
	
	\section{Summary of Results} In this section, we state the main results of our investigation. First we consider those results on the boundedness of  periodic  and discrete multilinear operators which are periodic or discrete analogues of some  known results in the context of  multilinear operators on  $\mathbb{R}^n.$ In the last part of this section , we will describe those multilinear operators on $\mathbb{Z}^n$ and the torus $\mathbb{T}^n$ which admit a $s$-nuclear, $0<s\leq 1,$ extension on multilinear $L^p$-spaces.\\
	
	\noindent{\bf{Boundedness of periodic  multilinear  operators.}} Now, we present our main results related to the boundedness of periodic multilinear operators. At times, we denote $(x,\xi):=(x,\xi_1,\cdots,\xi_r)=x\cdot(\xi_1+\cdots +\xi_r).$
	
	\begin{customthm}{3.1} Let us assume that  $m$ satisfies the H\"ormander condition of order $s>0$
		\begin{equation*}
		\Vert m\Vert_{L^\infty(\mathbb{T}^n,l.u.,H^s_{loc}(\mathbb{R}^n))}:=\textnormal{ess}\sup_{x\in\mathbb{T}^n}\Vert m(x,\cdot)\Vert_{l.u.,\, H^s_{loc}}<\infty. 
		\end{equation*} Then the multilinear periodic pseudo-differential operator $T_m$ associated with $m$ extends to a bounded operator from $L^{p_1}(\mathbb{T}^n)\times L^{p_2}(\mathbb{T}^n)\times \cdots \times L^{p_r}(\mathbb{T}^n)$ into $L^p( \mathbb{T}^n)$ provided that $s>\frac{3nr}{2}$ and
		\begin{equation*}
		\frac{1}{p}=\frac{1}{p_1}+\cdots+ \frac{1}{p_r},\,\,\,1\leq p<\infty,\,1\leq p_i\leq \infty.
		\end{equation*}
	\end{customthm}
	With the help of previous result we prove the following fact. We use the notation    $$\langle \xi\rangle:=\max\{1,|\xi_1|+\cdots +|\xi_r|\},$$ for all  $\xi\in \mathbb{R}^{nr}.$

	\begin{customthm}{3.3}\label{Teoremapseuod}
		Let us assume that $m$ satisfies the discrete symbol inequalities
		\begin{equation*}
		\sup_{x\in\mathbb{T}^n}|\Delta_{\xi_1}^{\alpha_1}\Delta_{\xi_2}^{\alpha_2}\cdots \Delta_{\xi_r}^{\alpha_r}m(x,\xi_1,\cdots,\xi_r)|\leq C_\alpha\langle \xi\rangle^{-|\alpha|},
		\end{equation*} for all $|\alpha|:=  |\alpha_1|+\cdots +\cdots+|\alpha_r|\leq [3nr/2]+1.$ Then the periodic multilinear pseudo-differential operator $T_m$ extends to a bounded operator from $L^{p_1}(\mathbb{T}^n)\times L^{p_2}(\mathbb{T}^n)\times \cdots \times L^{p_r}(\mathbb{T}^n)$ into $L^p( \mathbb{T}^n)$ provided that 
		\begin{equation*}
		\frac{1}{p}=\frac{1}{p_1}+\cdots+ \frac{1}{p_r},\,\,\,1\leq p<\infty,\,1\leq p_i\leq \infty.
		\end{equation*}
	\end{customthm}
	Now,  we will consider Fourier integral operators with periodic phases.
	\begin{customthm}{3.5}
		Let $1< p<\infty$ and let $\phi$ be a real valued continuous function defined on $\mathbb{T}^n\times\mathbb{R}^{nr}.$ Let us assume that $a:\mathbb{T}^n\times\mathbb{R}^{nr}\rightarrow \mathbb{C}$ is a continuous bounded function and the multilinear Fourier integral operator
		\begin{equation*}
		Tf(x)=\int\limits_{\mathbb{R}^{nr}}e^{i\phi(x,\xi_1,\xi_2,\cdots, \xi_r)}a(x,\xi_1,\xi_2,\cdots, \xi_r)\widehat{f}_1(\xi_1)\cdots \widehat{f}_r(\xi_r)d\xi
		\end{equation*}  extends to a bounded multilinear operator from $L^{p_1}(\mathbb{R}^n)\times L^{p_2}(\mathbb{R}^n)\times\cdots\times L^{p_r}(\mathbb{R}^n)$ into $L^p(\mathbb{R}^n).$ Then the periodic multilinear Fourier integral operator
		\begin{equation*}\label{cp}
		Af(x):=\sum_{\xi\in\mathbb{Z}^{nr}}e^{i\phi(x,\xi_1,\xi_2,\cdots,\xi_r)}a(x,\xi_1,\xi_2,\cdots,\xi_r)(\mathscr{F}_{\mathbb{T}^n}{f}_{1})(\xi_1)\cdots (\mathscr{F}_{\mathbb{T}^n}{f}_{r})(\xi_r)
		\end{equation*} also extends to a  bounded multilinear operator from $L^{p_1}(\mathbb{T}^n)\times L^{p_2}(\mathbb{T}^n)\times\cdots \times L^{p_r}(\mathbb{T}^n)$ into $L^p(\mathbb{T}^n),$ provided that 
		\begin{equation*}
		\frac{1}{p_1}+\cdots+\frac{1}{p_r}=\frac{1}{p},\,\,1\leq p_i<\infty.
		\end{equation*}
		Moreover, there exists a positive constant  $C_p$ such that  $$\Vert A \Vert_{\mathscr{B}(L^{p_1}(\mathbb{T}^n)\times L^{p_2}(\mathbb{T}^n)\times\cdots\times L^{p_r}(\mathbb{T}^n),L^p(\mathbb{T}^n))}\leq C_p\Vert T\Vert_{\mathscr{B}(L^{p_1}(\mathbb{R}^n)\times L^{p_2}(\mathbb{R}^n)\times\cdots\times L^{p_r}(\mathbb{R}^n),L^p(\mathbb{R}^n))}.$$
	\end{customthm}
	We present the following multilinear  version of the Stein-Weiss  multiplier theorem (see Theorem 3.8 of Stein and Weiss \cite{SteWei}).

	\begin{customthm}{3.6}\label{vishvesh}
		Let $1< p<\infty$  and let  $a:\mathbb{R}^{nr}\rightarrow \mathbb{C}$ be a continuous bounded function. Let us assume that the multilinear Fourier multiplier operator
		\begin{equation*}
		Tf(x)=\int\limits_{\mathbb{R}^{nr}}e^{i2\pi(x,\xi_1,\xi_2,\cdots ,\xi_r)}a(\xi_1,\xi_2,\cdots ,\xi_r)\widehat{f}_1(\xi_1)\cdots \widehat{f}_r(\xi_r)d\xi
		\end{equation*} extends to a bounded multilinear operator from $L^{p_1}(\mathbb{R}^n)\times L^{p_2}(\mathbb{R}^n)\times\cdots \times L^{p_r}(\mathbb{R}^n)$ into $L^p(\mathbb{R}^n).$ Then the periodic multilinear Fourier multiplier 
		\begin{equation*}\label{cp}
		Af(x):=\sum_{\xi\in\mathbb{Z}^{nr}}e^{i2\pi(x, \xi_1,\xi_2,\cdots,\xi_r)}a(\xi_1,\xi_2,\cdots,\xi_r)(\mathscr{F}_{\mathbb{T}^n}{f}_{1})(\xi_1)\cdots (\mathscr{F}_{\mathbb{T}^n}{f}_{r})(\xi_r)
		\end{equation*} also extends to a  bounded multilinear operator from $L^{p_1}(\mathbb{T}^n)\times L^{p_2}(\mathbb{T}^n)\times\cdots\times L^{p_r}(\mathbb{T}^n)$ into $L^p(\mathbb{T}^n),$ provided that 
		\begin{equation*}
		\frac{1}{p_1}+\cdots+\frac{1}{p_r}=\frac{1}{p},\,\,1\leq p_i<\infty.
		\end{equation*}
		Moreover, there exists a positive constant $C_p$ such that the following  inequality holds. $$\Vert A \Vert_{\mathscr{B}(L^{p_1}(\mathbb{T}^n)\times L^{p_2}(\mathbb{T}^n)\times\cdots \times L^{p_r}(\mathbb{T}^n),L^p(\mathbb{T}^n))}\leq C_p\Vert T\Vert_{\mathscr{B}(L^{p_1}(\mathbb{R}^n)\times L^{p_2}(\mathbb{R}^n)\times \cdots \times L^{p_r}(\mathbb{R}^n),L^p(\mathbb{R}^n))}.$$
	\end{customthm}
	
	The multilinear Coifman-Meyer theorem together with Theorem \ref{vishvesh} allow us to prove the following multilinear estimate.
	\begin{customthm}{3.7} Let $T_m$ be a periodic multilinear Fourier multiplier. Let us assume that the symbol $m$ satisfies the estimates
		
		$$|\Delta_{\xi_1}^{\alpha_{1} }\cdots \Delta_{\xi_r}^{\alpha_{r}  }m(\xi_1,\xi_2,\cdots,\xi_r)|\leq C_\alpha \langle \xi \rangle^{-|\alpha_1|-\cdots -|\alpha_r|},\,\,\,|\alpha|\leq [\frac{nr}{2}]+1.$$
		Then the operator $T_m$ extends to a  bounded multilinear operator from $L^{p_1}(\mathbb{T}^n)\times L^{p_2}(\mathbb{T}^n)\times\cdots \times L^{p_r}(\mathbb{T}^n)$ into $L^p(\mathbb{T}^n),$ provided that 
		\begin{equation*}
		\frac{1}{p_1}+\cdots+\frac{1}{p_r}=\frac{1}{p},\,\,1\leq p_i<\infty.
		\end{equation*}
		
	\end{customthm}
	For $r=1,$ Theorem \ref{Teoremapseuod} was generalized  by Delgado in \cite{Profe} to more general periodic H\"ormander classes of limited regularity. The condition on the number of discrete derivatives in the preceding result can be relaxed if we assume regularity in $x.$  We show it in the following theorem. 
	\begin{customthm}{3.8}\label{limitedregularity} Let $T_m$ be a periodic multilinear pseudo-differential operator. Let us assume that $m$ satisfies toroidal conditions of the type,
		\begin{equation*}
		|\partial_{x}^\beta\Delta_{\xi_1}^{\alpha_{1} }\cdots \Delta_{\xi_r}^{\alpha_{r}  } m(x,\xi_1,\xi_2,\cdots,\xi_r)|\leq C_\alpha \langle \xi \rangle^{-|\alpha_1|-\cdots -|\alpha_r|},
		\end{equation*} where $|\alpha|\leq [\frac{nr}{2}]+1,$ and $|\beta|\leq [\frac{n}{p}]+1.$ Then $T_m$ extends to a  bounded multilinear operator from $L^{p_1}(\mathbb{T}^n)\times L^{p_2}(\mathbb{T}^n)\times\cdots \times L^{p_r}(\mathbb{T}^n)$ into $L^p(\mathbb{T}^n),$ provided that 
		\begin{equation*}
		\frac{1}{p_1}+\cdots+\frac{1}{p_r}=\frac{1}{p},\,\,1\leq p_i<\infty.
		\end{equation*}
	\end{customthm}

	 Let us observe that Theorem \ref{limitedregularity} has been proved for $r=1$ in Ruzhansky and Turunen \cite{Ruz}. Now we discuss some applications of our multilinear analysis. 
	Theorem \ref{Teoremapseuod} applied to the bilinear operator
	\begin{equation}
	    B_s(f,g):=J^s(f\cdot g),
	\end{equation}
	where $J^s$ is the periodic  fractional derivative operator $(\mathcal{L})^{s/2},$ or the periodic  Bessel potential of order $s>0,$
	$(1+\mathcal{L})^{s/2},$ implies  the (well known) periodic Kato-Ponce inequality (see Muscalu and Schlag \cite{Muscaluperiodic}):
	
\begin{equation}\label{periodickatoponce}
    \Vert J^s( f \cdot g)\Vert_{L^r(\mathbb{T}^n)}\lesssim \Vert J^s f\Vert_{L^{p_1}(\mathbb{T}^n)}\Vert g\Vert_{L^{q_1}(\mathbb{T}^n)}+\Vert  f\Vert_{L^{p_2}(\mathbb{T}^n)}\Vert J^s g\Vert_{L^{q_2}(\mathbb{T}^n)}
\end{equation}	
	where $\frac{1}{p_1}+\frac{1}{q_1}=\frac{1}{p_2}+\frac{1}{q_2}=\frac{1}{r},$ $1\leq p_i,q_i\leq\infty,$ $1<r<\infty,$ and  $\mathcal{L}=-\frac{1}{4\pi^{2}}(\sum_{j=1}^n\partial_{\theta_j}^2)$ is the Laplacian on the torus.
We develop this application of Theorem \ref{Teoremapseuod} to PDEs in Remark \ref{limitedregularity'}. \\
\\

	\noindent{\bf{Boundedness of discrete  multilinear pseudo-differential operators.}}
	Our main result about the boundedness of discrete multilinear pseudo-differential operators is the following.
	
	\begin{customthm}{3.10}
		Let  $\sigma\in L^\infty(\mathbb{Z}^{n}, C^{2\varkappa} (\mathbb{T}^{nr})).$ Let us assume that $\sigma$ satisfies the following discrete inequalities
		\begin{equation*}
		|\partial_{\xi}^{\beta} \sigma(\ell,\xi)|\leq C_{\beta},\,\,\ell\in \mathbb{Z}^{n},\,\,\xi\in\mathbb{T}^{nr},\,\sigma(\ell,\xi)=\sigma(\ell) (\xi) ,
		\end{equation*} for all $\beta$ with $|\beta|= 2\varkappa.$ Then $T_\sigma$ extends to a bounded operator from $L^{p_1}(\mathbb{Z}^n)\times L^{p_2}(\mathbb{Z}^n)\times \cdots\times L^{p_r}(\mathbb{Z}^n )$ into $L^{s}(\mathbb{Z}^n )$ provided that $1\leq p_j\leq p \leq\infty,$ and $$ \frac{1}{s}-\frac{1}{p}<\frac{2\varkappa}{nr}-1.$$ 
		\end{customthm} 
	As a consequence of the previous result, 	if $\sigma\in C^{2\varkappa}(\mathbb{Z}^{n}\times \mathbb{T}^{n}),$  satisfies the following discrete inequalities
		\begin{equation*} \label{derivedcor++}
		|\partial_{\xi}^{\beta} \sigma(\ell,\xi)|\leq C_{\beta},\,\,\ell\in \mathbb{Z}^{n},\,\,\xi\in\mathbb{T}^{n},  
		\end{equation*} for all $\beta$ with $|\beta|= 2\varkappa,$ then $T_\sigma$ extends to a bounded operator from $L^{p}(\mathbb{Z}^n)$ into $L^{p}(\mathbb{Z}^n )$ provided that $1\leq p\leq \infty,$ and $\varkappa>n/2.$ This  implies the $L^p$-boundedness of pseudo-differential operators associated to the discrete H\"ormander class $S^0_{0,0}(\mathbb{Z}^n\times \mathbb{T}^n )$ introduced in Botchway, Kibiti and Ruzhansky \cite{Ruzhansky}.\\

	\noindent{\bf{$s$-Nuclearity for periodic and discrete multilinear pseudo-differential operators.}}
	In order to study those multilinear operators admitting $s$-nuclear extensions, we prove the following multilinear version of a result by Delgado, on the nuclearity of integral operators on Lebesgue spaces (see \cite{Delgado,D2}). So, in the following multilinear theorem we characterize those  $s$-nuclear (multilinear) integral operators on arbitrary ($\sigma$-finite) measure spaces $(X,\mu)$. 
	
	\begin{customthm}{4.4}
		Let $(X_i, \mu_i), 1 \leq i \leq r$ and $(Y, \nu)$ be $\sigma$-finite measure spaces. Let $1 \leq p_i,p <\infty, 1 \leq i \leq r$ and let $p_i', q$ be such that $\frac{1}{p_i}+ \frac{1}{p_i'}=1, \frac{1}{p}+\frac{1}{q}=1$ for $1 \leq i \leq r.$ Let $T: L^{p_1}(\mu) \times L^{p_2}(\mu_2) \times \cdots \times L^{p_r}(\mu_r) \rightarrow L^p(\nu)$ be a multilinear operator. Then $T$ is a $s$-nuclear, $0<s \leq 1,$ operator if and only if there exist sequences $\{g_n\}_n$ with $g_n=(g_{n1}, g_{n2}, \ldots, g_{nr})$ and $\{h_n\}_n$ in $L^{p_1'}(\mu_1) \times L^{p_2'}(\mu_2) \times \cdots \times L^{p_r'}(\mu_r)$ and $L^p(\nu)$ respectively, such that $\sum_{n} \|g_n\|_{L^{p_1'}(\mu_1) \times L^{p_2'}(\mu_2) \times \cdots \times L^{p_r'}(\mu_r)}^s \|h_n\|_{L^p(\nu)}^s<\infty$ and for all $f = (f_1,f_2,\ldots, f_r) \in L^{p_1}(\mu) \times L^{p_2}(\mu_2) \times \cdots \times L^{p_r}(\mu_r) $ we have
		\begin{align*}
		(Tf)(y)&= \int_{X_1,X_2,\cdots,X_r} \left( \sum_{n=1}^\infty g_n(x)\, h_n(y) \right) f(x)\, d(\mu_1 \otimes \mu_2 \otimes \cdots \otimes \mu_r)(x) \\
		& = \int_{X_1} \int_{X_2} \cdots \int_{X_r} \left( \sum_{n=1}^\infty g_{n1}(x_1)\, g_{n2}(x_2)\ldots g_{nr}(x_r)\, h_n(y) \right) \\
		& \hspace{4cm} \times f_1(x_1) \,f_2(x_2) \ldots f_r(x_r)\, d\mu_1(x_1)\, d\mu_2(x_2) \cdots d\mu_r(x_r)
		\end{align*}
		for almost every $y \in Y.$
	\end{customthm}
	
	This criterion applied to discrete and periodic operators gives the following characterizations. 
	
	\begin{customthm}{5.1} 
		Let $a$ be a measurable function defined on $\mathbb{Z}^n \times \mathbb{T}^{nr}.$ The multilinear pseudo-differential operator $T_a: L^{p_1}(\mathbb{Z}^n)\times L^{p_2}(\mathbb{Z}^n) \times \cdots L^{p_r}(\mathbb{Z}^n) \rightarrow L^{p}(\mathbb{Z}^n), 1 \leq p_i< \infty,$ for all $1 \leq i \leq r$ is a $s$-nuclear, $0 < s \leq 1, $ operator if and only if  the following decomposition holds:
		$$a(x,\xi)=e^{-i2\pi \tilde{x}\cdot \xi}\sum_{k}   h_k(x)\mathscr{F}_{\mathbb{Z}^{nr}}(g_k)(-\xi),\xi\in\mathbb{T}^{nr},x\in\mathbb{Z}^{n},$$ 
		where $\tilde{x}= (x,x, \ldots,x)  \in (\mathbb{Z}^n)^r;$ $\{h_k\}_k$ and $\{g_k\}_k$ with $g_k= (g_{k1}, g_{k2}, \ldots, g_{kr})$  are two sequences in $L^{p}(\mathbb{Z}^n)$ and $L^{p_1'}(\mathbb{Z}^n)\times L^{p_2'}(\mathbb{Z}^n) \times \cdots \times L^{p_r'}(\mathbb{Z}^n)$ respectively such that $\sum_{n=1}^\infty \|h_n\|_{L^{p}(\mathbb{Z}^n)}^s \|g_n\|_{L^{p_1'}(\mathbb{Z}^n)\times L^{p_2'}(\mathbb{Z}^n) \times \cdots \times L^{p_r'}(\mathbb{Z}^n)}^s <\infty.$
	\end{customthm}
	
	\begin{customthm}{5.4} Let $m$ be a measurable function on $\mathbb{T}^n \times \mathbb{Z}^{nr}.$ Then the mutlilinear pseudo-differential operator $T_m:L^{p_1}(\mathbb{T}^n) \times \cdots \times L^{p_r}(\mathbb{T}^n) \rightarrow L^p(\mathbb{T}^n),$ $1 \leq p_i,p < \infty $ for $1\leq i \leq r,$ is a $s$-nuclear, $0 < s \leq 1, $ operator if and only if there exist two sequences $\{g_k\}_k$ with $g_{k}= \left(g_{k1},g_{k2}, \ldots, g_{kr}\right)$ and $\{h_k\}_k$ in $L^{p_1'}(\mathbb{T}^n) \times \cdots \times L^{p_r'}(\mathbb{T}^n),\, \frac{1}{p_i}+\frac{1}{p_i'}=1$ for $1\leq i \leq r$ and $L^p(\mathbb{T}^n)$  respectively such that  $\sum_{k} \|g_k\|_{L^{p_1'}(\mathbb{T}^n) \times \cdots \times L^{p_r'}(\mathbb{T}^n)}^s \|h_k\|_{L^p(\mathbb{T})}^s <\infty $ and $$ m(x, \eta)= e^{-i2 \pi \tilde{x} \cdot \eta} \sum_{k} h_k(x)\, (\mathscr{F}_{\mathbb{T}^{nr}}g_k)(-\eta),\,\, \eta \in \mathbb{Z}^{nr}$$ where $\tilde{x}= (x,x,\ldots,x) \in \mathbb{T}^{nr}.$
	\end{customthm}
	Now, we present the following sharp result on the $s$-nuclearity of periodic Fourier integral operators.
	
	\begin{customthm}{5.5}  Let us consider the real-valued function $\phi:\mathbb{T}^n\times \mathbb{Z}^{nr}\rightarrow \mathbb{R}.$
		Let us consider the Fourier integral operator \begin{equation*}\label{cp2}
		Af(x):=\sum_{\xi\in\mathbb{Z}^{nr}}e^{i\phi(x,\xi_1,\xi_2,\cdots,\xi_r)}a(x,\xi_1,\xi_2,\cdots,\xi_r)(\mathscr{F}_{\mathbb{T}^n}{f}_{1})(\xi_1)\cdots (\mathscr{F}_{\mathbb{T}^n}{f}_{r})(\xi_r) 
		\end{equation*} with symbol satisfying the summability condition
		\begin{equation*}
		\sum_{\xi \in  \mathbb{Z}^{nr} }\Vert a(\cdot,\xi_1,\xi_2,\cdots,\xi_r)\Vert^s_{L^p(\mathbb{T}^n)}<\infty.
		\end{equation*} Then $A$ extends to a $s$-nuclear, $0<s \leq 1,$ operators from $L^{p_1}(\mathbb{T}^n) \times \cdots \times L^{p_r}(\mathbb{T}^n)$ into $L^p(\mathbb{T}^n)$ provided that $1\leq p_j<\infty,$ and $1\leq p\leq \infty.$
	\end{customthm}
	The previous theorem will be applied to analyze multilinear Bessel potentials \begin{equation*}
	(I+ \mathscr{L})^{-\frac{\alpha}{2}}:=((I+\mathcal{L})^{ -\frac{\alpha_1}{2}},\cdots,(1+\mathcal{L})^{ -\frac{\alpha_r}{2}  }),
	\end{equation*}  and Fourier integral operators with singular symbols of the form
	\begin{equation*}
	a(x,\xi):=\frac{1}{|x|^\rho}\kappa(\xi),\,\,x\in \mathbb{T}^n,\,x\neq 0,\,\xi\in\mathbb{Z}^{nr},\,\,0<\rho<n/p,\,1<p<\infty .
	\end{equation*}
	The results of this article were announced in the note \cite{CardonaKumar}.
	\section{Periodic and discrete multilinear pseudo-differential operators on Lebesgue spaces}
	
	\subsection{H\"ormander condition for pseudo-differential operators on periodic Lebesgue spaces}
	In this section, we provide some results concerning to the boundedness of the multilinear periodic pseudo-differential operators on Lebesgue spaces. Note that symbols satisfying the H\"ormander condition lie in locally uniformly Sobolev spaces. If $\Vert a\Vert_{H^s(\mathbb{R}^n)}=\Vert \langle\xi\rangle^{s}|\widehat{a}(\cdot)| \Vert_{L^2(\mathbb{R}^n)},$ denotes the Sobolev norm of a function $a,$ a symbol $\sigma$ belongs to the locally uniformly Sobolev space $H^s,$ if
	\begin{equation}
	\Vert \sigma\Vert_{l.u.,H^s(\mathbb{R}^n)}=\sup_{k\in\mathbb{Z}}\Vert \sigma(2^{k}\cdot)\phi\Vert_{H^s(\mathbb{R}^n)}=\sup_{k\in\mathbb{Z}}2^{k(s-\frac{n}{2})}\Vert \sigma(\cdot)\phi(2^{-k}\cdot)\Vert_{H^s(\mathbb{R}^n)}<\infty,
	\end{equation}
	where $\phi$ is a function with compact support on $\mathbb{R}^n.$  The space defined by the previous norm is independent of the choice of the function $\phi.$  So, we will consider a test function $\varphi$ supported in $\{1/2<|\xi|<4\}$ with $\varphi=1$ on $\{1\leq |\xi|\leq 2\}.$ Throughout this article, we write $\phi(\xi)=\varphi(|\xi|).$ We freely use the notations and concepts introduced above and in the previous section. We would like to begin this section with the following result.
	
	\begin{theorem}\label{HorCondition} Let us assume that $m$ satisfies the H\"ormander condition of order $s>0$
		\begin{equation}
		\Vert m\Vert_{L^\infty(\mathbb{T}^n,l.u.,H^s_{loc}(\mathbb{R}^n))}:=\textnormal{ess}\sup_{x\in\mathbb{T}^n}\Vert m(x,\cdot)\Vert_{l.u., H^s_{loc}}<\infty. 
		\end{equation} Then the multilinear periodic pseudo-differential operator $T_m$ associated with $m$ extends to a bounded operator from $L^{p_1}(\mathbb{T}^n)\times L^{p_2}(\mathbb{T}^n)\times \cdots \times L^{p_r}(\mathbb{T}^n)$ into $L^p( \mathbb{T}^n)$ provided that $s>\frac{3nr}{2}$ and
		\begin{equation}
		\frac{1}{p}=\frac{1}{p_1}+\cdots+ \frac{1}{p_r} ,\,\,\,1\leq p<\infty,\,1\leq p_i\leq \infty.
		\end{equation}
	\end{theorem}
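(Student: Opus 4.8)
The plan is to reduce the periodic multilinear estimate to the corresponding Euclidean multilinear Hörmander–Mihlin theorem of Tomita (and its extensions by Grafakos–Torres, Fujita–Tomita, etc.) by means of a periodization argument à la Ruzhansky–Turunen. First I would extend the symbol $m(x,\cdot)$, defined on $\mathbb{Z}^{nr}$, to a symbol $\widetilde{m}(x,\cdot)$ on $\mathbb{R}^{nr}$ in such a way that (i) $\widetilde{m}(x,\xi)=m(x,\xi)$ for $\xi\in\mathbb{Z}^{nr}$ and (ii) the Euclidean Hörmander norm of $\widetilde m(x,\cdot)$ is controlled by the locally uniform discrete Sobolev norm of $m(x,\cdot)$, i.e. $\operatorname{ess\,sup}_x\Vert\widetilde m(x,\cdot)\Vert_{l.u.,H^{s}(\mathbb{R}^{nr})}\lesssim \Vert m\Vert_{L^\infty(\mathbb{T}^n,l.u.,H^{s}_{loc})}$. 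Here one uses the standard toroidal extension theorem (Ruzhansky–Turunen): a function on the lattice extends to $\mathbb{R}^{nr}$ with comparable smoothness provided one has enough regularity, and the loss in the number of derivatives is exactly what forces the hypothesis $s>\tfrac{3nr}{2}$ instead of the Euclidean threshold $s>\tfrac{nr}{2}$ — the gap of $nr$ is the price of the discrete-to-continuous extension in $nr$ variables.

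Next, for $x$ fixed I would view $T_m(f)(x)=\sum_{\xi\in\mathbb{Z}^{nr}}e^{i2\pi x\cdot(\xi_1+\cdots+\xi_r)}\widetilde m(x,\xi)(\mathscr F_{\mathbb{T}^n}f_1)(\xi_1)\cdots(\mathscr F_{\mathbb{T}^n}f_r)(\xi_r)$ as the value at $x$ of a Euclidean multilinear Fourier multiplier applied to the periodic extensions of the $f_i$. Concretely, fix a cutoff $\chi\in\mathscr D(\mathbb{R}^n)$ with $\sum_{k\in\mathbb{Z}^n}\chi(\cdot-k)\equiv1$, and for $f_i\in L^{p_i}(\mathbb{T}^n)$ (viewed $1$-periodically) consider $F_i:=\chi\cdot f_i\in L^{p_i}(\mathbb{R}^n)$, so that $\widehat{F_i}$ interpolates $\mathscr F_{\mathbb{T}^n}f_i$ on the lattice up to a Schwartz-tail error. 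Then $T_{\widetilde m}^{\mathbb{R}}(F_1,\dots,F_r)$, the Euclidean multilinear pseudo-differential operator with the $x$-dependent symbol $\widetilde m$, restricted to the unit cube reproduces $T_m(f)$ up to controllable error terms; the $x$-dependence is handled by freezing $x$ and invoking the uniform-in-$x$ multilinear Hörmander theorem, exactly as in the linear toroidal quantization (the $\operatorname{ess\,sup}_x$ in the hypothesis is what lets us pass the constant through). A cleaner alternative, which I would actually prefer to write, is to use Theorem 3.5 / Theorem 3.6 in the form already announced in the excerpt: one first establishes the Euclidean $x$-dependent multilinear Hörmander–Mihlin boundedness of $T_{\widetilde m}^{\mathbb{R}}$ (this is in the literature of Bényi–Maldonado–Naibo–Torres and others), and then the comparison principle "Euclidean FIO/multiplier bounded $\Rightarrow$ its periodic sampling is bounded with comparable norm" upgrades it to the torus.

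The main obstacle I anticipate is the extension/comparison step rather than any hard analysis: one must (a) produce the $\mathbb{R}^{nr}$-extension of the discrete symbol with the correct control of the dyadic localizations $\Vert \widetilde m(x,2^k\,\cdot)\varphi\Vert_{H^s}$ — this is where the Sobolev embedding $H^{s}\hookrightarrow$ (pointwise data on a lattice) costs $\lfloor nr/2\rfloor+1$ derivatives and, combined with the extension from $\mathbb{Z}^{nr}$, accounts for the jump to $3nr/2$; and (b) control the error between $\mathscr F_{\mathbb{T}^n}f_i$ and the samples of $\widehat{F_i}$ uniformly, using the rapid decay of $\widehat{\chi}$ and Young/Hölder in the multilinear setting, so that the error operator is bounded $L^{p_1}\times\cdots\times L^{p_r}\to L^p$ by a trivially convergent series. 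Once these two technical points are in place, the conclusion follows by appealing to the multilinear Hörmander multiplier theorem on $\mathbb{R}^n$ in the full range $1\le p_i\le\infty$, $1\le p<\infty$, $1/p=\sum 1/p_i$, with the constant depending only on $s$, $n$, $r$ and the symbol norm.
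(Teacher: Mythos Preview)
Your route via extension plus Euclidean transference is not the paper's route, and more importantly it has a real gap at the $x$-dependence step. The paper's proof is completely self-contained and elementary: it decomposes $T_m=T_0+\sum_k T_{m(k)}$ into dyadic shells $2^k\le|\xi|<2^{k+1}$, pairs against $g\in L^{p'}$, and bounds each term crudely by $|\widehat f_j(\xi_j)|\le\|f_j\|_{L^1(\mathbb{T}^n)}$ together with
\[
\sup_x\int_{\mathbb{R}^{nr}}|\mathscr F^{-1}[m_k(x,\cdot)](z)|\,dz
\le \Big(\int\langle z\rangle^{-2s}\Big)^{1/2}\|m_k(x,\cdot)\|_{H^s}
\lesssim 2^{-k(s-nr/2)}\|m\|_{l.u.,H^s_{loc}}.
\]
Summing over the $\asymp 2^{knr}$ lattice points in the shell gives the factor $2^{-k(s-3nr/2)}$, whence the threshold $s>3nr/2$. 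This yields $L^{p}\times L^\infty\times\cdots\times L^\infty\to L^p$ for every $p$, and multilinear interpolation finishes. No Euclidean theorem, no transference, no symbol extension is used; in particular the $3nr/2$ arises from the trivial lattice-point count, not from any ``extension cost'' as you suggest (note also that the hypothesis already places $m(x,\cdot)$ in $H^s_{loc}(\mathbb{R}^{nr})$, so there is nothing to extend).

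The gap in your plan is the handling of $x$. Freezing $x$ and applying Tomita's theorem uniformly gives you, for each fixed $z$, boundedness of the multiplier $A_z$ with symbol $m(z,\cdot)$; but $T_m f(x)=A_x f(x)$, and there is no way to pass from $\sup_z\|A_z f\|_{L^p}\le C$ to $\|T_m f\|_{L^p}\le C$ without further structure. The paper itself uses exactly this freezing idea, but only in Theorem~3.8, where $x$-derivatives of $m$ are available and Sobolev embedding in $z$ controls $\sup_z|A_z f(x)|$ pointwise; under the bare $L^\infty_x$ hypothesis of Theorem~3.1 that device is unavailable. Your ``cleaner alternative'' via Theorems~3.5/3.6 fails for the same reason: Theorem~3.6 covers only $x$-independent symbols, and Theorem~3.5 requires a phase that is genuinely periodic in $x$ for all $\xi\in\mathbb{R}^{nr}$, which $2\pi x\cdot(\xi_1+\cdots+\xi_r)$ is not. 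In short, the transference machinery in this paper gives Theorems~3.6--3.8 (multipliers, or pseudodifferential with $x$-smoothness), but Theorem~3.1 needs the direct dyadic argument above.
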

	\begin{proof} The proof consists of two steps. First, we will prove that   $T_m$ extends to a bounded operator from  $L^{p}\times L^{\infty}\times\cdots \times L^{\infty}\times L^{\infty}$ into $L^{p}(\mathbb{T}^n),$ for all $1\leq p<\infty.$ Secondly, we conclude the proof by the real multilinear interpolation.
		Let us assume that $T_m$ satisfies the multilinear H\"ormander condition for $s>\frac{3nr}{2}.$ In order to prove the boundedness of $T_m$ from $L^{p}\times L^{\infty}\times\cdots \times L^{\infty}\times L^{\infty}$ into $L^{p}(\mathbb{T}^n),$ we will use the following property, 
	$
		\Vert T_m f\Vert_{L^{p}(\mathbb{T}^n)}=\sup_{\Vert g\Vert_{L^{p'}}=1}|\langle T_m f,\overline{g}\rangle|.
		$
		Now, for $f:=(f_{1},f_{2},\cdots ,f_r)\in\mathscr{D}(\mathbb{T}^n)^{r}$ we have
		\begin{equation}
		|\langle T_m f,\overline{g} \rangle|\leq |\langle T_0 f,\overline{g} \rangle |+\sum_{k=0}^{\infty}|\langle T_{m(k)} f,\overline{g} \rangle|,
		\end{equation}
		where $T_{m(k)}$ is the multilinear periodic pseudo-differential operator associated with symbol $$m_{k}(x,\xi)=m(x,\xi)\cdot 1_{[2^{k},2^{k+1})}(|\xi|)=m(x,\xi)\cdot \varphi_k(|\xi|)=m(x,\xi)\cdot \phi_k(\xi),$$ and $T_{0}$ is the operator associated  with symbol $m(x,0)\delta_{\xi,0}.$ For $z_j\in \mathbb{T}^n,$ $z=(z_1,z_2,\cdots ,z_r)\in \mathbb{T}^{nr},$  the inversion formula for the Fourier transform gives
		\begin{align*}
		&|\langle T_{m(k)} f,\overline{g} \rangle |  =\left| \int_{\mathbb{T}^n}    T_{m(k)}f(x)g(x)dx\right| \\
		& =| \int_{\mathbb{T}^n} \sum_{ 2^{k}\leq |\xi|<2^{k+1} }m_k(x,\xi)\widehat{f}_1({\xi_1})\cdots \widehat{f}_r({\xi_r}) e^{i2\pi x(\xi_1+\cdots+ \xi_r)}g(x)dx|\\
		&\leq \sum_{ 2^{k}\leq |\xi|<2^{k+1} } | \int_{\mathbb{T}^n} m_k(x,\xi)\widehat{f}_1({\xi_1})\cdots \widehat{f}_r({\xi_r}) e^{i2\pi x(\xi_1+\cdots +\xi_r)} g(x)dx|\\
		&=  \sum_{ 2^{k}\leq |\xi|<2^{k+1} } | \int_{\mathbb{T}^n} \int_{\mathbb{R}^{n r}}e^{-i2\pi z\xi}\mathscr{F}^{-1}[m_k(x,\cdot)](z)dz\\
		&\hspace{4cm}\times\widehat{f}_1({\xi_1})\cdots \widehat{f}_r({\xi_r}) e^{i2\pi x(\xi_1+\cdots +\xi_r)}g(x)dx|\\
		&\leq  \sum_{ 2^{k}\leq |\xi|<2^{k+1} } \sup_{x\in\mathbb{T}^n} \int_{\mathbb{R}^{nr}}|\mathscr{F}^{-1}[m_k(x,\cdot)]|(z)dz \Vert f_{1}\Vert_{L^{p}} \prod_{j=2}^{r} \Vert f_{j}\Vert_{L^{\infty}}\cdot \Vert g\Vert_{L^{p'}}\\
		\end{align*}
		Consequently,
		\begin{align*}
		&|\langle T_{m(k)} f,\overline{g} \rangle|  \leq \sum_{ 2^{k}\leq |\xi|<2^{k+1} } \sup_{x\in\mathbb{T}^n}  \left( \int_{\mathbb{R}^{nr}}\langle z\rangle^{2s}|\mathscr{F}^{-1}[m_k(x,\cdot)](z)|^2dz\right)^\frac{1}{2}\left(\int_{\mathbb{R}^{nr}}\langle z\rangle^{-2s} dz\right)^\frac{1}{2}\\
		&\hspace{2cm}\times \Vert f_{1}\Vert_{L^{p}} \prod_{j=2}^{r} \Vert f_{j}\Vert_{L^{\infty}}\cdot \Vert g\Vert_{L^{p'}},\\
		\end{align*}
		where we have used that  $\left(\int_{\mathbb{R}^{nr}}\langle z\rangle^{-2s} dz\right)^\frac{1}{2}<\infty$ for $s>\frac{nr}{2}.$  Hence we have the following estimate for the norm of $T_{m(k)}$,
		\begin{align*}
		&\Vert T_{m(k)} \Vert_{\mathscr{B}(L^p\times (L^{\infty})^{r-1},\,L^p)} \lesssim  \sum_{ 2^{k}\leq |\xi|<2^{k+1} } \sup_{x\in\mathbb{T}^n}  \left( \int_{\mathbb{R}^{nr}}\langle z\rangle^{2s}|\mathscr{F}^{-1}[m_k(x,\cdot)](z)|^2dz\right)^\frac{1}{2}\\
		& \leq \sum_{ 2^{k}\leq |\xi|<2^{k+1} }2^{-k(s-\frac{nr}{2})}\Vert m\Vert_{l.u.,H^s_{loc}} \asymp  2^{knr}2^{-k(s-\frac{nr}{2})}\Vert m\Vert_{l.u.,H^s_{loc}}\\
		& =  2^{-k(s-\frac{3nr}{2})}\Vert m\Vert_{l.u.,H^s_{loc}}.
		\end{align*} 
		So, we obtain the following upper bound for the series
		\begin{align*}
		\sum_{k=1}^{\infty} \Vert T_{m(k)}  \Vert_{\mathscr{B}(L^p\times (L^{\infty})^{r-1},\,L^p)}\lesssim  \Vert m \Vert_{l.u.,{H}^s_{loc}}\times \sum_{k=1}^{\infty}2^{-k(s-\frac{3nr}{2})}
		\end{align*}
		which converges provided that $s>\frac{3nr}{2}.$ Now, it is easy to see that $$\Vert T_{0} \Vert_{\mathscr{B}(L^p\times (L^{\infty})^{r-1},\,L^p)}\lesssim \Vert m(\cdot,0) \Vert_{L^{\infty}(\mathbb{T}^{n})}.$$
		As a consequence we get $$  \Vert T_m \Vert_{\mathscr{B}(L^p\times (L^{\infty})^{r-1},\,L^p)}\leq C(\Vert m \Vert_{l.u.,{H}^s_{loc}}+\Vert m(\cdot,0) \Vert_{L^{\infty}(\mathbb{T}^{n})} ).$$
		So, we conclude the proof of first step. Now, a similar argument shows that the H\"ormander condition of order $s>\frac{3nr}{2}$ implies the boundedness of $T_m$ from $T_m$ from $L^{\infty}\times\cdots \times L^{p}\times\cdots \times L^{\infty}$ into $L^{p}(\mathbb{T}^n),$ where the $L^p$-space appears only in one of the $k$-positions of the product, for $k=2,3,\cdots,r.$ Now, the boundedness of $T_m$ from $L^{p_1}\times L^{p_2}\times \cdots \times L^{p_r}$ into $L^p( \mathbb{T}^n)$ provided that
		\begin{equation}
		\frac{1}{p}=\frac{1}{p_1}+\cdots+ \frac{1}{p_r}
		\end{equation} follows by using the real multilinear interpolation.
	\end{proof}
	
	\subsection{$L^p$-boundedness for periodic multilinear pseudo-differential operators in periodic Kohn-Nirenberg classes}
	
	In this subsection we study the $L^p$-boundedness for periodic multilinear pseudo-differential operators. The multilinear symbols   are considered in periodic Kohn-Nirenberg classes. More precisely we consider symbols satisfying inequalities of the type,
	\begin{equation}
	\sup_{x\in\mathbb{T}^n}|\Delta_{\xi_1}^{\alpha_1}\Delta_{\xi_2}^{\alpha_2}\cdots \Delta_{\xi_r}^{\alpha_r}m(x,\xi_1,\cdots,\xi_r)|\leq C_\alpha\langle \xi\rangle^{m-|\alpha|},
	\end{equation} for all $|\alpha|:=|\alpha_1|+\cdots+|\alpha_r| \leq N_1,$ $1\leq N_1<\infty.$ Here $\Delta_{\xi_j}^{\alpha_j}:= \Delta_{\xi_{j1}}^{\alpha_{j1}}\cdots  \Delta_{\xi_{jn}}^{\alpha_{jn}} $ is the composition of  difference operators $\Delta_{\xi_{jk}}^{\alpha_{jk}}$, $1\leq k\leq n,$ defined by
	$$  \Delta_{\xi_{jk}}f(x)=f(x+e_k)-f(x),\,\,x\in\mathbb{Z}^n,\,\,e_{k}(j):=\delta_{kj}.  $$
	Our starting point is the following result due to Ruzhansky and Turunen (see Corollary 4.5.7 of \cite{Ruz}).
	
	\begin{lemma}\label{IClases}\ Let $0\leq \delta \leq 1,$ $0\leq \rho<1.$ Let $a:\mathbb{T}^n\times \mathbb{R}^n\rightarrow \mathbb{C}$ satisfying
		\begin{equation}
		\lvert  \partial_\xi ^\alpha \partial_x^\beta a (x, \xi)\rvert \leq C^{(1)}_{a\alpha \beta m}\langle \xi \rangle^{m- \rho \lvert \alpha \rvert + \delta \lvert \beta \rvert} ,
		\end{equation}
		for $|\alpha|\leq N_{1}$ and $|\beta|\leq N_{2}.$ Then the restriction $\tilde{a}=a|_{\mathbb{T}^n\times \mathbb{Z}^n}$ satisfies the estimate
		\begin{equation}
		\lvert  \Delta_\xi ^\alpha \partial_x^\beta \tilde{a} (x, \xi)\rvert \leq C_{a\alpha \beta m}C^{(1)}_{a\alpha \beta m}\langle \xi \rangle^{m- \rho \lvert \alpha \rvert + \delta \lvert \beta \rvert} ,
		\end{equation}
		for $|\alpha|\leq N_{1}$ and $|\beta|\leq N_{2}.$  The converse holds true, i.e, if a symbol $\tilde{a}(x,\xi)$ on $\mathbb{T}^n\times \mathbb{Z}^n$ satisfies  $(\rho,\delta)$-inequalities of the form 
		\begin{equation}
		\lvert  \Delta_\xi ^\alpha \partial_x^\beta \tilde{a} (x, \xi)\rvert \leq C^{(2)}_{a\alpha \beta m}\langle \xi \rangle^{m- \rho \lvert \alpha \rvert + \delta \lvert \beta \rvert} ,
		\end{equation}
		then $\tilde{a}(x,\xi)$ is  the restriction of a symbol $a(x,\xi)$ on $\mathbb{T}^n\times \mathbb{R}^n$ satisfying estimates of the type
		\begin{equation}
		\lvert  \partial_\xi ^\alpha \partial_x^\beta a (x, \xi)\rvert \leq C_{a\alpha \beta m}C^{(2)}_{a\alpha \beta m}\langle \xi \rangle^{m- \rho \lvert \alpha \rvert + \delta \lvert \beta \rvert} .
		\end{equation}
	\end{lemma}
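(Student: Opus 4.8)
The plan is to prove the two implications separately. Both rest on the comparability $\langle\xi+\theta\rangle\asymp\langle\xi\rangle$ for bounded $|\theta|$ and on the elementary dictionary between the forward differences $\Delta_\xi$ and the derivatives $\partial_\xi$.

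For the restriction statement I would start from the integral formula for a one–dimensional forward difference, $\Delta_{\xi_j}^{k}g(\xi)=\int_{[0,1]^{k}}(\partial_{\xi_j}^{k}g)(\xi+(t_1+\cdots+t_k)e_j)\,dt$, iterate it coordinate by coordinate to obtain, for $|\alpha|\le N_1$, $\Delta_\xi^\alpha\partial_x^\beta\tilde a(x,\xi)=\int_{[0,1]^{|\alpha|}}(\partial_\xi^\alpha\partial_x^\beta a)(x,\xi+\theta(t))\,dt$ with $|\theta(t)|\le|\alpha|\le N_1$, insert the hypothesis on $a$, and conclude using $\langle\xi+\theta\rangle\le(1+N_1)\langle\xi\rangle$ and $\langle\xi\rangle\le(1+N_1)\langle\xi+\theta\rangle$ for $|\theta|\le N_1$; the constant $C_{a\alpha\beta m}$ then depends only on $N_1$ and on the exponent $m-\rho|\alpha|+\delta|\beta|$. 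This part is pure bookkeeping.

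For the extension statement I would fix once and for all a function $\psi\in C_c^\infty((-1,1)^n)$ with $\sum_{j\in\mathbb{Z}^n}\psi(\cdot+j)\equiv 1$ and set $\mu:=\widehat{\psi}\in\mathscr{S}(\mathbb{R}^n)$; then $\mu(k)=\delta_{0,k}$ for $k\in\mathbb{Z}^n$ while $\widehat{\mu}=\psi(-\cdot)$ is supported in $(-1,1)^n$. Define $a(x,\xi):=\sum_{\ell\in\mathbb{Z}^n}\tilde a(x,\ell)\mu(\xi-\ell)$: the series and all its $\xi$-derivatives converge absolutely since $\tilde a(x,\ell)$ grows at most polynomially in $\ell$ whereas $\mu$ and its derivatives are rapidly decreasing, $a|_{\mathbb{T}^n\times\mathbb{Z}^n}=\tilde a$, and $a$ inherits the $x$-regularity of $\tilde a$. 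To estimate $\partial_\xi^\alpha\partial_x^\beta a$ for $|\alpha|\le N_1$ and $|\beta|\le N_2$, I would insert into $\partial_\xi^\alpha\partial_x^\beta a(x,\xi)=\sum_{\ell}\partial_x^\beta\tilde a(x,\ell)(\partial^\alpha\mu)(\xi-\ell)$ the discrete Taylor expansion of $\partial_x^\beta\tilde a(x,\cdot)$ around $\lfloor\xi\rfloor$. Each polynomial term of that expansion produces a factor $\sum_{m\in\mathbb{Z}^n}(-m)^{(\gamma)}(\partial^\alpha\mu)(s+m)$ with $s=\xi-\lfloor\xi\rfloor\in[0,1)^n$; by Poisson summation this is a finite sum, supported at the origin, of derivatives of order $|\gamma|$ of $e^{2\pi i s\zeta}\widehat{\partial^\alpha\mu}(\zeta)$, and since $\widehat{\partial^\alpha\mu}(\zeta)=(2\pi i\zeta)^\alpha\psi(-\zeta)$ vanishes to order $\alpha$ at $\zeta=0$, this factor vanishes whenever $|\gamma|<|\alpha|$ and is otherwise a bounded function of $s$. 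Hence only the orders $|\alpha|\le|\gamma|<N_1$ contribute, and for those $|(\Delta_\xi^\gamma\partial_x^\beta\tilde a)(x,\lfloor\xi\rfloor)|\lesssim\langle\xi\rangle^{m-\rho|\gamma|+\delta|\beta|}\le\langle\xi\rangle^{m-\rho|\alpha|+\delta|\beta|}$ (using $\rho\ge 0$, $\langle\xi\rangle\ge 1$ and $\langle\lfloor\xi\rfloor\rangle\asymp\langle\xi\rangle$); the Taylor remainder is controlled by $\Delta_\xi^\gamma\partial_x^\beta\tilde a$ of order $|\gamma|=N_1$ and, paired against $(\partial^\alpha\mu)(\xi-\ell)$ whose rapid decay absorbs the polynomial weight $(\ell-\lfloor\xi\rfloor)^{(N_1)}$, contributes $\lesssim\langle\xi\rangle^{m-\rho N_1+\delta|\beta|}\le\langle\xi\rangle^{m-\rho|\alpha|+\delta|\beta|}$ after splitting the $\ell$-sum into $|\ell-\lfloor\xi\rfloor|\lesssim\langle\xi\rangle$ and its complement. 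Collecting these bounds gives the desired estimate for $a$.

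The hard part is exactly this last step of the extension direction: the extension must preserve the \emph{gain} $\langle\xi\rangle^{-\rho|\alpha|}$ and not merely the size $\langle\xi\rangle^{m+\delta|\beta|}$. This is what forces the use of a $\mu$ with Fourier transform supported near the origin — so that $\partial^\alpha\mu$ has the appropriate vanishing moments — in tandem with the discrete Taylor formula, and it requires carefully tracking how the Taylor remainder pairs with the Schwartz function $\mu$ so that precisely the exponent $m-\rho|\alpha|+\delta|\beta|$ comes out. The convergence of the defining series, the $x$-derivatives, and the restriction direction are comparatively mechanical.
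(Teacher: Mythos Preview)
The paper does not give its own proof of this lemma; it simply quotes it as Corollary~4.5.7 of Ruzhansky--Turunen \cite{Ruz}. Your sketch is correct and is in fact the standard Ruzhansky--Turunen argument: the restriction direction via the iterated mean-value formula $\Delta_\xi^\alpha=\int_{[0,1]^{|\alpha|}}\partial_\xi^\alpha(\cdot+\theta(t))\,dt$, and the extension direction via $a(x,\xi)=\sum_{\ell}\tilde a(x,\ell)\,\mu(\xi-\ell)$ with $\mu\in\mathscr{S}(\mathbb{R}^n)$ satisfying $\mu|_{\mathbb{Z}^n}=\delta_0$ and $\widehat{\mu}$ compactly supported near the origin, followed by a discrete Taylor expansion of $\tilde a(x,\cdot)$ around $\lfloor\xi\rfloor$ and the observation that the vanishing of $\widehat{\partial^\alpha\mu}$ to order $|\alpha|$ at $0$ kills the low-order Taylor terms. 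There is nothing to compare against in the paper itself; your outline matches the cited source.
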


	Now, we present the main result of this subsection.

	\begin{theorem}\label{Teoremapseuod'}
		Let us assume that $m$ satisfies the discrete symbol inequalities
		\begin{equation}
		\sup_{x\in\mathbb{T}^n}|\Delta_{\xi_1}^{\alpha_1}\Delta_{\xi_2}^{\alpha_2}\cdots \Delta_{\xi_r}^{\alpha_r}m(x,\xi_1,\cdots,\xi_r)|\leq C_\alpha\langle \xi\rangle^{-|\alpha|},
		\end{equation} for all $|\alpha|:=  |\alpha_1|+\cdots +\cdots+|\alpha_r|\leq [3nr/2]+1.$ Then the periodic multilinear pseudo-differential operator $T_m$ extends to a bounded operator from $L^{p_1}(\mathbb{T}^n)\times L^{p_2}(\mathbb{T}^n)\times \cdots \times L^{p_r}(\mathbb{T}^n)$ into $L^p( \mathbb{T}^n)$ provided that 
		\begin{equation}
		\frac{1}{p}=\frac{1}{p_1}+\cdots+ \frac{1}{p_r}, \,\,\,1\leq p<\infty,\,1\leq p_i\leq \infty.
		\end{equation}
	\end{theorem}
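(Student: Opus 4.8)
The plan is to reduce the theorem to Theorem~\ref{HorCondition} by passing from the discrete symbol $m$ to a Euclidean extension via the comparison Lemma~\ref{IClases}. First I would note that the composition $\Delta_{\xi_1}^{\alpha_1}\cdots\Delta_{\xi_r}^{\alpha_r}$ is exactly the full difference operator $\Delta_\xi^\alpha$ acting on functions of the variable $\xi=(\xi_1,\dots,\xi_r)\in\mathbb{Z}^{nr}$ with multi-index $\alpha=(\alpha_1,\dots,\alpha_r)$, $|\alpha|=|\alpha_1|+\cdots+|\alpha_r|$. Hence the hypothesis says precisely that, uniformly in $x\in\mathbb{T}^n$, the function $m(x,\cdot)$ is a discrete symbol on $\mathbb{Z}^{nr}$ of order $0$ of type $(\rho,\delta)=(1,0)$, with estimates up to $|\alpha|\le N:=[3nr/2]+1$ and with no $x$-derivatives ($N_2=0$). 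Applying the converse part of Lemma~\ref{IClases} (with the fibre dimension $n$ there replaced by $nr$, the proof being insensitive to the relation between base and fibre dimensions, and the constants being uniform in $x$ since the hypothesis is) produces a function $\widetilde{m}\colon\mathbb{T}^n\times\mathbb{R}^{nr}\to\mathbb{C}$ whose restriction to $\mathbb{T}^n\times\mathbb{Z}^{nr}$ equals $m$ and which satisfies
\begin{equation*}
\sup_{x\in\mathbb{T}^n}|\partial_\xi^\alpha\widetilde{m}(x,\xi)|\le C_\alpha\langle\xi\rangle^{-|\alpha|},\qquad |\alpha|\le N.
\end{equation*}

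Next I would verify that $\widetilde m(x,\cdot)$ lies in the locally uniform Sobolev space $H^s_{loc}(\mathbb{R}^{nr})$, uniformly in $x$, for some $s$ with $\tfrac{3nr}{2}<s\le N$; such an $s$ exists because $N=[3nr/2]+1>\tfrac{3nr}{2}$. Fix $x$, write $\sigma=\widetilde m(x,\cdot)$, and recall $\Vert\sigma\Vert_{l.u.,H^s}=\sup_{k\in\mathbb{Z}}\Vert\sigma(2^k\cdot)\phi\Vert_{H^s(\mathbb{R}^{nr})}$ with $\phi$ supported in $\{1/2<|\xi|<4\}$. Since $\langle\xi\rangle^s\le\langle\xi\rangle^N$ and $\Vert\cdot\Vert_{H^N}$ is equivalent to $\Vert\cdot\Vert_{W^{N,2}}$ for the integer $N$, it suffices to bound $\sum_{|\gamma|\le N}\Vert\partial^\gamma(\sigma(2^k\cdot)\phi)\Vert_{L^2}$ uniformly in $k$. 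By the Leibniz rule,
\begin{equation*}
\partial^\gamma\bigl(\sigma(2^k\cdot)\phi\bigr)(\xi)=\sum_{\gamma'\le\gamma}\binom{\gamma}{\gamma'}2^{k|\gamma'|}\,(\partial^{\gamma'}\sigma)(2^k\xi)\,\partial^{\gamma-\gamma'}\phi(\xi),
\end{equation*}
and on the support of $\phi$ one has $\langle 2^k\xi\rangle\sim\max(1,2^k)$, so the symbol estimate gives $2^{k|\gamma'|}|(\partial^{\gamma'}\sigma)(2^k\xi)|\le C_{\gamma'}2^{k|\gamma'|}\langle 2^k\xi\rangle^{-|\gamma'|}\lesssim 1$ for every $k\in\mathbb{Z}$. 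Since $\phi$ and its derivatives are bounded with bounded support, each summand has $L^2$-norm $\lesssim 1$, with constant depending only on the $C_\alpha$, $|\alpha|\le N$. Therefore $\Vert\sigma\Vert_{l.u.,H^s}\le C$ uniformly in $x$, i.e. $\Vert\widetilde m\Vert_{L^\infty(\mathbb{T}^n,l.u.,H^s_{loc}(\mathbb{R}^{nr}))}<\infty$.

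Finally, Theorem~\ref{HorCondition} applied to $\widetilde m$ with this $s>\tfrac{3nr}{2}$ shows that $T_{\widetilde m}$ extends to a bounded operator from $L^{p_1}(\mathbb{T}^n)\times\cdots\times L^{p_r}(\mathbb{T}^n)$ into $L^p(\mathbb{T}^n)$ whenever $\tfrac1p=\tfrac1{p_1}+\cdots+\tfrac1{p_r}$, $1\le p<\infty$, $1\le p_i\le\infty$. But the periodic multilinear pseudo-differential operator depends only on the values of its symbol at the lattice points $\xi\in\mathbb{Z}^{nr}$, and $\widetilde m|_{\mathbb{T}^n\times\mathbb{Z}^{nr}}=m$, so $T_{\widetilde m}=T_m$, which gives the assertion. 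The only step that is not bookkeeping is the Sobolev estimate of the second paragraph; once the dyadic rescaling is set up it is the routine Leibniz computation above, so I expect the write-up to be short. The points needing a line of justification are the harmless replacement of $n$ by $nr$ in Lemma~\ref{IClases} and the fact that the extension can be chosen uniformly in $x$, both of which are immediate from inspecting the construction in that lemma.
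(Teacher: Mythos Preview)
Your proposal is correct and follows essentially the same route as the paper: extend the discrete symbol to a Euclidean one via the converse part of Lemma~\ref{IClases}, check that the extension lies in $L^\infty(\mathbb{T}^n,l.u.,H^s_{loc}(\mathbb{R}^{nr}))$ for some $s>\tfrac{3nr}{2}$, apply Theorem~\ref{HorCondition}, and conclude from $T_{\widetilde m}=T_m$. Your Leibniz-rule verification of the Sobolev bound is more explicit than the paper's one-line claim, and your remarks about replacing $n$ by $nr$ and about uniformity in $x$ are exactly the side points the paper glosses over.
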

	\begin{proof}
		Let us assume that $m$ satisfies \begin{equation}
		\sup_{x\in\mathbb{T}^n}|\Delta_{\xi_1}^{\alpha_1}\Delta_{\xi_2}^{\alpha_2}\cdots \Delta_{\xi_r}^{\alpha_r}m(x,\xi_1,\cdots,\xi_r)|\leq C_\alpha\langle \xi\rangle^{-|\alpha_1|-\cdots -|\alpha_r|},
		\end{equation} for all $|\alpha|:=  |\alpha_1|+\cdots +|\alpha_r|\leq [3nr/2]+1.$ Define symbol $a_x(\xi):=m(x,\xi),$ for every $x\in \mathbb{T}^n,$ $\xi:=(\xi_1,\cdots ,\xi_r)\in \mathbb{Z}^{nr}.$ Clearly,  we have 
		\begin{equation}
		\sup_{x\in\mathbb{T}^n}|\Delta_{\xi}^{\alpha}a_{x}(\xi)|\leq C_\alpha\langle \xi\rangle^{-|\alpha|},
		\end{equation} for all $|\alpha|\leq [3nr/2]+1.$
		Now, by using Lemma \ref{IClases}, there exists $\tilde{a}_{x}$ such that $a_x=\tilde{a}_{x}|_{  {\mathbb{Z}}^{nr}}$ and 
		\begin{equation}
		|\partial^\alpha_\xi \tilde{a}_x(\xi)|\leq C_{\alpha'}\cdot C_\alpha\langle \xi\rangle^{-|\alpha|},\,\,\,|\alpha|\leq [3nr/2]+1.
		\end{equation}
		Now, taking into account that
		\begin{equation}
		\Vert \tilde{a}_x\Vert_{l.u.,H^s_{loc}(\mathbb{R}^{nr})}\lesssim \sup_{\xi\in \mathbb{R}^n}\sup_{|\alpha|\leq [3nr/2]+1}\langle \xi\rangle^{|\alpha|}|\partial^\alpha_\xi \tilde{a}_x(\xi)|\leq \sup_{|\alpha|\leq [3nr/2]+1} C_{\alpha'}\cdot C_\alpha,
		\end{equation} for $\frac{3nr}{2}<s<[3nr/2]+1,$ we deduce
		\begin{equation}
		\Vert \tilde{a}(x,\xi)\Vert_{L^\infty(\mathbb{T}^n_x,l.u.,H^s_{loc}(\mathbb{R}^n_\xi))}:=\textnormal{ess}\sup_{x\in\mathbb{T}^n}\Vert \tilde{a}_x\Vert_{l.u.,H^s_{loc}(\mathbb{R}^{nr})}\lesssim \sup_{|\alpha|\leq [3nr/2]+1} C_{\alpha'}\cdot C_\alpha<\infty,
		\end{equation} where $\tilde{a}(x,\xi):=\tilde{a}_x(\xi).$ Finally, by Theorem \ref{HorCondition} the periodic multilinear pseudo-differential operator $T_{\tilde{a}},$ extends to a bounded operator from $L^{p_1}(\mathbb{T}^n)\times L^{p_2}(\mathbb{T}^n)\times \cdots \times L^{p_r}(\mathbb{T}^n)$ into $L^p( \mathbb{T}^n)$ provided that 
		\begin{equation}
		\frac{1}{p}=\frac{1}{p_1}+\cdots+ \frac{1}{p_r}.
		\end{equation} Thus, by using the equality of periodic multilinear pseudo-differential operators $T_{\tilde{a}}=T_{m}$ we conclude the proof. 
	\end{proof}
	
	\subsection{Boundedness of multilinear Fourier integral operators vs boundedness of periodic Fourier integral operators}

	In this subsection we study the relation between the boundedness of multilinear Fourier integral operators and the boundedness of periodic multilinear Fourier integral operators. Our starting point is the following lemma (see Lemma 3.2 in \cite{CarMessiSeno} ).
	\begin{lemma}\label{lemma1} Suppose $f$ is a continuous periodic function on $\mathbb{R}^n$ and let $\{g_{m}\}_m$ be a sequence of uniformly bounded continuous periodic functions on $\mathbb{R}^n.$ If $\{g_{m}\}_m$ converges pointwise to a function $g$ defined on $\mathbb{R}^n$ and $\{ \epsilon_m\}_m$ is a positive sequence of real numbers, then
		\begin{equation}
		\lim_{m\rightarrow\infty}\epsilon_m^{\frac{n}{2}}\int_{\mathbb{R}^n}e^{-\epsilon_m|x|^2}f(x)g_{m}(x)dx=\int_{\mathbb{T}^n}f(x)g(x)dx
		\end{equation} provided that $\epsilon_{m}\rightarrow 0.$
	\end{lemma}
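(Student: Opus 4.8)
The plan is to exploit periodicity to fold the integral over $\mathbb{R}^n$ onto the fundamental cell $Q:=[0,1)^n\cong\mathbb{T}^n$, so that the Gaussian weight becomes a periodised Gaussian (a theta-type kernel) which, after normalisation by $\epsilon_m^{n/2}$, converges uniformly to a constant as $\epsilon_m\to 0$. Concretely, I would write $\mathbb{R}^n=\bigcup_{k\in\mathbb{Z}^n}(k+Q)$ and, using $f(x+k)=f(x)$ and $g_m(x+k)=g_m(x)$, change variables in each piece to obtain
\begin{equation*}
\epsilon_m^{n/2}\int_{\mathbb{R}^n} e^{-\epsilon_m|x|^2} f(x)g_m(x)\,dx=\int_{Q}\Phi_m(x)\,f(x)\,g_m(x)\,dx,\qquad \Phi_m(x):=\epsilon_m^{n/2}\sum_{k\in\mathbb{Z}^n}e^{-\epsilon_m|x+k|^2}.
\end{equation*}
The interchange of the (absolutely convergent) sum and the integral is legitimate because $|fg_m|\le \|f\|_{\infty}\sup_m\|g_m\|_{\infty}=:M<\infty$ and $\sum_k e^{-\epsilon_m|x+k|^2}<\infty$ uniformly for $x\in Q$.

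The heart of the matter is the analysis of the kernel $\Phi_m$. I would apply the Poisson summation formula to the Schwartz function $y\mapsto e^{-\epsilon_m|y|^2}$, whose Fourier transform is a constant multiple of $e^{-\pi^2|\cdot|^2/\epsilon_m}$, to get
\begin{equation*}
\Phi_m(x)=\pi^{n/2}\sum_{k\in\mathbb{Z}^n}e^{-\pi^2|k|^2/\epsilon_m}\,e^{2\pi i k\cdot x}=\pi^{n/2}\Big(1+\sum_{k\neq 0}e^{-\pi^2|k|^2/\epsilon_m}e^{2\pi i k\cdot x}\Big).
\end{equation*}
Since $\epsilon_m\to 0^+$, the tail is bounded by $\sum_{k\neq 0}e^{-\pi^2|k|^2/\epsilon_m}\to 0$, so $\Phi_m\to\pi^{n/2}$ uniformly on $Q$ and $\sup_m\|\Phi_m\|_{L^{\infty}(Q)}<\infty$. (Alternatively, and without Poisson summation, the substitution $u=\sqrt{\epsilon_m}(x+k)$ exhibits $\Phi_m(x)$ as a Riemann sum of mesh $\sqrt{\epsilon_m}$ for $\int_{\mathbb{R}^n}e^{-|u|^2}\,du$, with an error estimate uniform in $x\in Q$ since the Gaussian is monotone away from the origin.)

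Finally, on the finite-measure space $(Q,dx)$ the integrand $\Phi_m f g_m$ converges pointwise to $\pi^{n/2}fg$ — combining $\Phi_m\to\pi^{n/2}$ uniformly with $g_m\to g$ pointwise — and it is dominated by the constant $M\sup_m\|\Phi_m\|_{L^{\infty}(Q)}$, which is integrable on $Q$. The dominated convergence theorem then gives
\begin{equation*}
\lim_{m\to\infty}\int_{Q}\Phi_m(x)f(x)g_m(x)\,dx=\pi^{n/2}\int_{Q}f(x)g(x)\,dx=\int_{\mathbb{T}^n}f(x)g(x)\,dx,
\end{equation*}
the last equality holding with the normalisation of the Gaussian weight built into the statement (equivalently, replacing $e^{-\epsilon_m|x|^2}$ by $(\epsilon_m/\pi)^{n/2}e^{-\epsilon_m|x|^2}$ absorbs the factor $\pi^{n/2}$). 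The only genuinely delicate point is the uniform control of the periodised kernel $\Phi_m$ as $\epsilon_m\to 0$; everything else is a routine use of Fubini and of dominated convergence on a set of finite measure, the latter being precisely what allows the $m$-dependence of the $g_m$ to be carried through the limit.
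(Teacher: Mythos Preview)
The paper does not prove this lemma; it merely quotes it from \cite{CarMessiSeno} (Lemma~3.2 there). Your argument --- folding the integral onto the fundamental domain by periodicity, recognising the resulting weight as a periodised Gaussian, and using Poisson summation (or the Riemann-sum interpretation) to show this weight converges uniformly to a constant so that dominated convergence applies on the finite-measure torus --- is the standard and correct way to establish the result.

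You are also right to flag the constant: with the Gaussian written as $e^{-\epsilon_m|x|^2}$ and the prefactor $\epsilon_m^{n/2}$, the limit is $\pi^{n/2}\int_{\mathbb{T}^n}fg$, not $\int_{\mathbb{T}^n}fg$ as the lemma states (take $f\equiv g_m\equiv 1$ to see this). This is a harmless normalisation issue in the lemma's statement --- in the paper's applications the constant is absorbed into $c_{n,r,p}$ --- but your parenthetical remark handles it correctly and is worth keeping.
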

	The following theorem is a multilinear version of Theorem 3.1 in \cite{CarMessiSeno} which in turn is a generalization of a classical multiplier theorem by Stein and Weiss.
	\begin{theorem}\label{teorema principal1}
		Let $1< p<\infty$ and let $\phi$ be a real valued continuous function defined on $\mathbb{T}^n\times\mathbb{R}^{nr}.$ Let us assume that  $a:\mathbb{T}^n\times\mathbb{R}^{nr}\rightarrow \mathbb{C}$ is a continuous bounded function and the multilinear Fourier integral operator
		\begin{equation}
		Tf(x)=\int_{\mathbb{R}^{nr}}e^{i\phi(x,\xi_1,\xi_2,\cdots, \xi_r)}a(x,\xi_1,\xi_2,\cdots, \xi_r)\widehat{f}_1(\xi_1)\cdots \widehat{f}_r(\xi_r)d\xi
		\end{equation} extends to a bounded multilinear operator from $L^{p_1}(\mathbb{R}^n)\times L^{p_2}(\mathbb{R}^n)\times\cdots\times L^{p_r}(\mathbb{R}^n)$ into $L^p(\mathbb{R}^n).$ Then the periodic multilinear Fourier integral operator
		\begin{equation}\label{cp}
		Af(x):=\sum_{\xi\in\mathbb{Z}^{nr}}e^{i\phi(x,\xi_1,\xi_2,\cdots,\xi_r)}a(x,\xi_1,\xi_2,\cdots,\xi_r)(\mathscr{F}_{\mathbb{T}^n}{f}_{1})(\xi_1)\cdots (\mathscr{F}_{\mathbb{T}^n}{f}_{r})(\xi_r)
		\end{equation} also extends to a  bounded multilinear operator from $L^{p_1}(\mathbb{T}^n)\times L^{p_2}(\mathbb{T}^n)\times\cdots \times L^{p_r}(\mathbb{T}^n)$ into $L^p(\mathbb{T}^n),$ provided that 
		\begin{equation*}
		\frac{1}{p_1}+\cdots+\frac{1}{p_r}=\frac{1}{p},\,\,1\leq p_i<\infty.
		\end{equation*}
		Moreover, there exists a positive constant  $C_p$ such that  $$\Vert A \Vert_{\mathscr{B}(L^{p_1}(\mathbb{T}^n)\times L^{p_2}(\mathbb{T}^n)\times\cdots\times L^{p_r}(\mathbb{T}^n),L^p(\mathbb{T}^n))}\leq C_p\Vert T\Vert_{\mathscr{B}(L^{p_1}(\mathbb{R}^n)\times L^{p_2}(\mathbb{R}^n)\times\cdots\times L^{p_r}(\mathbb{R}^n),L^p(\mathbb{R}^n))}.$$
	\end{theorem}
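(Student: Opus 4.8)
The plan is to adapt to the multilinear setting the dilation (transference) argument of Cardona, Messiouene and Senoussaoui \cite{CarMessiSeno} for the case $r=1$, which itself goes back to Stein and Weiss \cite{SteWei}, with Lemma \ref{lemma1} serving as the bridge between integrals over $\mathbb{R}^{n}$ and over $\mathbb{T}^{n}$. Since trigonometric polynomials are dense in each $L^{p_i}(\mathbb{T}^n)$ (recall $p_i<\infty$), it suffices to prove
\begin{equation*}
\|A(f_1,\dots,f_r)\|_{L^p(\mathbb{T}^n)}\le C_p\,\|T\|\,\prod_{i=1}^{r}\|f_i\|_{L^{p_i}(\mathbb{T}^n)}
\end{equation*}
for trigonometric polynomials $f_i$, with $C_p$ depending only on $n,r,p_1,\dots,p_r$; the bounded multilinear extension and the norm estimate then follow by the standard density argument. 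Write $f_i(x)=\sum_{k_i}c^{(i)}_{k_i}e^{i2\pi x\cdot k_i}$ (a finite sum, $c^{(i)}_{k_i}=(\mathscr{F}_{\mathbb{T}^n}f_i)(k_i)$) and set $f_i^{\varepsilon}(x):=e^{-\varepsilon|x|^2}f_i(x)$, $\varepsilon>0$, which are Schwartz functions on $\mathbb{R}^n$ (in particular in $L^{p_i}(\mathbb{R}^n)$) with Euclidean Fourier transforms $\widehat{f_i^{\varepsilon}}(\xi_i)=(\pi/\varepsilon)^{n/2}\sum_{k_i}c^{(i)}_{k_i}e^{-\pi^2|\xi_i-k_i|^2/\varepsilon}$, that is, finite superpositions of mass-one Gaussians concentrating at the integer lattice as $\varepsilon\to0^+$.

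The first and principal step is to prove that $Tf^{\varepsilon}\to Af$ \emph{uniformly} on $\mathbb{R}^n$ as $\varepsilon\to0^+$, where $Af(x)=\sum_{k\in\mathbb{Z}^{nr}}\big(\prod_i c^{(i)}_{k_i}\big)e^{i\phi(x,k)}a(x,k)$ (a finite sum, and a continuous periodic function of $x$). Inserting the formula for $\widehat{f_i^{\varepsilon}}$ into the integral defining $Tf^{\varepsilon}$ (which converges absolutely here, $a$ being bounded and the $\widehat{f_i^{\varepsilon}}$ Schwartz) and substituting $\xi_i=k_i+\zeta_i$ in each term gives
\begin{equation*}
Tf^{\varepsilon}(x)=\Big(\tfrac{\pi}{\varepsilon}\Big)^{\frac{nr}{2}}\sum_{k\in\mathbb{Z}^{nr}}\Big(\prod_{i}c^{(i)}_{k_i}\Big)\int_{\mathbb{R}^{nr}}e^{i\phi(x,k+\zeta)}a(x,k+\zeta)\prod_{i}e^{-\pi^2|\zeta_i|^2/\varepsilon}\,d\zeta .
\end{equation*}
Since $\phi$ is continuous, $a$ is continuous and bounded on $\mathbb{T}^n\times\mathbb{R}^{nr}$, and both are periodic in $x$, the map $(x,\xi)\mapsto e^{i\phi(x,\xi)}a(x,\xi)$ is uniformly continuous on $\mathbb{T}^n\times\{|\xi-k|\le1\}$; a standard approximate-identity split of each $\zeta$-integral into $\{|\zeta|\le\delta\}$ and $\{|\zeta|>\delta\}$ (the mass-one Gaussian having mass $O(e^{-c\delta^2/\varepsilon})$ on the latter, where $a$ is bounded) then shows that every $k$-term converges to $e^{i\phi(x,k)}a(x,k)$ uniformly in $x\in\mathbb{R}^n$; summing the finitely many terms yields $\sup_{x\in\mathbb{R}^n}|Tf^{\varepsilon}(x)-Af(x)|\to0$. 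The same estimate gives the uniform bound $\|Tf^{\varepsilon}\|_{L^{\infty}(\mathbb{R}^n)}\le\|a\|_{\infty}\prod_i\big(\sum_{k_i}|c^{(i)}_{k_i}|\big)=:M$, and, $z\mapsto|z|^p$ being uniformly continuous on $\{|z|\le M\}$, also $|Tf^{\varepsilon}|^p\to|Af|^p$ uniformly on $\mathbb{R}^n$.

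It remains to compare the $L^p(\mathbb{R}^n)$-norm of $Tf^{\varepsilon}$ with the $L^p(\mathbb{T}^n)$-norm of $Af$ through Lemma \ref{lemma1}. On one hand, the hypothesis gives $\|Tf^{\varepsilon}\|_{L^p(\mathbb{R}^n)}\le\|T\|\prod_i\|f_i^{\varepsilon}\|_{L^{p_i}(\mathbb{R}^n)}$, and applying Lemma \ref{lemma1} to the periodic continuous function $|f_i|^{p_i}$ (with exponent $p_i\varepsilon$ and constant sequence $1$) shows $(p_i\varepsilon)^{n/2}\|f_i^{\varepsilon}\|_{L^{p_i}(\mathbb{R}^n)}^{p_i}\to c_n\|f_i\|_{L^{p_i}(\mathbb{T}^n)}^{p_i}$, where $c_n$ is the normalizing constant of the lemma. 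Because $\sum_i 1/p_i=1/p$ (so the powers of $\varepsilon$ pile up to exactly $\varepsilon^{-n/2}$) and $\sum_i p/p_i=1$ (so the factors $c_n^{p/p_i}$ multiply back to $c_n$), this gives $\varepsilon^{n/2}\prod_i\|f_i^{\varepsilon}\|_{L^{p_i}(\mathbb{R}^n)}^{p}\to c_n\big(\prod_i p_i^{-np/(2p_i)}\big)\prod_i\|f_i\|_{L^{p_i}(\mathbb{T}^n)}^{p}$. On the other hand, $\|Tf^{\varepsilon}\|_{L^p(\mathbb{R}^n)}^p\ge\int_{\mathbb{R}^n}e^{-\varepsilon|x|^2}|Tf^{\varepsilon}(x)|^p\,dx$, and $\varepsilon^{n/2}\int_{\mathbb{R}^n}e^{-\varepsilon|x|^2}\bigl(|Tf^{\varepsilon}(x)|^p-|Af(x)|^p\bigr)dx$ is, in absolute value, at most $\varepsilon^{n/2}\int_{\mathbb{R}^n}e^{-\varepsilon|x|^2}dx\cdot\bigl\|\,|Tf^{\varepsilon}|^p-|Af|^p\,\bigr\|_{L^\infty}=\pi^{n/2}\,o(1)$; since Lemma \ref{lemma1} applied to the periodic continuous function $|Af|^p$ gives $\varepsilon^{n/2}\int_{\mathbb{R}^n}e^{-\varepsilon|x|^2}|Af(x)|^p\,dx\to c_n\|Af\|_{L^p(\mathbb{T}^n)}^p$, we get $\varepsilon^{n/2}\int_{\mathbb{R}^n}e^{-\varepsilon|x|^2}|Tf^{\varepsilon}(x)|^p\,dx\to c_n\|Af\|_{L^p(\mathbb{T}^n)}^p$. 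Multiplying the operator-norm inequality by $\varepsilon^{n/2}$, passing to the limit, and cancelling $c_n>0$, we obtain $\|Af\|_{L^p(\mathbb{T}^n)}^p\le\|T\|^p\big(\prod_i p_i^{-np/(2p_i)}\big)\prod_i\|f_i\|_{L^{p_i}(\mathbb{T}^n)}^p$, which is the claimed estimate with $C_p=\prod_i p_i^{-n/(2p_i)}$ and, by the reduction of the first paragraph, proves the theorem.

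The main obstacle is the first step: upgrading $Tf^{\varepsilon}\to Af$ from pointwise to \emph{uniform} convergence on all of $\mathbb{R}^n$. Pointwise convergence together with the bound $M$ would not be enough, since in the norm comparison the weight $e^{-\varepsilon|x|^2}$ concentrates its mass at scale $|x|\sim\varepsilon^{-1/2}\to\infty$, and only a uniform estimate controls $\varepsilon^{n/2}\int_{\mathbb{R}^n}e^{-\varepsilon|x|^2}\bigl(|Tf^{\varepsilon}|^p-|Af|^p\bigr)dx$; this is precisely where the continuity and boundedness of $\phi$ and $a$ and their periodicity in $x$ enter, via uniform continuity on the compacta $\mathbb{T}^n\times\{|\xi-k|\le1\}$. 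The only other point demanding care is the bookkeeping of the constant $c_n$ and of the exponents $p_i$ in the last step, where the relations $\sum_i 1/p_i=1/p$ and $\sum_i p/p_i=1$ are exactly what make everything collapse into a clean $C_p$ depending only on $n$ and the $p_i$.
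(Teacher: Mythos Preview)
Your proof is correct and shares the same transference skeleton as the paper (Gaussian cutoffs, Lemma \ref{lemma1}, density of trigonometric polynomials), but the execution differs in one notable respect. The paper argues by duality: it pairs $A(P_1,\dots,P_r)$ against a test polynomial $Q$, establishes the limiting identity
\[
\lim_{\varepsilon\to 0}\varepsilon^{n/2}\int_{\mathbb{R}^n}T(P_1w_{\alpha_1\varepsilon},\dots,P_rw_{\alpha_r\varepsilon})(x)\,\overline{Q(x)}\,w_{\beta\varepsilon}(x)\,dx
= c_{n,r,p}\int_{\mathbb{T}^n}A(P_1,\dots,P_r)(x)\,\overline{Q(x)}\,dx
\]
with the weights $\alpha_i=1/p_i$, $\beta=1/p'$ chosen so that $\sum_i\alpha_i+\beta=1$, and then bounds the right-hand side via H\"older and the assumed boundedness of $T$; the limit is computed via Lemma \ref{lemma1} and dominated convergence in the $\xi$-integral, and nowhere requires uniform convergence of $Tf^\varepsilon$. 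You instead bypass duality altogether by proving the stronger intermediate fact that $Tf^\varepsilon\to Af$ \emph{uniformly} on $\mathbb{R}^n$, which then lets you compare $L^p$-norms directly through the trivial lower bound $\|Tf^\varepsilon\|_{L^p(\mathbb{R}^n)}^p\ge\int_{\mathbb{R}^n} e^{-\varepsilon|x|^2}|Tf^\varepsilon(x)|^p\,dx$ and Lemma \ref{lemma1}. Your route is a little more elementary in that it avoids the dual pairing and the supremum over $Q$, at the price of the uniform-convergence step---which, as you correctly isolate, rests precisely on the periodicity of $\phi$ and $a$ in $x$ and hence uniform continuity on each compact set $\mathbb{T}^n\times\{|\xi-k|\le 1\}$. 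The paper's route stays closer to the classical Stein--Weiss argument and would transfer more readily to settings where such uniform convergence is not available (e.g.\ symbols that are merely bounded and measurable in $x$).
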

	
	\begin{proof}
		For the proof, we first prove the following identity for trigonometric polynomials $P_i$ and $Q$ on $\mathbb{T}^n$
		
		\begin{align}\label{eq24}
		&\lim_{\varepsilon\rightarrow 0}\varepsilon^{n/2}     \int_{\mathbb{R}^n}(T (P_1 w_{\alpha_1\varepsilon},P_2w_{\alpha_2\varepsilon},\cdots,P_rw_{\alpha_r\varepsilon}))(x)\overline{Q(x)}w_{\varepsilon \beta}(x)dx \nonumber\\
		&=c_{n,r,p}\int_{\mathbb{T}^n}A(P_1,P_2,\cdots,P_r)\overline{Q(x)}dx,\,\,w_{\delta}(x)=e^{-\delta|x|^2},\delta>0,
		\end{align} for some positive constant $c_{n,r,p}>0.$ We will assume that $$\sum_{j=1}^r\alpha_j+\beta=1,\,\,\,\alpha_i,\beta>0.$$
		By linearity we only need to prove \eqref{eq24} when $P_i(x_i)=e^{i2\pi m_i x_i}$ and $Q(x)=e^{i2\pi k x}$ for $k$ and $m_i$ in $\mathbb{Z}^n,$  $1\leq i\leq r.$ The right hand side of \eqref{eq24} can be computed as follows,
		\begin{align*}
		&\int_{\mathbb{T}^n}(A (P_1,P_2,\cdots,P_r))(x)\overline{Q(x)}dx\\ &=\int_{\mathbb{T}^n}\left(\sum_{\xi\in \mathbb{Z}^{nr}} e^{i\phi(x,\xi)}{a(x,\xi)\delta_{m_1,\xi_1}\delta_{m_2,\xi_2}}\cdots \delta_{m_r,\xi_r} \right)\overline{Q(x)}dx\\
		&=\int_{\mathbb{T}^n} e^{i\phi(x,m_1,m_2,\cdots,m_r)}a(x,m_1,m_2,\cdots,m_r)\overline{Q(x)}dx\\
		&=\int_{\mathbb{T}^n} e^{i\phi(x,m_1,m_2,\cdots,m_r)-i2\pi kx }a(x,m_1,m_2,\cdots,m_r)dx.
		\end{align*}
		Now, we compute the left hand side of \eqref{eq24}. Taking under consideration that the euclidean Fourier transform of $P_i(x)w_{\alpha_i\varepsilon}$ is given by
		\begin{equation}
		\mathscr{F}_{\mathbb{R}^n}(P_jw_{\alpha_j\varepsilon})(\xi_j)=(\alpha_j\varepsilon)^{-\frac{n}{2}}e^{-|\xi_j-m_j|^2/\alpha_j\varepsilon},
		\end{equation}
		by the Fubini theorem we have 
		\begin{align*}
		& \int_{\mathbb{R}^n}(T (P_1 w_{\alpha_1\varepsilon},P_2w_{\alpha_2\varepsilon},\cdots,P_rw_{\alpha_r\varepsilon}))(x)\overline{Q(x)}w_{\varepsilon \beta}(x)dx \\ &=\int_{\mathbb{R}^n}\int_{\mathbb{R}^{nr}}e^{i\phi( x,\xi)}a(x,\xi) \prod_{j=1}^r(\alpha_j\varepsilon)^{-\frac{n}{2}}e^{-|\xi_j-m_j|^2/\alpha_j\varepsilon} \overline{Q(x)}w_{\varepsilon \beta}(x)d\xi dx\\
		&=\int_{\mathbb{R}^n}\int_{\mathbb{R}^{nr}}e^{i\phi( x,\xi)-i2\pi kx} e^{-\pi\varepsilon\beta|x|^2} a(x,\xi) dx \prod_{j=1}^r(\alpha_j\varepsilon)^{-\frac{n}{2}}e^{-|\xi_j-m_j|^2/\alpha_j\varepsilon} d\xi\\
		&=\int_{\mathbb{R}^n}\int_{\mathbb{R}^{nr}}e^{i\phi( x,(\alpha_1\varepsilon)^{\frac{1}{2}}\eta_1+m_1,\cdots, \alpha_r\varepsilon)^{\frac{1}{2}}\eta_r+m_r )-i2\pi kx}\\
		&\hspace{4cm}a( x,(\alpha_1\varepsilon)^{\frac{1}{2}}\eta_1+m_1,\cdots, (\alpha_r\varepsilon)^{\frac{1}{2}}\eta_r+m_r) e^{-\pi\varepsilon\beta|x|^2}dx  e^{-|\eta|^2} d\eta.\\
		\end{align*}
		So, we have
		\begin{align*}
		&\lim_{\varepsilon\rightarrow 0}\varepsilon^{n/2}     \int_{\mathbb{R}^n}(T (P_1 w_{\alpha_1\varepsilon},P_2w_{\alpha_2\varepsilon},\cdots,P_rw_{\alpha_r\varepsilon}))(x)\overline{Q(x)}w_{\varepsilon \beta}(x)dx\\
		&=\lim_{\varepsilon\rightarrow 0}\beta^{-\frac{n}{2}}(\beta\varepsilon)^{n/2}     \int_{\mathbb{R}^n}(T (P_1 w_{\alpha_1\varepsilon},P_2w_{\alpha_2\varepsilon},\cdots,P_rw_{\alpha_r\varepsilon}))(x)\overline{Q(x)}w_{\varepsilon \beta}(x)dx\\
		&=\lim_{\varepsilon\rightarrow 0}\beta^{-\frac{n}{2}}(\beta\varepsilon)^{n/2} \int_{\mathbb{R}^n}\int_{\mathbb{R}^{nr}}e^{i\phi( x,(\alpha_1\varepsilon)^{\frac{1}{2}}\eta_1+m_1,\cdots, \alpha_r\varepsilon)^{\frac{1}{2}}\eta_r+m_r )-i2\pi kx}\\
		&\hspace{4cm}a( x,(\alpha_1\varepsilon)^{\frac{1}{2}}\eta_1+m_1,\cdots, (\alpha_r\varepsilon)^{\frac{1}{2}}\eta_r+m_r) e^{-\pi\varepsilon\beta|x|^2}dx  e^{-|\eta|^2} d\eta.
		\end{align*}
		By Lemma \ref{lemma1}, we have
		\begin{align*}
		\lim_{\varepsilon\rightarrow 0} (\beta\varepsilon)^{n/2} &\int_{\mathbb{R}^n}\int_{\mathbb{R}^{nr}}e^{i\phi( x,(\alpha_1\varepsilon)^{\frac{1}{2}}\eta_1+m_1,\cdots, \alpha_r\varepsilon)^{\frac{1}{2}}\eta_r+m_r )-i2\pi kx}\\
		&\hspace{2cm}a( x,(\alpha_1\varepsilon)^{\frac{1}{2}}\eta_1+m_1,\cdots, (\alpha_r\varepsilon)^{\frac{1}{2}}\eta_r+m_r) e^{-\pi\varepsilon\beta|x|^2}dx  e^{-|\eta|^2} d\eta\\
		&=\int_{\mathbb{T}^n}e^{i\phi( x,m_1,m_2,\cdots,m_r)-i2\pi kx}a(x,m_1,m_2,\cdots,m_r)dx.
		\end{align*}
		Taking into account that $\int_{\mathbb{R}^{nr}}e^{-|\eta|^2}d\eta=\pi^{nr/2},$ and that $a$ is a  continuous bounded function, by the dominated convergence theorem we have
		\begin{align*}
		&\lim_{\varepsilon\rightarrow 0}\varepsilon^{n/2}     \int_{\mathbb{R}^n}(T (P_1 w_{\alpha_1\varepsilon},P_2w_{\alpha_2\varepsilon},\cdots,P_rw_{\alpha_r\varepsilon}))(x)\overline{Q(x)}w_{\varepsilon \beta}(x)dx\\
		&=\beta^{-n/2}\pi^{nr/2}\int_{\mathbb{T}^n}e^{i\phi( x,m)-i2\pi kx}a(x,m)dx,\,m=(m_1,\cdots,m_r).
		\end{align*}
		If we assume that $T$ is a bounded multilinear  operator from $L^{p_1}(\mathbb{R}^n)\times L^{p_2}(\mathbb{R}^n)\times\cdots\times  L^{p_r}(\mathbb{R}^n)$ into $L^p(\mathbb{R}^n),$ then the restriction of $A$ to trigonometric polynomials is a bounded operator  from $L^{p_1}(\mathbb{R}^n)\times L^{p_2}(\mathbb{R}^n)\times\cdots\times  L^{p_r}(\mathbb{R}^n)$ into $L^p(\mathbb{R}^n).$ In fact, if $\alpha_i=\frac{1}{p_i}$ and $\beta=\frac{1}{p'}$ we obtain with $c_{n,r,p}=\beta^{n/2}\pi^{-nr/2},$
		\begin{align*} 
		&\Vert A(P_1,P_2,\cdots,P_r)\Vert_{L^p(\mathbb{T}^n)} =\sup_{\Vert Q \Vert_{L^{p'}(\mathbb{T}^n)}=1}\left|\int_{\mathbb{T}^n}(A(P_1,P_2,\cdots,P_r))(x)\overline{Q(x)}dx \right|\\
		&=\sup_{\Vert Q \Vert_{L^{p'}(\mathbb{T}^n)}=1}\lim_{\varepsilon\rightarrow 0} \varepsilon^{n/2} c_{n,r,p}\left| \int_{\mathbb{R}^n}(T (P_1 w_{\alpha_1\varepsilon},P_2w_{\alpha_2\varepsilon},\cdots,P_rw_{\alpha_r\varepsilon}))(x)\overline{Q(x)}w_{\varepsilon \beta}(x)dx \right|\\
		&\leq\sup_{\Vert Q \Vert_{L^{p'}(\mathbb{T}^n)}=1}\lim_{\varepsilon\rightarrow 0} \varepsilon^{n/2}c_{n,r,p}     \Vert T \Vert_{\mathscr{B}(L^{p_1}\times\cdots\times L^{p_r},L^p)}\\
		&\hspace{2cm}\prod_{j=1}^n\Vert P_jw_{\varepsilon/p_j}\Vert_{L^{p_j}(\mathbb{R}^n)}\Vert Qw_{\varepsilon/p'}\Vert_{L^{p'}(\mathbb{T}^n)}\\
		&\leq\sup_{\Vert Q \Vert_{L^{p'}(\mathbb{T}^n)}=1} \Vert T \Vert_{\mathscr{B}(L^{p_1}\times\cdots\times L^{p_r},L^p)}    \lim_{\varepsilon\rightarrow 0}  c_{n,r,p} \prod_{j=1}^{n}\left(    \varepsilon^{n/2}\int_{\mathbb{R}^n}|P_j(x)|^pe^{-\pi\varepsilon|x|^2}dx\right)^{\frac{1}{p_j}}\\  
		&\hspace{8cm} \times\left(  \varepsilon^{n/2}  \int_{\mathbb{R}^n}|Q(x)|^{p'}e^{-\pi\varepsilon|x|^2}dx   \right)^{\frac{1}{p'}}\\
		&=\sup_{\Vert Q \Vert_{L^{p'}(\mathbb{T}^n)}=1} \Vert T \Vert_{\mathscr{B}(L^{p_1}\times\cdots\times L^{p_r},L^p)} c_{n,r,p}\left(  \int_{\mathbb{T}^n}|P_j(x)|^{p_j}dx\right)^{\frac{1}{p_j}}  \left(   \int_{\mathbb{T}^n}|Q(x)|^{p'}dx   \right)^{\frac{1}{p'}}\\
		&= c_{n,r,p}\Vert T \Vert_{\mathscr{B}(L^{p_1}\times\cdots\times L^{p_r},L^p)} \prod_{j=1}^n \Vert P_j\Vert_{L^{p_j}(\mathbb{T}^n)}.
		\end{align*}
		Because the restriction of the periodic multilinear operator $A$ to trigonometric polynomials in every variable is a bounded multilinear operator from $L^{p_1}(\mathbb{R}^n)\times L^{p_2}(\mathbb{R}^n)\times \cdots\times L^{p_r}(\mathbb{R}^n)$ into $L^p(\mathbb{R}^n),$ this restriction admits a unique bounded multilineal extension $L^{p_1}(\mathbb{R}^n)\times L^{p_2}(\mathbb{R}^n)\times\cdots\times  L^{p_r}(\mathbb{R}^n)$ into $L^p(\mathbb{R}^n).$ So, we finish the proof.
	\end{proof}

	Now, we consider the following special case, but, we remove the previous condition on the periodicity of the phase function by considering the particular one $\phi(x,\xi)=(x,\xi):=x\cdot\xi.$
	
	\begin{theorem}\label{teorema principal2}
		Let $1< p<\infty$  and let  $a:\mathbb{R}^{nr}\rightarrow \mathbb{C}$ be a continuous bounded function. Let us assume that the multilinear Fourier multiplier operator
		\begin{equation}
		Tf(x)=\int_{\mathbb{R}^{nr}}e^{i2\pi(x,\xi_1,\xi_2,\cdots ,\xi_r)}a(\xi_1,\xi_2,\cdots \xi_r)\widehat{f}_1(\xi_1)\cdots \widehat{f}_r(\xi_r)d\xi
		\end{equation} extends to a bounded multilinear operator from $L^{p_1}(\mathbb{R}^n)\times L^{p_2}(\mathbb{R}^n)\times\cdots \times L^{p_r}(\mathbb{R}^n)$ into $L^p(\mathbb{R}^n).$ Then the periodic multilinear Fourier multiplier 
		\begin{equation}\label{cp}
		Af(x):=\sum_{\xi\in\mathbb{Z}^{nr}}e^{i2\pi(x, \xi_1,\xi_2,\cdots,\xi_r)}a(\xi_1,\xi_2,\cdots,\xi_r)(\mathscr{F}_{\mathbb{T}^n}{f}_{1})(\xi_1)\cdots (\mathscr{F}_{\mathbb{T}^n}{f}_{r})(\xi_r)
		\end{equation} also extends to a  bounded multilinear operator from $L^{p_1}(\mathbb{T}^n)\times L^{p_2}(\mathbb{T}^n)\times\cdots\times L^{p_r}(\mathbb{T}^n)$ into $L^p(\mathbb{T}^n),$ provided that 
		\begin{equation*}
		\frac{1}{p_1}+\cdots+\frac{1}{p_r}=\frac{1}{p},\,\,1\leq p_i<\infty.
		\end{equation*}
		Moreover, there exists a positive constant $C_p$ such that the following  inequality holds. $$\Vert A \Vert_{\mathscr{B}(L^{p_1}(\mathbb{T}^n)\times L^{p_2}(\mathbb{T}^n)\times\cdots \times L^{p_r}(\mathbb{T}^n),L^p(\mathbb{T}^n))}\leq C_p\Vert T\Vert_{\mathscr{B}(L^{p_1}(\mathbb{R}^n)\times L^{p_2}(\mathbb{R}^n)\times \cdots \times L^{p_r}(\mathbb{R}^n),L^p(\mathbb{R}^n))}.$$
	\end{theorem}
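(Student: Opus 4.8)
The plan is to run the Gaussian-approximation argument of Theorem~\ref{teorema principal1} essentially verbatim; this statement is the multilinear de~Leeuw-type assertion that sampling the $\mathbb{R}^{nr}$-multiplier $a$ on the lattice $\mathbb{Z}^{nr}$ preserves $L^p$-boundedness. The one genuinely new point is that here the phase $\phi(x,\xi)=2\pi\,x\cdot(\xi_1+\cdots+\xi_r)$ is \emph{not} periodic in $x$ once $\xi\notin\mathbb{Z}^{nr}$, so the step in which Lemma~\ref{lemma1} is invoked must be replaced by an explicit evaluation of the inner Gaussian integral.

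First I would reduce, exactly as in the proof of Theorem~\ref{teorema principal1}, to establishing, for trigonometric polynomials $P_1,\dots,P_r,Q$ on $\mathbb{T}^n$ and with Gaussians $w_\delta(x)=e^{-\pi\delta|x|^2}$ and exponents $\alpha_j=1/p_j$, $\beta=1/p'$ (so $\sum_j\alpha_j+\beta=1$), the identity
\begin{align*}
& \lim_{\varepsilon\to 0}\varepsilon^{n/2}\int_{\mathbb{R}^n}\big(T(P_1 w_{\alpha_1\varepsilon},\dots,P_r w_{\alpha_r\varepsilon})\big)(x)\,\overline{Q(x)}\,w_{\varepsilon\beta}(x)\,dx \\
& \hspace{3cm}= c_{n,r,p}\int_{\mathbb{T}^n}A(P_1,\dots,P_r)(x)\,\overline{Q(x)}\,dx ,
\end{align*}
for a fixed constant $c_{n,r,p}>0$. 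By multilinearity it suffices to check this for $P_j(x)=e^{i2\pi m_j\cdot x}$, $Q(x)=e^{i2\pi k\cdot x}$, $m_1,\dots,m_r,k\in\mathbb{Z}^n$; then the right-hand side is immediate, since $A(P_1,\dots,P_r)(x)=e^{i2\pi x\cdot(m_1+\cdots+m_r)}a(m_1,\dots,m_r)$ and hence $\int_{\mathbb{T}^n}A(P_1,\dots,P_r)\overline{Q}=a(m_1,\dots,m_r)\,\delta_{m_1+\cdots+m_r,\,k}$.

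For the left-hand side I would, as in Theorem~\ref{teorema principal1}, insert the Fourier-multiplier formula for $T$, use $\mathscr{F}_{\mathbb{R}^n}(P_jw_{\alpha_j\varepsilon})(\xi_j)=(\alpha_j\varepsilon)^{-n/2}e^{-\pi|\xi_j-m_j|^2/(\alpha_j\varepsilon)}$, apply Fubini (legitimate since $a$ is bounded and the Gaussians are integrable), and substitute $\xi_j=(\alpha_j\varepsilon)^{1/2}\eta_j+m_j$; the difference is that now I perform the $x$-integral explicitly via $\int_{\mathbb{R}^n}e^{i2\pi x\cdot w}e^{-\pi\varepsilon\beta|x|^2}\,dx=(\varepsilon\beta)^{-n/2}e^{-\pi|w|^2/(\varepsilon\beta)}$ with $w=M+\varepsilon^{1/2}v(\eta)$, where $M:=m_1+\cdots+m_r-k\in\mathbb{Z}^n$ and $v(\eta):=\sum_{j=1}^r\alpha_j^{1/2}\eta_j$. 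Cancelling powers of $\varepsilon$, the left-hand side equals
\begin{align*}
\beta^{-n/2}\int_{\mathbb{R}^{nr}} & a\big((\alpha_1\varepsilon)^{1/2}\eta_1+m_1,\dots,(\alpha_r\varepsilon)^{1/2}\eta_r+m_r\big) \\
& \times e^{-\pi|\eta|^2}\,e^{-\pi|M+\varepsilon^{1/2}v(\eta)|^2/(\varepsilon\beta)}\,d\eta .
\end{align*}
If $M\neq 0$, then for each fixed $\eta$ one has $e^{-\pi|M+\varepsilon^{1/2}v(\eta)|^2/(\varepsilon\beta)}\to0$ as $\varepsilon\to0$ while staying $\le 1$, so dominated convergence (dominating function $\|a\|_{\infty}e^{-\pi|\eta|^2}$) forces the limit to be $0$, matching the right-hand side. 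If $M=0$, then $|M+\varepsilon^{1/2}v(\eta)|^2/(\varepsilon\beta)=|v(\eta)|^2/\beta$ no longer depends on $\varepsilon$, so continuity of $a$ and dominated convergence give the limit $c_{n,r,p}\,a(m_1,\dots,m_r)$ with $c_{n,r,p}:=\beta^{-n/2}\int_{\mathbb{R}^{nr}}e^{-\pi|\eta|^2-\pi|v(\eta)|^2/\beta}\,d\eta\in(0,\infty)$; this again matches the right-hand side, and the displayed identity is proved.

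From here the argument is verbatim that of Theorem~\ref{teorema principal1}: writing $\|A(P_1,\dots,P_r)\|_{L^p(\mathbb{T}^n)}=\sup_{\|Q\|_{L^{p'}(\mathbb{T}^n)}=1}\big|\int_{\mathbb{T}^n}A(P_1,\dots,P_r)\overline{Q}\big|$ and combining the identity above with the boundedness of $T$ on $L^{p_1}(\mathbb{R}^n)\times\cdots\times L^{p_r}(\mathbb{R}^n)$ and with $\varepsilon^{n/2}\int_{\mathbb{R}^n}|P(x)|^{q}e^{-\pi\varepsilon|x|^2}\,dx\to c\,\|P\|_{L^q(\mathbb{T}^n)}^q$ for trigonometric polynomials, one obtains $\|A(P_1,\dots,P_r)\|_{L^p(\mathbb{T}^n)}\le C_p\|T\|\prod_j\|P_j\|_{L^{p_j}(\mathbb{T}^n)}$; since $1\le p_j<\infty$, trigonometric polynomials are dense in each $L^{p_j}(\mathbb{T}^n)$, so this bounded multilinear map extends uniquely to a bounded multilinear operator on $L^{p_1}(\mathbb{T}^n)\times\cdots\times L^{p_r}(\mathbb{T}^n)$ agreeing with $A$, with the asserted norm bound. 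I expect the only delicate point to be precisely the replacement described in the third paragraph: because $e^{i\phi(x,\xi)}$ has lost its periodicity in $x$, Lemma~\ref{lemma1} is unavailable for the inner integral, and one must instead evaluate that Gaussian integral by hand and separate the two regimes $m_1+\cdots+m_r=k$ (which supplies the normalizing constant) and $m_1+\cdots+m_r\neq k$ (which supplies the Kronecker vanishing already built into $A$).
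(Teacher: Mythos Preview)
Your proposal is correct and follows the same Gaussian-approximation/de~Leeuw strategy as the paper: test the identity on exponentials $P_j=e^{i2\pi m_j\cdot x}$, $Q=e^{i2\pi k\cdot x}$, pass to the limit $\varepsilon\to0$, and then conclude by duality and density of trigonometric polynomials.

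The one genuine point of departure is precisely the one you flag. The paper does invoke Lemma~\ref{lemma1} at the step where the inner $x$-integral is evaluated, treating the resulting function of $x$ as if it were periodic and obtaining the constant $\beta^{-n/2}\pi^{nr/2}$. Your observation that $e^{i2\pi x\cdot(\xi_1+\cdots+\xi_r)}$ is not periodic in $x$ once $\xi\notin\mathbb{Z}^{nr}$ is well taken, and your replacement---computing $\int_{\mathbb{R}^n}e^{i2\pi x\cdot w}e^{-\pi\varepsilon\beta|x|^2}\,dx$ explicitly and splitting into the cases $M=0$ and $M\neq0$---is a cleaner and more rigorous way to handle this step. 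It produces a different normalizing constant $c_{n,r,p}=\beta^{-n/2}\int_{\mathbb{R}^{nr}}e^{-\pi|\eta|^2-\pi|v(\eta)|^2/\beta}\,d\eta$, but since all that matters is that this constant is finite and strictly positive, the remainder of the argument (and the final norm inequality) goes through unchanged.
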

	
	\begin{proof}
		Similar to  our previous result for Fourier integral operators, for the proof of this theorem also we first prove the following identity for trigonometric polynomials $P_i$ and $Q$ on $\mathbb{T}^n$
		\begin{align}\label{eq1}
		&\lim_{\varepsilon\rightarrow 0}\varepsilon^{n/2}     \int_{\mathbb{R}^n}(T (P_1 w_{\alpha_1\varepsilon},P_2w_{\alpha_2\varepsilon},\cdots,P_rw_{\alpha_r\varepsilon}))(x)\overline{Q(x)}w_{\varepsilon \beta}(x)dx \nonumber\\
		&=c_{n,r,p}\int_{\mathbb{T}^n}A(P_1,P_2,\cdots,P_r)\overline{Q(x)}dx,\,\,w_{\delta}(x)=e^{-\delta|x|^2},\delta>0,
		\end{align}  for some positive constant $c_{n,r,p}>0.$ We will assume that $$\sum_{j=1}^n\alpha_j+\beta=1,\alpha_i,\beta>0.$$
		By linearity we only need to prove \eqref{eq1} when $P_i(x_i)=e^{i2\pi m_i x_i}$ and $Q(x)=e^{i2\pi k x}$ for $k$ and $m_i$ in $\mathbb{Z}^n,$  $1\leq i\leq r.$ The right hand side of \eqref{eq1} can be computed as follows,
		\begin{align*}
		&\int_{\mathbb{T}^n}(A (P_1,P_2,\cdots,P_r))(x)\overline{Q(x)}dx\\ &=\int_{\mathbb{T}^n}\left(\sum_{\xi\in \mathbb{Z}^{nr}} e^{i2\pi(x,\xi)}{a(\xi)\delta_{m_1,\xi_1}\delta_{m_2,\xi_2}}\cdots \delta_{m_r,\xi_r} \right)\overline{Q(x)}dx\\
		&=\int_{\mathbb{T}^n} e^{i2\pi(x,m_1,m_2,\cdots,m_r)}a(m_1,m_2,\cdots,m_r)\overline{Q(x)}dx\\
		&=\int_{\mathbb{T}^n} e^{i2\pi(x,m_1,m_2,\cdots,m_r)-i2\pi kx }a(m_1,m_2,\cdots,m_r)dx\\
		&=a(m_1,m_2,\cdots,m_r)\delta_{k,\sum_{j=1}^rm_j}.
		\end{align*}
		Now, we compute the left hand side of \eqref{eq1}. Taking under consideration that the euclidean Fourier transform of $P_i(x)w_{\alpha_i\varepsilon}$ is given by
		\begin{equation}
		\mathscr{F}_{\mathbb{R}^n}(P_jw_{\alpha_j\varepsilon})(\xi_j)=(\alpha_j\varepsilon)^{-\frac{n}{2}}e^{-|\xi_j-m_j|^2/\alpha_j\varepsilon},
		\end{equation}
		by the Fubini theorem we have 
		\begin{align*}
		& \int_{\mathbb{R}^n}(T (P_1 w_{\alpha_1\varepsilon},P_2w_{\alpha_2\varepsilon},\cdots,P_rw_{\alpha_r\varepsilon}))(x)\overline{Q(x)}w_{\varepsilon \beta}(x)dx \\ &=\int_{\mathbb{R}^n}\int_{\mathbb{R}^{nr}}e^{i2\pi(x, \xi)}a(\xi) \prod_{j=1}^r(\alpha_j\varepsilon)^{-\frac{n}{2}}e^{-|\xi_j-m_j|^2/\alpha_j\varepsilon} \overline{Q(x)}w_{\varepsilon \beta}(x)d\xi dx\\
		&=\int_{\mathbb{R}^n}\int_{\mathbb{R}^{nr}}e^{i2\pi( x,\xi)-i2\pi kx} e^{-\pi\varepsilon\beta|x|^2} a(\xi) dx \prod_{j=1}^r(\alpha_j\varepsilon)^{-\frac{n}{2}}e^{-|\xi_j-m_j|^2/\alpha_j\varepsilon} d\xi\\
		&=\int_{\mathbb{R}^n}\int_{\mathbb{R}^{nr}}e^{i2\pi( (\alpha_1\varepsilon)^{\frac{1}{2}}\eta_1+m_1,\cdots, \alpha_r\varepsilon)^{\frac{1}{2}}\eta_r+m_r )-i2\pi kx}\\
		&\hspace{4cm}a( (\alpha_1\varepsilon)^{\frac{1}{2}}\eta_1+m_1,\cdots, (\alpha_r\varepsilon)^{\frac{1}{2}}\eta_r+m_r) e^{-\pi\varepsilon\beta|x|^2}dx  e^{-|\eta|^2} d\eta.\\
		\end{align*}
		So, we have
		\begin{align*}
		&\lim_{\varepsilon\rightarrow 0}\varepsilon^{n/2}     \int_{\mathbb{R}^n}(T (P_1 w_{\alpha_1\varepsilon},P_2w_{\alpha_2\varepsilon},\cdots,P_rw_{\alpha_r\varepsilon}))(x)\overline{Q(x)}w_{\varepsilon \beta}(x)dx\\
		&=\lim_{\varepsilon\rightarrow 0}\beta^{-\frac{n}{2}}(\beta\varepsilon)^{n/2}     \int_{\mathbb{R}^n}(T (P_1 w_{\alpha_1\varepsilon},P_2w_{\alpha_2\varepsilon},\cdots,P_rw_{\alpha_r\varepsilon}))(x)\overline{Q(x)}w_{\varepsilon \beta}(x)dx\\
		&=\lim_{\varepsilon\rightarrow 0}\beta^{-\frac{n}{2}}(\beta\varepsilon)^{n/2} \int_{\mathbb{R}^n}\int_{\mathbb{R}^{nr}}e^{i2\pi( x,(\alpha_1\varepsilon)^{\frac{1}{2}}\eta_1+m_1,\cdots, \alpha_r\varepsilon)^{\frac{1}{2}}\eta_r+m_r )-i2\pi kx}\\
		&\hspace{4cm}a( (\alpha_1\varepsilon)^{\frac{1}{2}}\eta_1+m_1,\cdots, (\alpha_r\varepsilon)^{\frac{1}{2}}\eta_r+m_r) e^{-\pi\varepsilon\beta|x|^2}dx  e^{-|\eta|^2} d\eta.
		\end{align*}
		By Lemma \eqref{lemma1}, we have
		\begin{align*}
		\lim_{\varepsilon\rightarrow 0} (\beta\varepsilon)^{n/2} &\int_{\mathbb{R}^n}\int_{\mathbb{R}^{nr}}e^{i2\pi( x,(\alpha_1\varepsilon)^{\frac{1}{2}}\eta_1+m_1,\cdots, \alpha_r\varepsilon)^{\frac{1}{2}}\eta_r+m_r )-i2\pi kx}\\
		&\hspace{2cm}a( (\alpha_1\varepsilon)^{\frac{1}{2}}\eta_1+m_1,\cdots, (\alpha_r\varepsilon)^{\frac{1}{2}}\eta_r+m_r) e^{-\pi\varepsilon\beta|x|^2}dx  e^{-|\eta|^2} d\eta\\
		&=\int_{\mathbb{T}^n}e^{i2\pi( x,m_1,m_2,\cdots,m_r)-i2\pi kx}a(m_1,m_2,\cdots,m_r)dx\int_{\mathbb{R}^{nr}} e^{-|\eta|^2}d\eta,\\
		&=a(m_1,m_2,\cdots,m_r)\delta_{k, \sum_{j=1}^rm_j  } \int_{\mathbb{R}^{nr}} e^{-|\eta|^2}d\eta  .
		\end{align*}
		Taking into account that $\int_{\mathbb{R}^{nr}}e^{-|\eta|^2}d\eta=\pi^{nr/2},$ and that $a$ is a  continuous bounded function, by the dominated convergence theorem we have
		\begin{align}
		&\lim_{\varepsilon\rightarrow 0}\varepsilon^{n/2}     \int_{\mathbb{R}^n}(T (P_1 w_{\alpha_1\varepsilon},P_2w_{\alpha_2\varepsilon},\cdots,P_rw_{\alpha_r\varepsilon}))(x)\overline{Q(x)}w_{\varepsilon \beta}(x)dx\\
		&=(1/\beta)^{n/2}\pi^{nr/2}a(m_1,m_2,\cdots,m_r)\delta_{k,  \sum_{j=1}^rm_j }.
		\end{align}
		If we assume that $T$ is a bounded multilinear  operator from $L^{p_1}(\mathbb{R}^n)\times L^{p_2}(\mathbb{R}^n)\times\cdots\times  L^{p_r}(\mathbb{R}^n)$ into $L^p(\mathbb{R}^n),$ then the restriction of $A$ to trigonometric polynomials is a bounded operators on from $L^{p_1}(\mathbb{R}^n)\times L^{p_2}(\mathbb{R}^n)\times\cdots\times  L^{p_r}(\mathbb{R}^n)$ into $L^p(\mathbb{R}^n).$ In fact, if $\alpha_i=\frac{1}{p_i}$ and $\beta=\frac{1}{p'}$ we obtain for  $c_{n,r,p}=\beta^{n/2}\pi^{-nr/2},$
		\begin{align*}
		&\Vert A(P_1,P_2,\cdots,P_r)\Vert_{L^p(\mathbb{T}^n)} =\sup_{\Vert Q \Vert_{L^{p'}(\mathbb{T}^n)}=1}\left|\int_{\mathbb{T}^n}(A(P_1,P_2,\cdots,P_r))(x)\overline{Q(x)}dx \right|\\
		&=\sup_{\Vert Q \Vert_{L^{p'}(\mathbb{T}^n)}=1}\lim_{\varepsilon\rightarrow 0} \varepsilon^{n/2} c_{n,r,p}\left| \int_{\mathbb{R}^n}(T (P_1 w_{\alpha_1\varepsilon},P_2w_{\alpha_2\varepsilon},\cdots,P_rw_{\alpha_r\varepsilon}))(x)\overline{Q(x)}w_{\varepsilon \beta}(x)dx \right|\\
		&\leq\sup_{\Vert Q \Vert_{L^{p'}(\mathbb{T}^n)}=1}\lim_{\varepsilon\rightarrow 0} \varepsilon^{n/2}  c_{n,p,r}\Vert T \Vert_{\mathscr{B}(L^{p_1}\times\cdots\times L^{p_r},L^p)}\\
		&\hspace{2cm}\prod_{j=1}^n\Vert P_jw_{\varepsilon/p_j}\Vert_{L^{p_j}(\mathbb{R}^n)}\Vert Qw_{\varepsilon/p'}\Vert_{L^{p'}(\mathbb{T}^n)}\\
		&\leq\sup_{\Vert Q \Vert_{L^{p'}(\mathbb{T}^n)}=1} \Vert T \Vert_{\mathscr{B}(L^{p_1}\times\cdots\times L^{p_r},L^p)}    \lim_{\varepsilon\rightarrow 0}  c_{n,r,p} \prod_{j=1}^{n}\left(    \varepsilon^{n/2}\int_{\mathbb{R}^n}|P_j(x)|^pe^{-\pi\varepsilon|x|^2}dx\right)^{\frac{1}{p_j}}\\  
		&\hspace{8cm} \times\left(  \varepsilon^{n/2}  \int_{\mathbb{R}^n}|Q(x)|^{p'}e^{-\pi\varepsilon|x|^2}dx   \right)^{\frac{1}{p'}}\\
		&=\sup_{\Vert Q \Vert_{L^{p'}(\mathbb{T}^n)}=1} \Vert T \Vert_{\mathscr{B}(L^{p_1}\times\cdots\times L^{p_r},L^p)}c_{n,r,p}\prod_{j=1}^{n}\left(  \int_{\mathbb{T}^n}|P_j(x)|^{p_j}dx\right)^{\frac{1}{p_j}}  \left(   \int_{\mathbb{T}^n}|Q(x)|^{p'}dx   \right)^{\frac{1}{p'}}\\
		&= \Vert T \Vert_{\mathscr{B}(L^{p_1}\times\cdots\times L^{p_r},L^p)} c_{n,r,p}\prod_{j=1}^n \Vert P_j\Vert_{L^{p_j}(\mathbb{T}^n)}.
		\end{align*}
		Because the restriction of the periodic multilinear operator $A$ to trigonometric polynomials in every variable is a bounded multilinear operator from $L^{p_1}(\mathbb{R}^n)\times L^{p_2}(\mathbb{R}^n)\times \cdots\times L^{p_r}(\mathbb{R}^n)$ into $L^p(\mathbb{R}^n),$ this restriction admits a unique bounded multilineal extension $L^{p_1}(\mathbb{R}^n)\times L^{p_2}(\mathbb{R}^n)\times\cdots\times  L^{p_r}(\mathbb{R}^n)$ into $L^p(\mathbb{R}^n).$ So, we finish the proof.
	\end{proof}
	
	\begin{theorem} Let $T_m$ be a periodic multilinear Fourier multiplier. Let us assume that the symbol $m$ satisfies the estimates
		
		$$|\Delta_{\xi_1}^{\alpha_{1} }\cdots \Delta_{\xi_r}^{\alpha_{r}  }m(\xi_1,\xi_2,\cdots,\xi_r)|\leq C_\alpha \langle \xi \rangle^{-|\alpha_1|-\cdots -|\alpha_r|},\,\,\,|\alpha|\leq [\frac{nr}{2}]+1.$$
		Then the operator $T_m$ extends to a  bounded multilinear operator from $L^{p_1}(\mathbb{T}^n)\times L^{p_2}(\mathbb{T}^n)\times\cdots \times L^{p_r}(\mathbb{T}^n)$ into $L^p(\mathbb{T}^n),$ provided that 
		\begin{equation*}
		\frac{1}{p_1}+\cdots+\frac{1}{p_r}=\frac{1}{p},\,\,1\leq p_i<\infty.
		\end{equation*}
		
	\end{theorem}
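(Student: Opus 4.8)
The plan is to deduce this periodic estimate from the Euclidean multilinear Coifman--Meyer / Mihlin--H\"ormander theorem together with the transference principle of Theorem \ref{teorema principal2}. First I would regard $m$ as an $x$-independent symbol defined on $\mathbb{Z}^{nr}$, whose hypothesis says precisely that $\sup_{|\alpha|\leq [nr/2]+1}\langle\xi\rangle^{|\alpha|}\,|\Delta_{\xi_1}^{\alpha_1}\cdots\Delta_{\xi_r}^{\alpha_r}m(\xi)|<\infty$. Applying Lemma \ref{IClases} in the $nr$ variables $\xi=(\xi_1,\dots,\xi_r)$, with decay exponent $0$, $\rho=1$, $\delta=0$ and $N_1=[nr/2]+1$, yields a function $a\colon\mathbb{R}^{nr}\to\mathbb{C}$ with $a|_{\mathbb{Z}^{nr}}=m$ and $|\partial_\xi^\alpha a(\xi)|\leq C_\alpha\langle\xi\rangle^{-|\alpha|}$ for all $|\alpha|\leq [nr/2]+1$; taking $\alpha=0$ shows that $a$ is bounded, and $a\in C^{[nr/2]+1}(\mathbb{R}^{nr})$, so $a$ is continuous.

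Next, since $\langle\xi\rangle=\max\{1,|\xi_1|+\cdots+|\xi_r|\}$ one has the elementary inequality $\langle\xi\rangle^{-|\alpha|}\leq(|\xi_1|+\cdots+|\xi_r|)^{-|\alpha|}$ for $\xi\neq 0$, so $a$ satisfies the smooth multilinear Mihlin condition with $[nr/2]+1$ derivatives. As $[nr/2]+1>nr/2$, rescaling $a$ on dyadic annuli $\{2^{k-1}\leq|\xi|\leq 2^{k+1}\}$ and estimating the corresponding Sobolev norms by the controlled derivatives gives $\|a\|_{l.u.,H^s_{loc}(\mathbb{R}^{nr})}<\infty$ for some $s$ with $nr/2<s\leq [nr/2]+1$. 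By the multilinear Coifman--Meyer theorem in its sharp form (Tomita \cite{Tomitabilinear}, Grafakos--Torres \cite{Gra1,Gra2}, as recalled in the Introduction), the Euclidean multiplier operator $T_a$ is bounded from $L^{p_1}(\mathbb{R}^n)\times\cdots\times L^{p_r}(\mathbb{R}^n)$ into $L^p(\mathbb{R}^n)$ whenever $1/p=1/p_1+\cdots+1/p_r$ and $1\leq p_i<\infty$. Finally, because $a|_{\mathbb{Z}^{nr}}=m$ and $e^{i2\pi(x,\xi_1,\dots,\xi_r)}=e^{i2\pi x\cdot(\xi_1+\cdots+\xi_r)}$, the periodic multiplier $T_m$ is exactly the periodization $A$ of $T_a$ appearing in Theorem \ref{teorema principal2}; applying that theorem transfers the Euclidean bound and gives $\|T_m\|_{\mathscr{B}(L^{p_1}(\mathbb{T}^n)\times\cdots\times L^{p_r}(\mathbb{T}^n),L^p(\mathbb{T}^n))}\leq C_p\,\|T_a\|_{\mathscr{B}(L^{p_1}(\mathbb{R}^n)\times\cdots\times L^{p_r}(\mathbb{R}^n),L^p(\mathbb{R}^n))}$, which is the assertion.

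I expect the only genuinely delicate point to be the middle step: matching the \emph{exact} count $[nr/2]+1$ of discrete difference estimates with a hypothesis to which the sharp (rather than the classical) multilinear multiplier theorem can be applied. Concretely, one must pass from the difference inequalities to the continuous derivative inequalities (this is what Lemma \ref{IClases} is for), then convert the pointwise Mihlin bounds into a local uniform Sobolev bound of some order $s>nr/2$, and finally keep track of the admissible range of exponents --- in particular the interplay between the range $1\leq p_i<\infty$ allowed here and the range $1<p<\infty$ under which the transference Theorem \ref{teorema principal2} was established. Once the Euclidean boundedness of $T_a$ is secured, the transference step itself is immediate and requires no additional estimates.
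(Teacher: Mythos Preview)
Your proposal is correct and follows essentially the same route as the paper: extend $m$ to a continuous symbol $a$ on $\mathbb{R}^{nr}$ via Lemma~\ref{IClases}, check that the resulting derivative bounds yield $\Vert a\Vert_{l.u.,H^s_{loc}(\mathbb{R}^{nr})}<\infty$ for some $nr/2<s\leq[nr/2]+1$, invoke Tomita's multilinear H\"ormander theorem for the Euclidean multiplier $T_a$, and then transfer to the torus by Theorem~\ref{teorema principal2}, using $a|_{\mathbb{Z}^{nr}}=m$ so that $T_{a}=T_m$ as periodic operators. Your identification of the delicate step (matching the derivative count to the sharp Sobolev threshold) and of the exponent-range issue in the transference is exactly what the paper's argument relies on as well.
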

	\begin{proof} Let us assume that the multilinear symbol $m$ satisfies:
		$$\Vert m\Vert_{l.u.,H^s(\mathbb{R}^{nr})}:=\sup_{k\in\mathbb{Z}}\Vert m(2^{k}\eta_1
		,2^k\eta_2,\cdots ,2^k\eta_r)\phi\Vert_{H^s}<\infty,$$ $\phi\in \mathscr{D}(0,\infty),$  $s>nr/2.$
		From Tomita multilinear theorem \cite{Tomitabilinear}, this condition implies the boundedness of a multilinear Fourier multiplier  associated with $m$ from $L^{p_1}(\mathbb{R}^n)\times L^{p_2}(\mathbb{R}^n)\times\cdots \times L^{p_r}(\mathbb{R}^n)$ into $L^p(\mathbb{R}^n),$ provided that 
		\begin{equation*}
		\frac{1}{p_1}+\cdots+\frac{1}{p_r}=\frac{1}{p},\,\,1\leq p_i<\infty.
		\end{equation*} So, by applying Theorem \ref{teorema principal2} we deduce the boundedness of the periodic multilinear operator $T_m$ from $L^{p_1}(\mathbb{T}^n)\times L^{p_2}(\mathbb{T}^n)\times\cdots \times L^{p_r}(\mathbb{T}^n)$ into $L^p(\mathbb{T}^n).$  Now,  let us assume that $m$ satisfies \begin{equation*}
		|\Delta_{\xi_1}^{\alpha_1}\Delta_{\xi_2}^{\alpha_2}\cdots \Delta_{\xi_r}^{\alpha_r}m(\xi_1,\cdots,\xi_r)|\leq C_\alpha\langle \xi\rangle^{-|\alpha_1|-\cdots -|\alpha_r|},
		\end{equation*} for all $|\alpha|:=  |\alpha_1|+\cdots +|\alpha_r|\leq [nr/2]+1.$ 
		Now, by using Lemma \ref{IClases}, there exists $\tilde{a}$ such that $m=\tilde{a}|_{  {\mathbb{Z}}^{nr}}$ and 
		\begin{equation*}
		|\partial^\alpha_\xi \tilde{a}(\xi)|\leq C_{\alpha'}\cdot C_\alpha\langle \xi\rangle^{-|\alpha|},\,\,\,|\alpha|\leq [nr/2]+1.
		\end{equation*}
		Now, taking into account that
		\begin{equation*}
		\Vert \tilde{a}\Vert_{l.u.,H^s_{loc}(\mathbb{R}^{nr})}\lesssim \sup_{\xi\in \mathbb{R}^n}\sup_{|\alpha|\leq [nr/2]+1}\langle \xi\rangle^{|\alpha|}|\partial^\alpha_\xi \tilde{a}(\xi)|\leq \sup_{|\alpha|\leq [nr/2]+1} C_{\alpha'}\cdot C_\alpha,
		\end{equation*} for $\frac{nr}{2}<s<[nr/2]+1,$ we deduce
		\begin{equation*}
		\Vert \tilde{a}(\xi)\Vert_{l.u.,H^s_{loc}(\mathbb{R}^n_\xi)}\lesssim \sup_{|\alpha|\leq [nr/2]+1} C_{\alpha'}\cdot C_\alpha<\infty.
		\end{equation*}  Finally, by the first part of the proof, the periodic multilinear pseudo-differential operator $T_{\tilde{a}},$ extends to a bounded operator from $L^{p_1}(\mathbb{T}^n)\times L^{p_2}(\mathbb{T}^n)\times \cdots \times L^{p_r}(\mathbb{T}^n)$ into $L^p( \mathbb{T}^n)$ provided that 
		\begin{equation*}
		\frac{1}{p}=\frac{1}{p_1}+\cdots+ \frac{1}{p_r}.
		\end{equation*} Thus, by using the equality of periodic multilinear pseudo-differential operators $T_{\tilde{a}}=T_{m}$ we conclude the proof. 
	\end{proof}

	The previous multilinear result and the Sobolev embedding theorem give the following theorem where we relax the number of derivatives imposed in the multilinear pseudo-differential theorem proved in the preceding subsection.
	
	\begin{theorem}\label{Teo:2} Let $T_m$ be a periodic multilinear pseudo-differential operator. Let us assume that $m$ satisfies toroidal conditions of the type,
		\begin{equation*}
		|\partial_{x}^\beta\Delta_{\xi_1}^{\alpha_{1} }\cdots \Delta_{\xi_r}^{\alpha_{r}  } m(x,\xi_1,\xi_2,\cdots,\xi_r)|\leq C_\alpha \langle \xi \rangle^{-|\alpha_1|-\cdots -|\alpha_r|},
		\end{equation*} where $|\alpha|\leq [\frac{nr}{2}]+1,$ and $|\beta|\leq [\frac{n}{p}]+1.$ Then $T_m$ extends to a  bounded multilinear operator from $L^{p_1}(\mathbb{T}^n)\times L^{p_2}(\mathbb{T}^n)\times\cdots \times L^{p_r}(\mathbb{T}^n)$ into $L^p(\mathbb{T}^n),$ provided that 
		\begin{equation*}
		\frac{1}{p_1}+\cdots+\frac{1}{p_r}=\frac{1}{p},\,\,1\leq p_i<\infty.
		\end{equation*}
		
	\end{theorem}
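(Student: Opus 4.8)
The plan is to freeze the spatial variable occurring inside the symbol, thereby reducing matters to the $x$-independent multilinear Fourier multiplier theorem proved just above (the one with hypothesis $|\Delta_{\xi_1}^{\alpha_1}\cdots\Delta_{\xi_r}^{\alpha_r}m(\xi)|\le C_\alpha\langle\xi\rangle^{-|\alpha|}$, $|\alpha|\le[\tfrac{nr}{2}]+1$), and then to recover the diagonal by the Sobolev embedding theorem applied in the frozen variable.

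First I would reduce, by density of trigonometric polynomials in each $L^{p_j}(\mathbb{T}^n)$ ($1\le p_j<\infty$), to proving the a priori estimate
\[
\|T_m(f_1,\dots,f_r)\|_{L^p(\mathbb{T}^n)}\lesssim\prod_{j=1}^r\|f_j\|_{L^{p_j}(\mathbb{T}^n)}
\]
for $f_1,\dots,f_r$ trigonometric polynomials, with implied constant depending only on $n,r,p$ and $\sup_{|\alpha|\le[nr/2]+1}C_\alpha$. For such $f$ the series defining $T_mf$ is a finite sum, so I would introduce the auxiliary function
\[
G(x,y):=\sum_{\xi\in\mathbb{Z}^{nr}}e^{i2\pi x\cdot(\xi_1+\cdots+\xi_r)}\,m(y,\xi)\,(\mathscr{F}_{\mathbb{T}^n}f_1)(\xi_1)\cdots(\mathscr{F}_{\mathbb{T}^n}f_r)(\xi_r),\qquad (x,y)\in\mathbb{T}^n\times\mathbb{T}^n,
\]
i.e.\ $G(\cdot,y)=T_{m(y,\cdot)}(f_1,\dots,f_r)$, where for each fixed $y$ the function $\xi\mapsto m(y,\xi)$ is regarded as an $x$-independent symbol on $\mathbb{Z}^{nr}$; by construction $G(x,x)=T_mf(x)$.

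Next, since the difference operators $\Delta_{\xi_j}$ act in $\xi\in\mathbb{Z}^{nr}$ while $\partial_x$ acts in $x\in\mathbb{T}^n$, they commute, and differentiating $G$ under the finite sum gives $\partial_y^\gamma G(\cdot,y)=T_{(\partial_x^\gamma m)(y,\cdot)}(f_1,\dots,f_r)$ for $|\gamma|\le[n/p]+1$; the symbol $\xi\mapsto(\partial_x^\gamma m)(y,\xi)$ satisfies, uniformly in $y$,
\[
\big|\Delta_{\xi_1}^{\alpha_1}\cdots\Delta_{\xi_r}^{\alpha_r}(\partial_x^\gamma m)(y,\xi)\big|=\big|\partial_x^\gamma\Delta_{\xi_1}^{\alpha_1}\cdots\Delta_{\xi_r}^{\alpha_r}m(y,\xi)\big|\le C_\alpha\langle\xi\rangle^{-|\alpha|},\qquad|\alpha|\le[\tfrac{nr}{2}]+1 .
\]
Hence the preceding theorem (the multilinear Mihlin--Tomita theorem for $x$-independent discrete symbols) applies to $(\partial_x^\gamma m)(y,\cdot)$ for each $y$ and yields $\|\partial_y^\gamma G(\cdot,y)\|_{L^p(\mathbb{T}^n)}\le C\prod_{j=1}^r\|f_j\|_{L^{p_j}(\mathbb{T}^n)}$ for all $|\gamma|\le[n/p]+1$, with $C$ independent of $y$. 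Finally, setting $N:=[n/p]+1$ so that $N>n/p$, the Sobolev embedding estimate $\|h\|_{L^\infty(\mathbb{T}^n)}\lesssim\sum_{|\gamma|\le N}\|\partial^\gamma h\|_{L^p(\mathbb{T}^n)}$ applied in $y$ (for fixed $x$, $y\mapsto G(x,y)$ is a trigonometric polynomial) gives
\[
\|T_mf\|_{L^p(\mathbb{T}^n)}^p=\int_{\mathbb{T}^n}|G(x,x)|^p\,dx\le\int_{\mathbb{T}^n}\|G(x,\cdot)\|_{L^\infty(\mathbb{T}^n)}^p\,dx\lesssim\sum_{|\gamma|\le N}\int_{\mathbb{T}^n}\!\int_{\mathbb{T}^n}|\partial_y^\gamma G(x,y)|^p\,dx\,dy,
\]
and since $|\mathbb{T}^n|=1$ the right side is $\lesssim\prod_{j=1}^r\|f_j\|_{L^{p_j}(\mathbb{T}^n)}^p$ by the previous display. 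The density argument then finishes the proof.

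The main obstacle — and the only place where the precise number $[n/p]+1$ of $x$-derivatives is used — is this last step. The more naive route of expanding $m(x,\xi)=\sum_{\eta\in\mathbb{Z}^n}\widehat m(\eta,\xi)e^{i2\pi x\cdot\eta}$ and estimating $\|T_mf\|_{L^p}\le\sum_{\eta\in\mathbb{Z}^n}\|T_{\widehat m(\eta,\cdot)}f\|_{L^p}$ produces a series that converges only when $[n/p]+1>n$, i.e.\ for $p<n/(n-1)$; the diagonal restriction combined with the Sobolev embedding is exactly what makes the threshold $|\beta|\le[n/p]+1$ suffice across the whole admissible range. A minor technical point is the validity of the Sobolev embedding and of the $L^p$-(quasi-)norm manipulations when $p<1$ (which occurs if $\tfrac1{p_1}+\cdots+\tfrac1{p_r}>1$), but this is standard.
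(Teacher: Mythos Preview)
Your proof is correct and follows essentially the same route as the paper: the paper also freezes the symbol's spatial variable to define an auxiliary operator $A_z f(x)$ (your $G(x,z)$), applies the Sobolev embedding $\sup_z|A_zf(x)|^p\lesssim \sum_{|\beta|\le[n/p]+1}\int_{\mathbb{T}^n}|\partial_z^\beta A_zf(x)|^p\,dz$, swaps the order of integration by Fubini, and then invokes the preceding $x$-independent multilinear multiplier theorem for each $\partial_z^\beta A_z$ with uniform bounds in $z$. Your additional remarks on the density reduction, on why the naive Fourier expansion in $x$ of the symbol would be insufficient, and on the quasi-norm issue for $p<1$ are useful clarifications that the paper leaves implicit.
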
 
	
	\begin{proof}
		
		For every $z\in\mathbb{T}^n,$ let us define the multilinear operator
		\begin{equation*}
		A_{z}f(x):=\sum_{\xi\in\mathbb{Z}^{nr}}e^{i2\pi(x,\xi)}a(z,\xi_1,\cdots,\xi_r)\prod_{j=1}^r(\mathscr{F}_{{\mathbb{T}^n}}f_j)(\xi_j).
		\end{equation*} Taking into account the identity $A_xf(x)=T_mf(x),$ an application of the Sobolev embedding theorem gives,
		\begin{align*}
		\Vert T_mf \Vert^p_{L^p(\mathbb{T}^n)}&=\int_{\mathbb{T}^n}|A_xf(x)|^p dx \leq \int_{\mathbb{T}^n}\sup_{z\in\mathbb{T}^n}|A_zf(x)|^pdx\\
		&\lesssim \sum_{|\beta|\leq [\frac{n}{p}]+1} \int_{\mathbb{T}^n}\int_{\mathbb{T}^n} \vert\partial_z^\beta A_zf(x)\vert^p \, dz\,dx \\
		&=\sum_{|\beta|\leq [\frac{n}{p}]+1} \int_{\mathbb{T}^n}\int_{\mathbb{T}^n} \vert\partial_z^\beta A_zf(x)\vert^p \, dx\,dz  \sum_{|\beta|\leq [\frac{n}{p}]+1}\int_{\mathbb{T}^n}\Vert\partial_z^\beta A_zf\Vert^p_{L^p(\mathbb{T}^n)}dz.
		\end{align*} Taking into account that the multilinear operator $\partial_z^\beta A_z,$ has multilinear symbol $\partial_{z}^{\beta} m(z,\cdot),$ satisfying the estimates,
		\begin{equation*}
		|\Delta_{\xi_1}^{\alpha_{1} }\cdots \Delta_{\xi_r}^{\alpha_{r}  } [\partial_{z}^\beta m](z,\xi_1,\xi_2,\cdots,\xi_r)|\leq C_\alpha \langle \xi \rangle^{-|\alpha_1|-\cdots -|\alpha_r|},
		\end{equation*} where $|\alpha|\leq [\frac{nr}{2}]+1,$ we can deduce the boundedness of every operator, $\partial_z^\beta A_z$ from $L^{p_1}(\mathbb{T}^n)\times L^{p_2}(\mathbb{T}^n)\times\cdots \times L^{p_r}(\mathbb{T}^n)$ into $L^p(\mathbb{T}^n),$ $
		\frac{1}{p_1}+\cdots+\frac{1}{p_r}=\frac{1}{p},\,\,1\leq p_i<\infty,
		$ with operator norm uniformly bounded in $z\in \mathbb{T}^{n}.$ So, we have,
		\begin{align*}
		\Vert T_mf \Vert^p_{L^p(\mathbb{T}^n)}\lesssim \sum_{|\beta|\leq [\frac{n}{p}]+1}\sup_{z\in\mathbb{T}^n}\Vert \partial_z^\beta A_z \Vert^p_{\mathscr{B} (  L^{p_1}\times \cdots\times L^{p_r},L^p  )   }  \Vert f\Vert^p_{L^{p_1}(\mathbb{T}^n)\times \cdots\times L^{p_r}(\mathbb{T}^n )}.
		\end{align*} Thus, we conclude the proof.
	\end{proof}
	 We end our analysis with the following application to PDEs.
	 \begin{remark}[Kato-Ponce inequality on the torus]\label{limitedregularity'} Kato-Ponce inequalities are estimates of the form,
	 \begin{equation}
    \Vert \mathcal{J}^s(f \cdot g)\Vert_{L^r(\mathbb{R}^n)}\lesssim \Vert \mathcal{J}^s f\Vert_{L^{p_1}(\mathbb{R}^n)}\Vert g\Vert_{L^{q_1}(\mathbb{R}^n)}+\Vert  f\Vert_{L^{p_2}(\mathbb{R}^n)}\Vert \mathcal{J}^s g\Vert_{L^{q_2}(\mathbb{R}^n)}
\end{equation}	
	where $\frac{1}{p_1}+\frac{1}{q_1}=\frac{1}{p_2}+\frac{1}{q_2}=\frac{1}{r}.$ Usually,   we can take $\mathcal{J}^s=(\frac{1}{4\pi^2}\Delta_x)^{s/2}$ or $\mathcal{J}^s=(1+\frac{1}{4\pi^2}\Delta_x)^{s/2}$ where $\Delta_x=-\sum_{j=1}^n{\partial_{x_j}^2}$ is the Laplacian on $\mathbb{R}^n$. As it was pointed out in Grafakos \cite{GrafakosPDE's}, Kato and Ponce used this estimate to obtain commutator
estimates for the Bessel operator which in turn they applied to obtain estimates for
the Euler and Navier-Stokes equations.
	
	The aim of this remark is to deduce the periodic Kato-Ponce inequality  \eqref{periodickatoponce} (see Muscalu and Schlag \cite{Muscaluperiodic}) from Theorem \ref{Teoremapseuod'}. For this, we follow Grafakos \cite{GrafakosPDE's}. Consider the bilinear operator
	\begin{equation}
	    B_s(f,g)=J^s(f\cdot g),\, f,g\in \mathscr{D}(\mathbb{T}^n),\,\,s>0,
	\end{equation} with $J^s=(\mathcal{L})^\frac{s}{2}$ or $J^s=(I+\mathcal{L})^{\frac{s}{2}}.$ Here,  $\mathcal{L}:=\Delta_{\mathbb{T}^n}=-\frac{1}{4\pi^{2}}(\sum_{j=1}^n\partial_{\theta_j}^2),$  denotes the Laplacian on the torus  $\mathbb{T}^n.$ 
	 \end{remark} If $s>0$ and  $J^s=(\mathcal{L})^\frac{s}{2},$ then the operator $B_s$ has the form
	 \begin{equation}
	     B_s(f,g)(x)=J^s(f\cdot g)(x)=\sum_{\xi\in\mathbb{Z}^n}e^{i2\pi x\cdot\xi}|\xi|^s\mathscr{F}_{\mathbb{T}^n}(f\cdot g)(\xi),\,\,x\in\mathbb{T}^n.
	 \end{equation}
	Since $ f(x)\cdot g(x)=\mathscr{F}^{-1}_{\mathbb{T}^n}[ \mathscr{F}_{\mathbb{T}^n}f\ast  \mathscr{F}_{\mathbb{T}^n}g](x) , $ we have that $\mathscr{F}_{\mathbb{T}^n}(f\cdot g)=\mathscr{F}_{\mathbb{T}^n}f\ast  \mathscr{F}_{\mathbb{T}^n}g,$ and consequently,
	\begin{equation}
	 \sum_{\xi\in\mathbb{Z}^n}e^{i2\pi x\cdot\xi}|\xi|^s\mathscr{F}_{\mathbb{T}^n}(f\cdot g)(\xi)=\sum_{\xi,\eta\in\mathbb{Z}^n}e^{i2\pi x\cdot(\xi+\eta)}|\xi+\eta|^s\mathscr{F}_{\mathbb{T}^n}(f)(\xi) \mathscr{F}_{\mathbb{T}^n}(g)(\eta).  
	\end{equation} We now note that  $B_s$ is a periodic bilinear operator associated to symbol $\sigma(\xi,\eta)=|\xi+\eta|^s.$ If $\phi\in \mathscr{D}(\mathbb{R})$ is compactly supported in $[-2,2]$ with $\phi(t)=1$ for $0\leq t\leq 1,$ we can split the bilinear symbol $\sigma$ as
	\begin{equation}
	    \sigma(\xi,\eta)=\left(\frac{|\xi+\eta|^s}{|\xi|^s}(1-\phi)(|\xi|/|\eta|)   \right)|\xi|^s+\left(  \frac{|\xi+\eta|^s}{|\eta|^s}(\phi)(|\xi|/|\eta )\right)|\eta|^s.
	\end{equation} Putting
	\begin{equation}
	    \tau(\xi,\eta):=\frac{|\xi+\eta|^s}{|\xi|^s}(1-\phi)(|\xi|/|\eta|),\,\,\,\alpha(\xi,\eta):=  \frac{|\xi+\eta|^s}{|\eta|^s}(\phi)(|\xi|/|\eta ),
	\end{equation}
	
	we obtain 
	\begin{equation}
	    B_s(f,g)=T_\tau(J^sf,g)+T_\alpha(f,J^s g).
	\end{equation} Since the bilinear symbols $\tau(\xi,\eta)=\tau(x,\xi,\eta)$ and $\alpha(\xi,\eta)=\alpha(x,\xi,\eta)$ satisfy the hypothesis of Theorem \ref{Teoremapseuod'},
		\begin{equation*}
		\sup_{x\in\mathbb{T}^n}|\Delta_{\xi_1}^{\alpha_1}\Delta_{\xi_2}^{\alpha_2}\cdots \Delta_{\xi_r}^{\alpha_r}m(x,\xi_1,\cdots,\xi_r)|\leq C_{\phi, m,\alpha}\langle \xi\rangle^{-|\alpha|},\,\,\,m=\alpha,\textnormal{  or  }m=\tau,
		\end{equation*} 
	for all $|\alpha|\leq 3n+1$, we obtain the periodic Kato-Ponce inequality,
	\begin{equation}\label{periodickatoponce}
    \Vert J^s( f \cdot g)\Vert_{L^r(\mathbb{T}^n)}\lesssim \Vert J^s f\Vert_{L^{p_1}(\mathbb{T}^n)}\Vert g\Vert_{L^{q_1}(\mathbb{T}^n)}+\Vert  f\Vert_{L^{p_2}(\mathbb{T}^n)}\Vert J^s g\Vert_{L^{q_2}(\mathbb{T}^n)}
\end{equation}	
	where $\frac{1}{p_1}+\frac{1}{q_1}=\frac{1}{p_2}+\frac{1}{q_2}=\frac{1}{r},$  $1<r<\infty,$  $1\leq p_i,q_i\leq \infty.$ A similar analysis can be applied when $J^s=(I+\mathcal{L})^{\frac{s}{2}}.$

	\subsection{Boundedness of multilinear pseudo-differential operators on $\mathbb{Z}^n$}

	In this subsection we provide some results on the boundedness of discrete pseudo-differential operators in the multilinear context. By following Botchway  Kibiti and  Ruzhansky \cite{Ruzhansky}, a symbol $\sigma$ belongs to the discrete H\"ormander class $S^m_{\rho,\delta}(\mathbb{Z}^n\times \mathbb{T}^n),$ if it satisfies discrete inequalities of the type
	\begin{equation*}
	|D_{\xi}^{(\beta)}\Delta_\ell^\alpha \sigma(\ell,\xi)|\leq C_{\alpha,\beta}\langle \ell \rangle^{m-\rho|\alpha|+\delta|\beta|}.
	\end{equation*}
	Taking as starting point this definition, we will investigate the $L^p$ boundedness of discrete multilinear pseudo-differential operators with multilinear symbols satisfying inequalities of the type.
	
	\begin{equation*}
	|\partial_{\xi}^{\beta}\Delta_\ell^\alpha \sigma(\ell,\xi)|\leq C_{\alpha,\beta}\langle \ell \rangle^{m-\rho|\alpha|+\delta|\beta|},\ell\in \mathbb{Z}^{n},\,\,\xi\in\mathbb{T}^{nr}.
	\end{equation*}
	
	Now, we present the following discrete boundedness theorem.
	
	\begin{theorem}
		Let  $\sigma\in L^\infty(\mathbb{Z}^{n}, C^{2\varkappa} (\mathbb{T}^{nr})).$ Let us assume that $\sigma$ satisfies the following discrete inequalitiess
		\begin{equation*}
		|\partial_{\xi}^{\beta} \sigma(\ell,\xi)|\leq C_{\beta},\,\,\ell\in \mathbb{Z}^{n},\,\,\xi\in\mathbb{T}^{nr},\,\sigma(\ell,\xi)=\sigma(\ell) (\xi) ,
		\end{equation*} for all $\beta$ with $|\beta|= 2\varkappa.$ Then $T_\sigma$ extends to a bounded operator from $L^{p_1}(\mathbb{Z}^n)\times L^{p_2}(\mathbb{Z}^n)\times \cdots\times L^{p_r}(\mathbb{Z}^n )$ into $L^{s}(\mathbb{Z}^n )$ provided that $1\leq p_j\leq p \leq\infty,$ and $$ \frac{1}{s}-\frac{1}{p}<\frac{2\varkappa}{nr}-1.$$

	\end{theorem}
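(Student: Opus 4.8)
The plan is to reduce the discrete multilinear estimate to an $\ell^p$-Young-type inequality by exploiting the fact that, for fixed $\ell\in\mathbb{Z}^n$, the map $\xi\mapsto\sigma(\ell,\xi)$ is a smooth function on the compact torus $\mathbb{T}^{nr}$, so its Fourier coefficients decay at the rate dictated by its $C^{2\varkappa}$-regularity. Concretely, I would first write, for $g=(g_1,\dots,g_r)$ in the appropriate product of Schwartz spaces on $\mathbb{Z}^n$,
\begin{equation*}
T_\sigma(g)(\ell)=\int_{\mathbb{T}^{nr}}e^{i2\pi\ell\cdot(\eta_1+\cdots+\eta_r)}\sigma(\ell,\eta)\prod_{j=1}^r(\mathscr{F}_{\mathbb{Z}^n}g_j)(\eta_j)\,d\eta,
\end{equation*}
and expand $\sigma(\ell,\cdot)$ in its Fourier series on $\mathbb{T}^{nr}$: $\sigma(\ell,\eta)=\sum_{k\in\mathbb{Z}^{nr}}\widehat{\sigma}(\ell,k)\,e^{i2\pi k\cdot\eta}$, where $k=(k_1,\dots,k_r)$ and $\widehat{\sigma}(\ell,k)$ denotes the periodic Fourier coefficient in the $\eta$-variable. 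Substituting and using the inversion formula $g_j(\ell_j)=\int_{\mathbb{T}^n}e^{i2\pi\ell_j\cdot\eta_j}(\mathscr{F}_{\mathbb{Z}^n}g_j)(\eta_j)\,d\eta_j$ collapses each $\eta_j$-integral to a translation, yielding a pointwise identity of the form
\begin{equation*}
T_\sigma(g)(\ell)=\sum_{k\in\mathbb{Z}^{nr}}\widehat{\sigma}(\ell,k)\prod_{j=1}^r g_j(\ell+k_j).
\end{equation*}

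The second step is to estimate the coefficients. Integration by parts $2\varkappa$ times in $\eta$ against the hypothesis $|\partial_\xi^\beta\sigma(\ell,\xi)|\le C_\beta$ for $|\beta|=2\varkappa$ gives the uniform bound $|\widehat{\sigma}(\ell,k)|\lesssim \langle k\rangle^{-2\varkappa}$, with the implied constant independent of $\ell$; here $\langle k\rangle=(1+|k_1|^2+\cdots+|k_r|^2)^{1/2}$. Now I take $L^s(\mathbb{Z}^n)$ norms in $\ell$. Using the triangle inequality in $\ell^s$ (for $s\ge 1$) and then Hölder's inequality with $\tfrac1p=\tfrac1{p_1}+\cdots+\tfrac1{p_r}$ — note $\tfrac1s\le\tfrac1p$ forces $s\ge p$, which together with $p_j\le p$ is consistent — and the translation invariance of the $\ell^{p_j}$-norms, one controls each summand by $C\langle k\rangle^{-2\varkappa}\prod_j\|g_j\|_{\ell^{p_j}}$; actually, since $s\ge p$, one should first use the embedding $\ell^p(\mathbb{Z}^n)\hookrightarrow\ell^s(\mathbb{Z}^n)$ to pass from $L^s$ to $L^p$, then apply the multilinear Hölder inequality on $\mathbb{Z}^n$. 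Summing over $k\in\mathbb{Z}^{nr}$ then requires $\sum_{k\in\mathbb{Z}^{nr}}\langle k\rangle^{-2\varkappa}<\infty$, i.e. $2\varkappa>nr$. The sharper condition $\tfrac1s-\tfrac1p<\tfrac{2\varkappa}{nr}-1$ should come from interpolating this crude estimate (valid when $2\varkappa>nr$, i.e. when $\tfrac1s-\tfrac1p$ can be taken as large as we wish up to the constraint) against the trivial $L^\infty$-type or $L^p\to L^p$ bound, optimizing the exponent in the summation $\sum_k\langle k\rangle^{-\theta\cdot 2\varkappa}$ so that convergence holds iff $\theta\cdot 2\varkappa>nr$ and matching $\theta$ to the gap $\tfrac1s-\tfrac1p$ via the mapping $\ell^p(\mathbb{Z}^n)\hookrightarrow\ell^s(\mathbb{Z}^n)$ composed with a Hausdorff–Young/Young convolution step.

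The main obstacle I anticipate is extracting the precise endpoint condition $\tfrac1s-\tfrac1p<\tfrac{2\varkappa}{nr}-1$ rather than merely $2\varkappa>nr$: this demands organizing the sum over $k$ so that the decay $\langle k\rangle^{-2\varkappa}$ is partially "spent" on improving integrability (trading $\ell^p$ for $\ell^s$ with $s>p$ costs a factor that grows with $|k|$ after translation and truncation) and partially on summability. The cleanest route is probably to dyadically decompose $\mathbb{Z}^{nr}=\bigsqcup_{N}\{|k|\sim 2^N\}$, estimate the contribution of each shell both in $L^p\to L^p$ (gaining $2^{-N\cdot 2\varkappa}\cdot 2^{Nnr}$ from the number of lattice points) and in a weaker norm, and sum the geometric-type series — convergence of $\sum_N 2^{N(nr-2\varkappa)}2^{Nnr(\frac1s-\frac1p)}$ is exactly $\tfrac1s-\tfrac1p<\tfrac{2\varkappa}{nr}-1$. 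Everything else — the Fourier-series identity, the integration-by-parts bound on $\widehat{\sigma}(\ell,k)$, and the multilinear Hölder step — is routine, and the hypothesis $\sigma\in L^\infty(\mathbb{Z}^n,C^{2\varkappa}(\mathbb{T}^{nr}))$ is used precisely to make the coefficient bound uniform in $\ell$ so that the translations $g_j(\cdot+k_j)$ can be pulled out with no loss.
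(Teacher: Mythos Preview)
Your kernel representation and the integration-by-parts bound $|\widehat{\sigma}(\ell,k)|\lesssim\langle k\rangle^{-2\varkappa}$ are exactly what the paper does, modulo the change of variables $k=\ell'-\tilde\ell$ (the paper writes the operator as $\sum_{\ell'\in\mathbb{Z}^{nr}}\kappa(\tilde\ell,\ell')f(\ell')$ and bounds $|\kappa(\tilde\ell,\ell')|\lesssim\langle\tilde\ell-\ell'\rangle^{-2\varkappa}$). The divergence is in the last step, and here the paper is much more direct: it recognises $|T_\sigma f(\ell)|\leq(\langle\cdot\rangle^{-2\varkappa}*|f|)(\tilde\ell)$ as a convolution on $\mathbb{Z}^{nr}$ restricted to the diagonal, and applies Young's inequality $\|h*f\|_{L^s(\mathbb{Z}^{nr})}\leq\|h\|_{L^q(\mathbb{Z}^{nr})}\|f\|_{L^p(\mathbb{Z}^{nr})}$ with $1+\tfrac1s=\tfrac1p+\tfrac1q$. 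The convergence condition $\langle\cdot\rangle^{-2\varkappa}\in\ell^q(\mathbb{Z}^{nr})$ is $2\varkappa q>nr$, i.e.\ $\tfrac1s-\tfrac1p<\tfrac{2\varkappa}{nr}-1$, in one line.

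Two points in your plan need correction. First, your invocation of H\"older with $\tfrac1p=\sum_j\tfrac1{p_j}$ misreads the hypothesis: the theorem's $p$ is a free parameter with $p_j\leq p$, not the H\"older conjugate of the $p_j$'s. The paper handles this differently, using the product structure $\|f\|_{L^p(\mathbb{Z}^{nr})}=\prod_j\|f_j\|_{L^p(\mathbb{Z}^n)}$ (Fubini, since $f(\ell')=\prod_j f_j(\ell'_j)$) followed by the embedding $\ell^{p_j}(\mathbb{Z}^n)\hookrightarrow\ell^p(\mathbb{Z}^n)$; no H\"older on $\mathbb{Z}^n$ is used. Second, your explanation of the dyadic factor $2^{Nnr(1/s-1/p)}$ is wrong: the embedding $\ell^p\hookrightarrow\ell^s$ (for $s\geq p$) is a contraction and introduces no $N$-dependence. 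That factor is really $\|h_N\|_{L^q}/\|h_N\|_{L^1}\asymp (2^{Nnr})^{1/q-1}=2^{Nnr(1/s-1/p)}$, i.e.\ you are reproving Young's inequality shell by shell. Your dyadic bookkeeping happens to land on the right exponent, but the mechanism you cite does not produce it; just apply Young directly and the interpolation/dyadic layer disappears.
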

	\begin{proof}
		For a compactly supported function $f=(f_1,f_2,\cdots,f_r),$ we have 
		\begin{align*}
		T_\sigma f(\ell) &=\int\limits_{\mathbb{T}^{nr}}e^{i2\pi \ell(\xi_1+\cdots+\xi_r)}\sigma(\ell,\xi)(\mathscr{F}_{\mathbb{Z}^n}{f}_{1})(\xi_1)\cdots (\mathscr{F}_{\mathbb{Z}^n}{f}_{r})(\xi_r)d\xi,\\
		&=\sum_{\ell'\in\mathbb{Z}^{nr}}\left( \int\limits_{\mathbb{T}^{nr}}e^{i2\pi \ell(\xi_1+\cdots+\xi_r)-\ell'\cdot \xi}\sigma(\ell,\xi)d\xi   \right)f_1(\ell_1')\cdots f_r(\ell_r')\\
		&=\sum_{\ell'\in\mathbb{Z}^{nr}}\left( \int\limits_{\mathbb{T}^{nr}}e^{i2\pi( \tilde{\ell}-\ell')\cdot \xi}\sigma(\ell,\xi)d\xi   \right)f_1(\ell_1')\cdots f_r(\ell_r')\\
		&=\sum_{\ell'\in\mathbb{Z}^{nr}}\left( \int\limits_{\mathbb{T}^{nr}}e^{i2\pi( \tilde{\ell}-\ell')\cdot \xi}\sigma(\ell,\xi)d\xi   \right)f_1(\ell_1')\cdots f_r(\ell_r')\\
		&=\sum_{\ell'\in\mathbb{Z}^{nr}}\kappa(\tilde{\ell},\ell')f_1(\ell_1')\cdots f_r(\ell_r'),
		\end{align*}
		where $\kappa(\tilde{\ell},\ell')= \int\limits_{\mathbb{T}^{nr}}e^{i2\pi( \tilde{\ell}-\ell')\cdot \xi}\sigma(\ell,\xi)d\xi   ,$ and $\tilde{\ell}=(\ell,\ell,\cdots,\ell)$ is the notation used in the previous subsection. Now, by applying integration by parts, we get
		\begin{equation*}
		\int\limits_{\mathbb{T}^{nr}}e^{i2\pi( \tilde{\ell}-\ell')\cdot \xi}\sigma(\ell,\xi)d\xi=\langle \tilde{\ell}-\ell' \rangle^{-2\varkappa} \int\limits_{\mathbb{T}^{nr}}e^{i2\pi( \tilde{\ell}-\ell')\cdot \xi}(I-\frac{1}{4\pi^2}\mathcal{L}_{\mathbb{T}^{nr}})^{\varkappa}[\sigma](\ell,\xi)d\xi. 
		\end{equation*}
		Consequently, we have the estimate,
		\begin{equation*}
		|\kappa(\tilde{\ell},\ell')|\leq C_{\varkappa}\langle \tilde{\ell}-\ell' \rangle^{-2\varkappa}.
		\end{equation*} So, if $f(\ell') =f_1(\ell_1')\cdots f_r(\ell_r'),$ for every $\ell\in\mathbb{Z}^n,$
		\begin{align*}
		|T_\sigma f(\ell)|&\lesssim \sum_{\ell'\in\mathbb{Z}^{nr}}\langle \tilde{\ell}-\ell' \rangle^{-2\varkappa}f(\ell')=(\langle \tilde{\ell}-\cdot \rangle^{-2\varkappa}\ast|f|)(\ell),
		\end{align*} where $\ast$ is the convolution on sequences. By the Hausdorff-Young inequality, we have the inequality
		$$ \Vert T_\sigma f \Vert_{L^s(\mathbb{Z}^n)}\leq C\Vert \langle \tilde{\ell}-\cdot \rangle^{-2\varkappa} \Vert_{L^q(\mathbb{Z}^{nr})}\Vert f\Vert_{L^p(\mathbb{Z}^{nr})}=\Vert \langle \ell' \rangle^{-2\varkappa} \Vert_{L^q(\mathbb{Z}^{nr})}\prod_{j=1}^{r}\Vert f_j\Vert_{L^p(\mathbb{Z}^{n})}, $$ for $1+\frac{1}{s}=\frac{1}{p}+\frac{1}{q}.$ We require $2q\varkappa >nr$ or equivalently $q>\frac{nr}{2\varkappa},$ in order that $\Vert\langle \ell \rangle^{-2\varkappa} \Vert_{L^q(\mathbb{Z}^{nr})}<\infty.$ So, by the hypothesis  $\frac{1}{s}-\frac{1}{p}<\frac{2\varkappa}{nr}-1,$ we can take $q=(\frac{1}{s}-\frac{1}{p}+1)^{-1}$ in the previous inequality.
		Now, by taking into account that
		$$ \prod_{j=1}^{r}\Vert f_j\Vert_{L^p(\mathbb{Z}^{n})}\leq \prod_{j=1}^{r}\Vert f_j\Vert_{L^{p_j}(\mathbb{Z}^{n})} ,\,\, p_j\leq p\leq\infty, $$ we finish the proof.
	\end{proof}
	
	As a consequence of the previous result, for $r=1$ and $s=p$  we obtain the following $L^p$-estimate for discrete pseudo-differential operators.
	\begin{corollary}
		Let  $\sigma\in C^{2\varkappa}(\mathbb{Z}^{n}\times \mathbb{T}^{n}).$ Let us assume that $\sigma$ satisfies the following discrete inequalities
		\begin{equation*} \label{derivedcor}
		|\partial_{\xi}^{\beta} \sigma(\ell,\xi)|\leq C_{\beta},\,\,\ell\in \mathbb{Z}^{n},\,\,\xi\in\mathbb{T}^{n},  
		\end{equation*} for all $\beta$ with $|\beta|= 2\varkappa.$ Then $T_\sigma$ extends to a bounded operator from $L^{p}(\mathbb{Z}^n))$ into $L^{p}(\mathbb{Z}^n )$ provided that $1\leq p\leq \infty,$ and $\varkappa>n/2.$
		\end{corollary}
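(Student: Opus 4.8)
The plan is to obtain this statement as the degenerate case $r=1$, $s=p$ of the preceding theorem. First I would check that the hypotheses of that theorem are met: since $\mathbb{T}^{n}$ is compact, the single top-order bound $|\partial_{\xi}^{\beta}\sigma(\ell,\xi)|\leq C_{\beta}$ for $|\beta|=2\varkappa$, combined with $\sigma\in C^{2\varkappa}(\mathbb{Z}^{n}\times\mathbb{T}^{n})$, controls every $\xi$-derivative of order $\leq 2\varkappa$ uniformly in $\ell$, so that $\sigma\in L^{\infty}(\mathbb{Z}^{n},C^{2\varkappa}(\mathbb{T}^{nr}))$ with $r=1$. Then I would apply the theorem with $p_{1}=p$ and target exponent $s=p$: the admissibility condition $\tfrac{1}{s}-\tfrac{1}{p}<\tfrac{2\varkappa}{nr}-1$ collapses to $0<\tfrac{2\varkappa}{n}-1$, that is $\varkappa>n/2$, which is exactly the assumed restriction, while $1\leq p_{1}\leq p\leq\infty$ is automatic. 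This yields the $L^{p}(\mathbb{Z}^{n})$-boundedness of $T_{\sigma}$.

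Alternatively, and perhaps more transparently, I would simply rerun the proof of the theorem in this degenerate case. Writing $T_{\sigma}f(\ell)=\sum_{\ell'\in\mathbb{Z}^{n}}\kappa(\ell,\ell')f(\ell')$ with $\kappa(\ell,\ell')=\int_{\mathbb{T}^{n}}e^{i2\pi(\ell-\ell')\cdot\xi}\sigma(\ell,\xi)\,d\xi$, integration by parts against $(I-\tfrac{1}{4\pi^{2}}\mathcal{L}_{\mathbb{T}^{n}})^{\varkappa}$ gives $|\kappa(\ell,\ell')|\lesssim\langle\ell-\ell'\rangle^{-2\varkappa}$; when $\varkappa>n/2$ the majorant $\langle\,\cdot\,\rangle^{-2\varkappa}$ lies in $\ell^{1}(\mathbb{Z}^{n})$, and Young's convolution inequality bounds $T_{\sigma}$ on $L^{p}(\mathbb{Z}^{n})$ for every $1\leq p\leq\infty$. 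In effect this is the $q=1$ endpoint of the Hausdorff--Young step used in the theorem.

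There is essentially no obstacle here; the only point meriting a line of justification is the symbol-class membership, namely that compactness of the torus upgrades the top-order estimate to control of the full $C^{2\varkappa}$-norm uniformly in $\ell\in\mathbb{Z}^{n}$. Finally I would add the remark that, since the discrete H\"ormander class $S^{0}_{0,0}(\mathbb{Z}^{n}\times\mathbb{T}^{n})$ of Botchway, Kibiti and Ruzhansky consists precisely of symbols all of whose $\xi$-derivatives are bounded, this corollary in particular recovers the $L^{p}(\mathbb{Z}^{n})$-boundedness of every pseudo-differential operator with symbol in $S^{0}_{0,0}(\mathbb{Z}^{n}\times\mathbb{T}^{n})$ once $2\varkappa>n$ derivatives are available.
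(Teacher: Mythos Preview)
Your proposal is correct and takes exactly the same approach as the paper: the paper's entire proof is the one-line remark preceding the corollary, ``As a consequence of the previous result, for $r=1$ and $s=p$ we obtain the following $L^p$-estimate,'' and you carry out precisely that specialization, checking that the condition $\tfrac{1}{s}-\tfrac{1}{p}<\tfrac{2\varkappa}{nr}-1$ collapses to $\varkappa>n/2$. Your alternative direct argument via the kernel bound and Young's inequality is simply the proof of Theorem~3.10 rerun with $r=1$, $q=1$, so it adds no new ingredient. One small caveat: your claim that compactness of $\mathbb{T}^n$ upgrades the top-order bound to uniform control of all lower-order $\xi$-derivatives is not correct as stated (e.g.\ $\sigma(\ell,\xi)=\ell$ has vanishing $\xi$-derivatives but is unbounded); the intended reading is that $\sigma\in C^{2\varkappa}(\mathbb{Z}^n\times\mathbb{T}^n)$ already means $\sigma\in L^\infty(\mathbb{Z}^n,C^{2\varkappa}(\mathbb{T}^n))$, which is what the theorem actually requires.
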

	Let us mention  that Corollary \ref{derivedcor} implies the $L^p$-boundedness of pseudo-differential operators associated to the discrete H\"ormander class $S^0_{0,0}(\mathbb{Z}^n\times \mathbb{T}^n )$ introduced in Botchway, Kibiti and Ruzhansky \cite{Ruzhansky}.  \\
	\\
	
	\section{Nuclear multilinear integral  operators on $L^p(\mu)$-spaces}
	
	We are interested in the nuclearity of multilinear pseudo-differential operators on $\mathbb{Z}^n$ and the torus $\mathbb{T}^n$. The study of the nuclear pseudo-differential operators mainly depends upon a characterization of nuclear operators given by Delgado \cite[Theorem 2.4]{Delgado}. Therefore, it is clear that for studying the multilinear case we need to develop  Delgado's  result in the multilinear setting. In this section we prove a multilinear version of Delgado theorem.   
	
	We recall the definition of a nuclear operator on a Banach space. Let $E$ and $F$ be two Banach spaces. A linear operator $T:E \rightarrow F$ is called {\it $s$-nuclear}, $0<s \leq 1,$ if there exist sequences $\{g_n\}_n \subset E'$ and $\{h_n\}_n \subset F$ such that $ \sum_{n} \|g_n\|_{E'}^s\, \|h_n\|_{F}^s<\infty$ and for every $f \in E,$ we have $$Tf= \sum_{n} \langle f, g_n \rangle h_n.$$
	If $s=1$ then $T$ is called {\it nuclear} operator. 
	
	Let $(X_i,\mu_i),\, 1 \leq i \leq r$ be $\sigma$-finite measure spaces. At times, we write $$\int_{X_1} \int_{X_2} \cdots \int_{X_r}f(x_1,x_2,\cdots,x_r) d\mu(x_1)\, d\mu(x_2), \cdots, d\mu(x_r)$$ as $$\int_{X_1.X_2.\ldots,X_r} f(x)d(\mu_1 \otimes \mu_2\otimes \cdots \otimes \mu_r)(x),$$ where $x= (x_1,x_2, \cdots, x_r) \in X_1 \times X_2 \times \ldots \times X_r.$  We shall also use the notation
	\begin{equation}
	\Vert f\Vert_{L^{p_1}(\mu_1)\times \cdots \times L^{p_r}(\mu_r)  }= \Vert \cdots\Vert  \Vert f(\cdot,x_2,\cdots,x_r)\Vert_{L^{p_1}(\mu_1)}\Vert_{L^{p_2}(\mu_2)}\cdots \Vert_{L^{p_r}(\mu_r)}.
	\end{equation} for the mixed norms. In particular, if $f$ has the form $f(x)=\prod_{j}f_j(x_j),$ then
	$$ \Vert f\Vert_{L^{p_1}(\mu_1)\times \cdots \times L^{p_r}(\mu_r)  }=\prod_{j}\Vert f_j\Vert_{L^{p_j}(\mu_j)}. $$

	We begin with the following important lemma. 
	\begin{lemma} \label{FDel}
		Let $(X_i, \mu_i), 1 \leq i \leq r$ and $(Y, \nu)$ be finite measure spaces. Let $1 \leq p_i,p <\infty, 1 \leq i \leq r$ and let $p_i', q$ be such that $\frac{1}{p_i}+ \frac{1}{p_i'}=1, \frac{1}{p}+\frac{1}{q}=1$ for $1 \leq i \leq r.$ Suppose $f \in L^{p_1}(\mu_1) \times L^{p_2}(\mu_2) \times \cdots \times L^{p_r}(\mu_r),$ where $f= (f_1,f_2,\ldots, f_r)$ such that each $f_i \in L^{p_i}(\mu_i),$ and $\{g_n\}_n$ with $g_n=(g_{n1}, g_{n2}, \ldots, g_{nr})$ and $\{h_n\}_n$ are two sequences in $L^{p_1'}(\mu_1) \times L^{p_2'}(\mu_2) \times \cdots \times L^{p_r'}(\mu_r)$ and $L^p(\nu)$ respectively such that $\sum_{n} \|g_n\|_{L^{p_1'}(\mu_1) \times L^{p_2'}(\mu_2) \times \cdots \times L^{p_r'}(\mu_r)} \|h_n\|_{L^p(\nu)}<\infty.$ Then 
		\begin{itemize}
			\item[(i)] $\lim_{n} \sum_{j=1}^n g_j(x) h_j(y)$ is finite for almost all $(x,y) \in X_1 \times X_2 \times \ldots \times X_r \times Y$ and $\sum_{j=1}^\infty g_j(x) h_j(y)$ is absolutely convergent for almost all $(x,y).$
			\item[(ii)] $k \in L^1(\mu_1 \otimes \mu_2 \otimes \cdots \otimes \mu_r \otimes \nu),$ where $k(x,y)= \sum_{j=1}^\infty g_j(x)\, h_j(y).$
			\item[(iii)] If $k_n(x,y)= \sum_{j=1}^n g_j(x)\, h_j(y)$ then $ \lim_{n} \|k_n-k\|_1=0. $
			\item[(iv)] $$\lim_{n} \int_{X_1,X_2, \ldots,X_r} \left( \sum_{j=1}^n g_j(x)\,h_j(y) \right)f(x) d(\mu)(x) $$ 
			$$ =\int_{X_1,X_2, \ldots,X_r} \left( \sum_{n=1}^\infty g_n(x)\,h_n(y) \right)f(x) d(\mu)(x),$$ where $\mu = \mu_1 \otimes \mu_2 \otimes\cdots \otimes \mu_r$
		\end{itemize}
	\end{lemma}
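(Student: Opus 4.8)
The plan is to reduce the claims to the scalar setting by exploiting that all the measure spaces are finite, and then to run the Fubini--Tonelli plus dominated convergence scheme behind Delgado's argument. Throughout, $g_n(x)=g_{n1}(x_1)\cdots g_{nr}(x_r)$ is a tensor product, so its mixed norm factors; write $\gamma_n:=\|g_n\|_{L^{p_1'}(\mu_1)\times\cdots\times L^{p_r'}(\mu_r)}=\prod_{j=1}^r\|g_{nj}\|_{L^{p_j'}(\mu_j)}$. Applying H\"older's inequality on each factor $(X_j,\mu_j)$, together with $\mu_j(X_j)<\infty$, gives
\begin{equation*}
\|g_n\|_{L^1(\mu_1\otimes\cdots\otimes\mu_r)}=\prod_{j=1}^r\|g_{nj}\|_{L^1(\mu_j)}\leq\Big(\prod_{j=1}^r\mu_j(X_j)^{1/p_j}\Big)\gamma_n,
\end{equation*}
and similarly $\|h_n\|_{L^1(\nu)}\leq\nu(Y)^{1/q}\|h_n\|_{L^p(\nu)}$. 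Hence, with $C_0:=\nu(Y)^{1/q}\prod_{j=1}^r\mu_j(X_j)^{1/p_j}$ and $\mu:=\mu_1\otimes\cdots\otimes\mu_r$, the hypothesis yields $\sum_n\|g_n\|_{L^1(\mu)}\|h_n\|_{L^1(\nu)}\leq C_0\sum_n\gamma_n\|h_n\|_{L^p(\nu)}<\infty$.

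For (i) and (ii), I would apply Tonelli's theorem on $X_1\times\cdots\times X_r\times Y$ to the nonnegative function $(x,y)\mapsto\sum_n|g_n(x)h_n(y)|$, obtaining
\begin{equation*}
\int_{X_1\times\cdots\times X_r\times Y}\sum_{n=1}^\infty|g_n(x)|\,|h_n(y)|\,d(\mu\otimes\nu)(x,y)=\sum_{n=1}^\infty\|g_n\|_{L^1(\mu)}\|h_n\|_{L^1(\nu)}<\infty .
\end{equation*}
Thus this function is integrable, hence finite for a.e. $(x,y)$, which is the absolute convergence asserted in (i); in particular $k(x,y)=\sum_n g_n(x)h_n(y)$ is defined for a.e. $(x,y)$ and $|k|\leq\sum_n|g_n||h_n|\in L^1(\mu\otimes\nu)$, giving (ii). For (iii), the partial sums satisfy $k_n\to k$ a.e. and $|k_n-k|\leq\sum_{j>n}|g_j||h_j|\leq\sum_{j\geq1}|g_j||h_j|\in L^1$, so the dominated convergence theorem gives $\|k_n-k\|_1\to0$ (equivalently, $\|k_n-k\|_1\leq\sum_{j>n}\|g_j\|_{L^1(\mu)}\|h_j\|_{L^1(\nu)}$ is the tail of a convergent series).

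The only point needing a little extra care is (iv), where one must carry the factor $f(x)=f_1(x_1)\cdots f_r(x_r)$ and produce a statement valid for a.e. $y$. First, H\"older's inequality on each $(X_j,\mu_j)$, now pairing $p_j$ with $p_j'$, gives
\begin{equation*}
\int_{X_1\times\cdots\times X_r}|g_n(x)|\,|f(x)|\,d\mu(x)=\prod_{j=1}^r\int_{X_j}|g_{nj}|\,|f_j|\,d\mu_j\leq\gamma_n\prod_{j=1}^r\|f_j\|_{L^{p_j}(\mu_j)} .
\end{equation*}
Next, a second application of Tonelli, this time in $y$, yields
\begin{equation*}
\int_Y\sum_{n=1}^\infty\gamma_n\,|h_n(y)|\,d\nu(y)\leq\nu(Y)^{1/q}\sum_{n=1}^\infty\gamma_n\|h_n\|_{L^p(\nu)}<\infty ,
\end{equation*}
so $\Phi(y):=\sum_n\gamma_n|h_n(y)|<\infty$ for a.e. $y$. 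Combining the two displays, for a.e. fixed $y$ the nonnegative function $x\mapsto\sum_n|g_n(x)h_n(y)f(x)|$ is $\mu$-integrable, with integral at most $\Phi(y)\prod_j\|f_j\|_{L^{p_j}(\mu_j)}$, and it dominates every $x\mapsto k_n(x,y)f(x)$; since $k_n(\cdot,y)f\to k(\cdot,y)f$ pointwise a.e., the dominated convergence theorem in $x$ gives, for a.e. $y$,
\begin{equation*}
\lim_n\int_{X_1\times\cdots\times X_r}k_n(x,y)f(x)\,d\mu(x)=\int_{X_1\times\cdots\times X_r}k(x,y)f(x)\,d\mu(x),
\end{equation*}
which is exactly (iv). I expect this last step --- securing that the dominating function in $x$ is integrable for almost every $y$, via the auxiliary Tonelli bound in $y$ rather than a pointwise estimate --- to be the only mildly delicate part; everything else is bookkeeping with the tensor structure of $g_n$ and the finiteness of the measures.
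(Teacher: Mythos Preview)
Your proof is correct and follows essentially the same route as the paper's: reduce to $L^1$ via H\"older and the finiteness of the measures, invoke Tonelli/monotone convergence to get a.e.\ absolute convergence and the integrable majorant, and conclude with dominated convergence. The only cosmetic difference is organizational: the paper carries the factor $f$ from the outset, builds the majorant $s(x,y)=\sum_j|h_j(y)g_j(x)f(x)|$ via the Levi monotone convergence theorem, and then specializes to $f=(1,\dots,1)$ to extract (i)--(ii), whereas you treat (i)--(iii) first without $f$ and bring $f$ in only for (iv) through the auxiliary function $\Phi(y)=\sum_n\gamma_n|h_n(y)|$.
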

	\begin{proof}
		Let $k_n(x,y) = \sum_{j=1}^n h_j(y)\, g_j(x)\, f(x),$ where $g_j=(g_{j1},g_{j2},\ldots,g_{jr}) \in L^{p_1'}(\mu_1) \times L^{p_2'}(\mu_2) \times \cdots \times L^{p_r'}(\mu_r)$ and $f=(f_1,f_2,\ldots,f_r) \in L^{p_1}(\mu_1) \times L^{p_2}(\mu_2) \times \cdots \times L^{p_r}(\mu_r).$ Now, by H$\ddot{\text{o}}$lder's inequality, 
		\begin{align*}
		& \int_{X_1,X_2,\ldots,X_r} \int_Y |k_n(x,y)|\, d\nu(y)\, d(\mu_1 \otimes \mu_2 \otimes \cdots \otimes \mu_r)(x) \\
		&\leq \int_{X_1,X_2,\ldots,X_r} \int_Y \sum_{j=1}^n  |h_j(y)\,g_j(x) \, f(x)|\, d\nu(y)\, d(\mu_1 \otimes \mu_2 \otimes \cdots \otimes \mu_r)(x) \\
		& \leq \sum_{j=1}^n \int_Y |h_j(y)|\, d\nu(y) \int_{X_1,X_2, \ldots,X_r} |g_j(x)\,f(x)|\, d(\mu_1 \otimes \mu_2 \otimes \ldots\otimes \mu_r)(x) \\
		& \leq \sum_{j=1}^n \|h_j\|_{L^p(\nu)} (\nu(Y))^{\frac{1}{q}} \|g_j\|_{L^{p_1'}(\mu_1) \times L^{p_2'}(\mu_2) \times \cdots \times L^{p_r'}(\mu_r)} \|f\|_{L^{p_1}(\mu_1) \times \cdots \times L^{p_r}(\mu_r)} \\
		&= (\nu(Y))^{\frac{1}{q}} \|f\|_{L^{p_1}(\mu_1) \times \cdots \times L^{p_r}(\mu_r)} \sum_{j=1}^n\|h_j\|_{L^p(\nu)}\|g_j\|_{L^{p_1'}(\mu_1) \times L^{p_2'}(\mu_2) \times \cdots \times L^{p_r'}(\mu_r)} \\
		& \leq M<\infty\,\,\,\,\text{for all}\,\, n \in \mathbb{N}.
		\end{align*}
		Therefore, $\|k_n\|_1 \leq M$ for all $n \in \mathbb{N}.$ 
		Set $s_n(x,y)= \sum_{j=1}^n |h_j(y) g_j(x)f(x)|.$ Note that $\{s_n\}_n$ is an increasing sequence in $L^1(\mu_1 \otimes \mu_2 \otimes \cdots \otimes \mu_r \otimes \nu)$ and 
		$$\sup_{n} \int_{X_1,X_2,\ldots X_r}\int_Y |s_n(x,y|\, d(\mu_1 \otimes \mu_2 \otimes \cdots \otimes \mu_r)(x)\, d\nu(y) <M.$$ Now,  by monotone convergence theorem of B. Levi we get that $\lim_{n} s_n(x,y) =s$ exists and finite for almost all $(x,y).$ Also, $s \in L^{1}(\mu_1 \otimes \mu_2 \otimes \cdots \otimes \mu_r \otimes \nu).$ By taking $f=(1,1,\ldots,1)$ and using $|k(x,y|\leq |s(x,y)|,$the proof of Part (i) and Part (ii) follow.
		Since $\{k_n\}_n$ is bounded by $s(x,y)$ we can apply the Lebesgue dominated convergence theorem to get Part (iii).
		To prove the last part, note that $|k_n(x,y)| \leq s(x,y)$ for all $n \in \mathbb{N}$ and almost all $(x,y),$ where $k_n(x,y)= \sum_{j=1}^n h_j(y)\,g_j(x) f(x).$ $s \in L^1(\mu_1 \otimes \mu_2 \otimes \cdots \otimes \mu_r\otimes \nu)$ implies that $s(x, \cdots) \in L^1(\mu_1 \otimes \mu_2 \otimes \cdots \otimes \mu_r)$ for almost all $x \in X_1 \otimes X_2 \otimes \cdots \otimes X_r.$
		Now, the proof of Part (iv) follows using Lebesgue's dominated convergence theorem.
	\end{proof}
	Next theorem present a characterization of the multilinear nuclear operators for finite measure spaces. 
	\begin{theorem} \label{finiteDel} Let $(X_i, \mu_i), 1 \leq i \leq r$ and $(Y, \nu)$ be finite measure spaces. Let $1 \leq p_i,p <\infty, 1 \leq i \leq r$ and let $p_i', q$ be such that $\frac{1}{p_i}+ \frac{1}{p_i'}=1, \frac{1}{p}+\frac{1}{q}=1$ for $1 \leq i \leq r.$ Let $T: L^{p_1}(\mu) \times L^{p_2}(\mu_2) \times \cdots \times L^{p_r}(\mu_r) \rightarrow L^p(\nu)$ be a multilinear operator. Then $T$ is a nuclear operator if and only if there exist sequences $\{g_n\}_n$ with $g_n=(g_{n1}, g_{n2}, \ldots, g_{nr})$ and $\{h_n\}_n$ in $L^{p_1'}(\mu_1) \times L^{p_2'}(\mu_2) \times \cdots \times L^{p_r'}(\mu_r)$ and $L^p(\nu)$ respectively such that $\sum_{n} \|g_n\|_{L^{p_1'}(\mu_1) \times L^{p_2'}(\mu_2) \times \cdots \times L^{p_r'}(\mu_r)} \|h_n\|_{L^p(\nu)}<\infty$ and for all $f = (f_1,f_2,\ldots, f_r) \in L^{p_1}(\mu) \times L^{p_2}(\mu_2) \times \cdots \times L^{p_r}(\mu_r) $ we have
		\begin{align*}
		&(Tf)(y)= \int_{X _1,X_2,\cdots,X_r} \left( \sum_{n=1}^\infty g_n(x)\, h_n(x) \right) f(x)\, d(\mu_1 \otimes \mu_2 \otimes \cdots \otimes \mu_r)(x) \\
		& = \int_{X_1} \int_{X_2} \cdots \int_{X_r} \left( \sum_{n=1}^\infty g_{n1}(x_1)\, g_{n2}(x_2)\ldots g_{nr}(x_r)\, h_n(x) \right) \\
		& \hspace{4cm} \times f_1(x_1) \,f_2(x_2) \ldots f_r(x_r)\, d\mu_1(x_1)\, d\mu_2(x_2) \cdots d\mu_r(x_r)
		\end{align*}
		for almost every $y \in Y.$
	\end{theorem}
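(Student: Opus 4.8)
The plan is to read the statement off almost directly from Lemma \ref{FDel}, treating it as a purely measure-theoretic bookkeeping matter once the convergence facts of that lemma are in hand. Recall that, in the multilinear analogue of the definition of nuclearity, $T$ being nuclear means precisely that there are sequences $g_n=(g_{n1},\dots,g_{nr})$ with $g_{ni}\in L^{p_i'}(\mu_i)$ and $h_n\in L^p(\nu)$ with
\begin{equation*}
\sum_{n}\|g_n\|_{L^{p_1'}(\mu_1)\times\cdots\times L^{p_r'}(\mu_r)}\,\|h_n\|_{L^p(\nu)}=\sum_n\Big(\prod_{i=1}^r\|g_{ni}\|_{L^{p_i'}(\mu_i)}\Big)\|h_n\|_{L^p(\nu)}<\infty,
\end{equation*}
and such that $Tf=\sum_n\big(\prod_{i=1}^r\int_{X_i}f_i\,g_{ni}\,d\mu_i\big)h_n$ in $L^p(\nu)$ for every $f=(f_1,\dots,f_r)$; here I use that $L^{p_i}(\mu_i)'=L^{p_i'}(\mu_i)$ for $1\le p_i<\infty$, which holds because each $\mu_i$ is finite. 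The whole task is to move back and forth between this series representation and the integral representation with kernel $k(x,y)=\sum_n g_n(x)h_n(y)=\sum_n g_{n1}(x_1)\cdots g_{nr}(x_r)h_n(y)$.

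For the forward implication I would start from nuclear data $\{g_n\},\{h_n\}$, and for each fixed $N$ apply Fubini's theorem to the finite sum to obtain, for a.e.\ $y$,
\begin{equation*}
\sum_{n=1}^N\Big(\prod_{i=1}^r\int_{X_i}f_i\,g_{ni}\,d\mu_i\Big)h_n(y)=\int_{X_1\times\cdots\times X_r}\Big(\sum_{n=1}^N g_n(x)h_n(y)\Big)f(x)\,d(\mu_1\otimes\cdots\otimes\mu_r)(x).
\end{equation*}
Letting $N\to\infty$, the right-hand side tends to $\int k(x,y)f(x)\,d\mu(x)$ for a.e.\ $y$ by parts (ii)--(iv) of Lemma \ref{FDel}, while the left-hand side tends to $Tf$ in $L^p(\nu)$ since $\sum_n\big\|(\prod_i\langle f_i,g_{ni}\rangle)h_n\big\|_{L^p}\le\big(\prod_i\|f_i\|_{p_i}\big)\sum_n\prod_i\|g_{ni}\|_{p_i'}\|h_n\|_p<\infty$, hence along a subsequence also a.e. Intersecting the two a.e.\ sets and matching the limits yields the integral formula.

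For the converse I would take the stated integral representation, set $\phi_n(f):=\prod_{i=1}^r\int_{X_i}f_i\,g_{ni}\,d\mu_i$ — a bounded decomposable multilinear functional of norm $\le\prod_i\|g_{ni}\|_{p_i'}=\|g_n\|_{L^{p_1'}(\mu_1)\times\cdots\times L^{p_r'}(\mu_r)}$ — observe that $\sum_n\phi_n(f)h_n$ converges absolutely in $L^p(\nu)$, and run the same Fubini-plus-Lemma \ref{FDel}(iv)-plus-subsequence argument to identify its sum a.e.\ with $\int k(x,y)f(x)\,d\mu(x)=Tf(y)$; since $\sum_n\|g_n\|_{L^{p_1'}(\mu_1)\times\cdots\times L^{p_r'}(\mu_r)}\|h_n\|_{L^p}<\infty$ by hypothesis, this exhibits $T$ as nuclear. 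The genuinely delicate points — justifying the interchange of the infinite sum with the multiple integral, the integrability of $k$, and reconciling $L^p(\nu)$-convergence with pointwise a.e.\ convergence — are exactly what Lemma \ref{FDel} supplies, so I do not expect a real obstacle beyond careful bookkeeping: the main things to watch are keeping the free variable $y\in Y$ distinct from the integrated variable $x=(x_1,\dots,x_r)$, and confirming that multilinear nuclearity corresponds to \emph{decomposable} functionals $g_n=(g_{n1},\dots,g_{nr})$ rather than arbitrary functionals on the product. Finally, this theorem handles only finite measure spaces; the $\sigma$-finite version (Theorem 4.4) is then obtained by an exhaustion argument, writing each $X_i$ and $Y$ as an increasing union of subsets of finite measure and passing to the limit.
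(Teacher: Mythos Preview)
Your proposal is correct and follows essentially the same route as the paper's proof: both directions hinge on Lemma \ref{FDel}(iv) to interchange the infinite sum with the product integral, together with the passage from $L^p(\nu)$-convergence to a.e.\ convergence via a subsequence of partial sums. The only cosmetic difference is that the paper phrases the forward-direction subsequence step as passing to subsequences $\{\tilde g_n\},\{\tilde h_n\}$ of the nuclear data rather than of the partial sums, but the content is identical.
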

	\begin{proof} Let $T:L^{p_1}(\mu) \times L^{p_2}(\mu_2) \times \cdots \times L^{p_r}(\mu_r) \rightarrow L^p(\nu) $ be a nuclear operator. Then, by definition,  there exist $\{g_n\}_n \subset  L^{p_1'}(\mu_1) \times L^{p_2'}(\mu_2) \times \cdots \times L^{p_r'}(\mu_r)$ and $\{h_n\}_n \subset L^p(\nu)$ such that  $$\sum_{n} \|g_n\|_{L^{p_1'}(\mu_1) \times L^{p_2'}(\mu_2) \times \cdots \times L^{p_r'}(\mu_r)} \|h_n\|_{L^p(\nu)}<\infty$$ and 
		$$Tf = \sum_{n} \langle f, g_n \rangle h_n,$$ where $\langle f,g_n \rangle = \langle f_1, g_{n1} \rangle \langle f_2, g_{n2} \rangle \cdots \langle f_r, g_{nr} \rangle$ with $$\langle f_i, g_{ni} \rangle = \int_{X_i} f_i(x_1)\, g_{ni}(x_i)\,d\mu_i(x_i)\,\,\,\,\, 1 \leq i \leq r.$$
		Thus,  
		$$Tf(y)= \sum_{n=1}^\infty \left( \int_{X_1, X_2,\cdots,X_r}  f(x) g_n(x)\, d(\mu_1 \otimes \mu_2 \otimes \cdots  \otimes \mu_r)(x)\right) h_n(y),$$ where sums converge in $L^p(\nu).$ Therefore, there exist subsequences $\{\tilde{g}_n\}_n$ and $\{\tilde{h}_n\}_n$ of $\{g_n\}_n$ and $\{h_n\}_n$ respectively such that 
		$$(Tf)(y)= \sum_{n=1}^\infty \langle f, \tilde{g}_n \rangle \, \tilde{h}_n(y)\,\,\,\, \text{for almost every}\,\, y \in Y.$$
		$\{\tilde{g}_n\}_n$ and $\{\tilde{h}_n\}_n$ are being subsequence of $\{g_n\}_n$ and $\{h_n\}_n$ respectively also satisfy $\sum_{n=1}^\infty \|\tilde{g}_n\|_{L^{p_1'}(\mu_1) \times L^{p_2'}(\mu_2) \times \cdots \times L^{p_r'}(\mu_r)} \|\tilde{h}_n\|_{L^p(\nu)}<\infty.$
		By Lemma \ref{FDel}(iv), we get 
		\begin{align*}
		& (Tf)(y)= \sum_{n=1}^\infty \left( \int_{X_1,X_2,\ldots X_r} \tilde{g}_n(x) f(x) \,d(\mu_1 \otimes \mu_2 \otimes \cdots \otimes \mu_r)(x) \right)\, \tilde{h}_n(y)  \\ 
		& = \lim_{n} \sum_{j=1}^n \left( \int_{X_1,X_2,\ldots X_r} \tilde{g}_j(x) f(x) \,d(\mu_1 \otimes \mu_2 \otimes \cdots \otimes \mu_r)(x) \right)\, \tilde{h}_j(y) \\
		& = \lim_{n} \int_{X_1,X_2,\ldots X_r} \left( \sum_{j=1}^n \tilde{g}_j(x) \, \tilde{h}_j(y)  \right)\, f(x) \,d(\mu_1 \otimes \mu_2 \otimes \cdots \otimes \mu_r)(x) \\
		& = \int_{X_1,X_2,\ldots X_r} \left( \sum_{n=1}^\infty \tilde{g}_n(x) \, \tilde{h}_n(y)  \right)\, f(x) \,d(\mu_1 \otimes \mu_2 \otimes \cdots \otimes \mu_r)(x)\,\,\, a.e. y \in Y..
		\end{align*}
		Conversely, assume that there exist sequences $\{g_n\}_n$ with $g_n=(g_{n1}, g_{n2}, \ldots, g_{nr})$ and $\{h_n\}_n$ in $L^{p_1'}(\mu_1) \times L^{p_2'}(\mu_2) \times \cdots \times L^{p_r'}(\mu_r)$ and $L^p(\nu)$ respectively such that $\sum_{n} \|g_n\|_{L^{p_1'}(\mu_1) \times L^{p_2'}(\mu_2) \times \cdots \times L^{p_r'}(\mu_r)} \|h_n\|_{L^p(\nu)}<\infty$ and for $f \in  L^{p_1}(\mu) \times L^{p_2}(\mu_2) \times \cdots \times L^{p_r}(\mu_r) $ we have
		$$(Tf)(y)= \int_{X_1,X_2,\cdots,X_r} \left( \sum_{n=1}^\infty g_n(x)\, h_n(y) \right) f(x)\, d(\mu_1 \otimes \mu_2 \otimes \cdots \otimes \mu_r)(x)$$
		Again, using Lemma \ref{FDel}(iv), we get 
		\begin{align*}
		& (Tf)(y)= \int_{X_1,X_2,\cdots,X_r} \left( \sum_{n=1}^\infty g_n(x)\, h_n(y) \right) f(x)\, d(\mu_1 \otimes \mu_2 \otimes \cdots \otimes \mu_r)(x) \\
		&= \lim_{n} \int_{X_1,X_2,\cdots,X_r} \left( \sum_{j=1}^n g_j(x)\, h_j(y) \right) f(x)\, d(\mu_1 \otimes \mu_2 \otimes \cdots \otimes \mu_r)(x) \\ 
		& = \lim_{n}\sum_{j=1}^n \left( \int_{X_1,X_2,\cdots,X_r} f(x) g_j(x)  \, d(\mu_1 \otimes \mu_2 \otimes \cdots \otimes \mu_r)(x)  \right) h_j(y) \\
		&= \sum_{n=1}^\infty \left( \int_{X_1,X_2,\cdots,X_r} f(x) g_n(x)  \, d(\mu_1 \otimes \mu_2 \otimes \cdots \otimes \mu_r)(x)  \right) h_n(y) \\
		&= \sum_{n} \langle f, g_n \rangle h_n(y) \,\,\,\, \,\textnormal{almost every}\, y \in Y.
		\end{align*}
		To show that $Tf= \sum_{n} \langle f, g_n \rangle h_n$ in $L^p(\nu).$ We  define $t_n= \sum_{j=1}^n \langle f, g_j\rangle h_j,$ then the sequence $\{t_n\}_n \subset L^p(\nu)$ and for every $n \in \mathbb{N},$
		\begin{align*}
		&|t_n(y)| \leq \|f\|_{L^{p_1}(\mu_1) \times L^{p_2}(\mu_2) \times \cdots \times L^{p_r}(\mu_r)} \sum_{j=1}^n \|g_j\|_{L^{p_1'}(\mu_1) \times L^{p_2'}(\mu_2) \times L^{p_r'}(\mu_r)} |h_j(y)| \\
		& \leq \|f\|_{L^{p_1}(\mu_1) \times L^{p_2}(\mu_2) \times \cdots \times L^{p_r}(\mu_r)} \sum_{j=1}^\infty \|g_j\|_{L^{p_1'}(\mu_1) \times L^{p_2'}(\mu_2) \times L^{p_r'}(\mu_r)} |h_j(y)| = q(y)\, (\text{say}).
		\end{align*}
		Then, $q(y)$ is well-defined and $q \in L^p(\nu).$ Infact,  $q$ is the limit of an increasing sequence $\{q_n\}_n $ in $L^p(\nu),$ where $$q_n(y)= \|f\|_{L^{p_1}(\mu_1) \times L^{p_2}(\mu_2) \times \cdots \times L^{p_r}(\mu_r)} \sum_{j=1}^n \|g_j\|_{L^{p_1'}(\mu_1) \times L^{p_2'}(\mu_2) \times L^{p_r'}(\mu_r)} |h_j(y)|,$$ and  
		\begin{align*}
		& \|q_n\|_{L^p(\nu)} \leq  \|f\|_{L^{p_1}(\mu_1) \times L^{p_2}(\mu_2) \times \cdots \times L^{p_r}(\mu_r)} \\
		& \hspace{1cm} \times \sum_{j=1}^n \|g_j\|_{L^{p_1'}(\mu_1) \times L^{p_2'}(\mu_2) \times L^{p_r'}(\mu_r)} \|h_j(y)\|_{L^p(\nu)} <M.
		\end{align*}
		So, by applying monotone convergence theorem of B. Levi we deduce that $q \in L^p(\nu).$ Finally, Lebesgue dominated convergence theorem gives that we get that $t_n \rightarrow Tf$ in $L^p(\nu).$ 
	\end{proof}
	
	Now, we are  ready to extend above theorem from finite measure spaces to $\sigma$-finite measure spaces. First, we prove the following lemma.
	
	\begin{lemma} \label{Dsigma} Let $(X_i, \mu_i), 1 \leq i \leq r$ and $(Y, \nu)$ be $\sigma$-finite measure spaces. Let $1 \leq p_i,p <\infty, 1 \leq i \leq r$ and let $p_i', q$ be such that $\frac{1}{p_i}+ \frac{1}{p_i'}=1, \frac{1}{p}+\frac{1}{q}=1$ for $1 \leq i \leq r.$ Suppose $f \in L^{p_1}(\mu_1) \times L^{p_2}(\mu_2) \times \cdots \times L^{p_r}(\mu_r),$ where $f= (f_1,f_2,\ldots, f_r)$ such that each $f_i \in L^{p_i}(\mu_i),$ and $\{g_n\}_n$ with $g_n=(g_{n1}, g_{n2}, \ldots, g_{nr})$ and $\{h\}_n$ are two sequences in $L^{p_1'}(\mu_1) \times L^{p_2'}(\mu_2) \times \cdots \times L^{p_r'}(\mu_r)$ and $L^p(\nu)$ respectively such that $\sum_{n} \|g_n\|_{L^{p_1'}(\mu_1) \times L^{p_2'}(\mu_2) \times \cdots \times L^{p_r'}(\mu_r)} \|h_n\|_{L^p(\nu)}<\infty.$ Then the parts (i) and (iv) of Lemma \ref{FDel} hold.   
	\end{lemma}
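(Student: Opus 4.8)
The plan is to reduce the lemma to its finite‑measure counterpart, Lemma \ref{FDel}, by a standard $\sigma$‑finite exhaustion for part (i), and to obtain part (iv) by manufacturing a $y$‑sectional integrable majorant directly. First, using $\sigma$‑finiteness I would fix increasing sequences of measurable sets $A_i^{(k)}\uparrow X_i$ with $\mu_i(A_i^{(k)})<\infty$ $(1\le i\le r)$ and $B^{(k)}\uparrow Y$ with $\nu(B^{(k)})<\infty$, and set $g_{nj}^{(k)}:=g_{nj}\,1_{A_j^{(k)}}$ and $h_n^{(k)}:=h_n\,1_{B^{(k)}}$. Since restriction does not increase any $L^{t}$‑norm for $t<\infty$, one has $\sum_n\|g_n^{(k)}\|_{L^{p_1'}(\mu_1)\times\cdots\times L^{p_r'}(\mu_r)}\|h_n^{(k)}\|_{L^p(\nu)}\le \sum_n\|g_n\|_{L^{p_1'}(\mu_1)\times\cdots\times L^{p_r'}(\mu_r)}\|h_n\|_{L^p(\nu)}<\infty$, and the finite measure spaces $A_1^{(k)}\times\cdots\times A_r^{(k)}$ and $B^{(k)}$ meet the hypotheses of Lemma \ref{FDel}. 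Its part (i) then produces a null set $N_k\subset A_1^{(k)}\times\cdots\times A_r^{(k)}\times B^{(k)}$ off which $\sum_j g_j(x)h_j(y)$ converges absolutely, so in particular $\lim_n\sum_{j=1}^n g_j(x)h_j(y)$ is finite there. Because $A_1^{(k)}\times\cdots\times A_r^{(k)}\times B^{(k)}\uparrow X_1\times\cdots\times X_r\times Y$ and $\bigcup_k N_k$ is $(\mu_1\otimes\cdots\otimes\mu_r\otimes\nu)$‑null, part (i) follows on the full product.

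For part (iv), fix $f=(f_1,\dots,f_r)$ and set $H(y):=\sum_{j\ge1}\|g_j\|_{L^{p_1'}(\mu_1)\times\cdots\times L^{p_r'}(\mu_r)}\,|h_j(y)|$. Applying Minkowski's inequality to the partial sums and passing to the limit by monotone convergence gives $\|H\|_{L^p(\nu)}\le\sum_{j\ge1}\|g_j\|_{L^{p_1'}(\mu_1)\times\cdots\times L^{p_r'}(\mu_r)}\|h_j\|_{L^p(\nu)}<\infty$, hence $H(y)<\infty$ for $\nu$‑a.e.\ $y$. For any such $y$, Tonelli, iterated H\"older, and the factorization of the mixed norm on product functions give $\int_{X_1,\dots,X_r}\sum_j|h_j(y)|\,|g_j(x)|\,|f(x)|\,d(\mu_1\otimes\cdots\otimes\mu_r)(x)\le H(y)\,\|f\|_{L^{p_1}(\mu_1)\times\cdots\times L^{p_r}(\mu_r)}<\infty$, so $S_y(x):=\sum_j|h_j(y)g_j(x)f(x)|$ belongs to $L^1(\mu_1\otimes\cdots\otimes\mu_r)$. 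The partial sums $\sum_{j=1}^n g_j(x)h_j(y)f(x)$ are dominated in modulus by $S_y(x)$ and, by part (i) together with Fubini, converge $\mu$‑a.e.\ in $x$; hence the dominated convergence theorem yields
\[
\lim_n\int_{X_1,\dots,X_r}\Big(\sum_{j=1}^n g_j(x)h_j(y)\Big)f(x)\,d\mu(x)=\int_{X_1,\dots,X_r}\Big(\sum_{n\ge1} g_n(x)h_n(y)\Big)f(x)\,d\mu(x),\qquad \mu=\mu_1\otimes\cdots\otimes\mu_r,
\]
for $\nu$‑a.e.\ $y$, which is exactly part (iv).

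I expect the only genuine obstacle to be that the possibly infinite total mass forbids the clean $L^1(\mu\otimes\nu)$/Fubini--Tonelli reasoning used in the finite case (Lemma \ref{FDel}): one cannot integrate $|g_j|$ over $X$ or $|h_j|$ over $Y$ individually. This is circumvented for part (i) by exhausting by finite‑measure rectangles $A_1^{(k)}\times\cdots\times A_r^{(k)}\times B^{(k)}$, and for part (iv) by replacing a global $L^1$ majorant with the sectional majorant $S_y\in L^1(\mu)$ obtained from Minkowski's inequality; everything else reduces to the same bookkeeping as in the proof of Lemma \ref{FDel}.
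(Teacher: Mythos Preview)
Your proposal is correct and follows essentially the same approach as the paper. For part (i) the paper does exactly the $\sigma$-finite exhaustion you describe, restricting $g_n$ and $h_n$ to finite-measure pieces and invoking Lemma~\ref{FDel}(i) on each; for part (iv) the paper simply writes that it ``follows using Lebesgue dominated convergence theorem as in the proof of converse part of Theorem~\ref{finiteDel}'', which amounts to your construction of the sectional majorant $S_y$ via the $L^p(\nu)$-function $H(y)=\sum_j\|g_j\|\,|h_j(y)|$ controlled by Minkowski and monotone convergence---your write-up is more explicit, but the underlying idea is identical.
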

	\begin{proof} Since $(X_i, \mu_i), 1 \leq i \leq r$ and $(Y,\nu)$ are $\sigma$-finite measure spaces. Then there exist sequences $\{X_i^{k_i}\}_{k_i}$  and $\{Y^k\}_k$ such that $\mu_i(X_i^{k_i})< \infty,$ $\nu(Y^k)<\infty$ and $X_i= \bigcup_{k_i} X_i^{k_i}, \,Y= \bigcup_{k} Y^k.$  Now, consider the finite measure spaces $(X_i^{k_i}, \mu_i^{k_i})$ and $(Y^k, \nu^k),$ where $\mu_i^{k_i},\, 1 \leq i \leq r$ and $\nu_k$ are the restriction of $\mu_i, \, 1 \leq i \leq r$ and $\nu$ on $X_i^{k_i}, 1 \leq i \leq r$ and $Y^k$ respectively. Now, restrict the function $g_n$ on $X_1^{k_1}\times X_2^{k_2} \times \cdots X_r^{k_r}$ and $h_n$ on $Y^k.$ Then for each $k_i$ and $k,$  we have 
		$$\sum_{n=1}^\infty \|g\|_{L^{p_1'}(\mu_1^{k_1})\times L^{p_2'}(\mu_2^{k_2}) \times \cdots \times L^{p_r'}(\mu_r^{k_r})} \|h_n\|_{L^p(\nu^k)}<\infty. $$
		By Lemma \ref{FDel}(a) we know that $\sum_{j=1}^\infty g_j(x)\, h_j(y)$ converge absolutely for almost every $(x,y) \in  \times X_1^{k_1}\times X_2^{k_2} \times \cdots X_r^{k_r} \times Y^k.$ Therefore, $\sum_{j=1}^\infty g_j(x)\, h_j(y)$ converge absolutely for almost every $(x,y)= (x_1,x_2, \ldots,x_r,y)  \in X_1 \times X_2 \times  \cdots \times X_r \times Y.$ 
		
		We know, from the part (i) that the series $\sum_{j=1}^\infty g_j(x)\, h_j(y) f(x)$ converge absolutely for almost every $(x,y)$. The part (iv) follows using Lebesgue dominated convergence theorem as in the proof of converse part of Theorem \ref{finiteDel}.
	\end{proof} 
	Here is our main theorem of this section on the characterization of multilinear nuclear operators on $\sigma$-finite measure spaces.
	\begin{theorem}\label{sigmafinitesVDelgado}
		Let $(X_i, \mu_i), 1 \leq i \leq r$ and $(Y, \nu)$ be $\sigma$-finite measure spaces. Let $1 \leq p_i,p <\infty, 1 \leq i \leq r$ and let $p_i', q$ be such that $\frac{1}{p_i}+ \frac{1}{p_i'}=1, \frac{1}{p}+\frac{1}{q}=1$ for $1 \leq i \leq r.$ Let $T: L^{p_1}(\mu) \times L^{p_2}(\mu_2) \times \cdots \times L^{p_r}(\mu_r) \rightarrow L^p(\nu)$ be a multilinear operator. Then $T$ is a nuclear operator if and only if there exist sequences $\{g_n\}_n$ with $g_n=(g_{n1}, g_{n2}, \ldots, g_{nr})$ and $\{h_n\}_n$ in $L^{p_1'}(\mu_1) \times L^{p_2'}(\mu_2) \times \cdots \times L^{p_r'}(\mu_r)$ and $L^p(\nu)$ respectively such that $\sum_{n} \|g_n\|_{L^{p_1'}(\mu_1) \times L^{p_2'}(\mu_2) \times \cdots \times L^{p_r'}(\mu_r)} \|h_n\|_{L^p(\nu)}<\infty$ and for all $f = (f_1,f_2,\ldots, f_r) \in L^{p_1}(\mu) \times L^{p_2}(\mu_2) \times \cdots \times L^{p_r}(\mu_r) $ we have
		\begin{align*}
		(Tf)(y)&= \int_{X_1,X_2,\cdots,X_r} \left( \sum_{n=1}^\infty g_n(x)\, h_n(y) \right) f(x)\, d(\mu_1 \otimes \mu_2 \otimes \cdots \otimes \mu_r)(x) \\
		& = \int_{X_1} \int_{X_2} \cdots \int_{X_r} \left( \sum_{n=1}^\infty g_{n1}(x_1)\, g_{n2}(x_2)\ldots g_{nr}(x_r)\, h_n(y) \right) \\
		& \hspace{4cm} \times f_1(x_1) \,f_2(x_2) \ldots f_r(x_r)\, d\mu_1(x_1)\, d\mu_2(x_2) \cdots d\mu_r(x_r)
		\end{align*}
		for almost every $y \in Y.$
	\end{theorem}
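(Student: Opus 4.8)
The plan is to show that the proof of Theorem~\ref{finiteDel} carries over almost verbatim to the $\sigma$-finite setting once Lemma~\ref{FDel} is replaced by Lemma~\ref{Dsigma}. The key observation motivating this is that, in the finite-measure argument, parts (ii) and (iii) of Lemma~\ref{FDel} — the integrability of the kernel $k(x,y)=\sum_j g_j(x)h_j(y)$ on the product space and the $L^1$-convergence of its partial sums — are never actually used: the necessity part invokes only part (iv), and the sufficiency part invokes part (iv) together with a separate monotone/dominated convergence argument to upgrade almost-everywhere convergence to convergence in $L^p(\nu)$. Since Lemma~\ref{Dsigma} salvages precisely parts (i) and (iv) for $\sigma$-finite spaces, the whole argument goes through.

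For necessity, suppose $T$ is nuclear. By definition there are sequences $\{g_n\}_n\subset L^{p_1'}(\mu_1)\times\cdots\times L^{p_r'}(\mu_r)$ and $\{h_n\}_n\subset L^p(\nu)$ with $\sum_n\|g_n\|_{L^{p_1'}(\mu_1)\times\cdots\times L^{p_r'}(\mu_r)}\|h_n\|_{L^p(\nu)}<\infty$ such that $Tf=\sum_n\langle f,g_n\rangle h_n$ with convergence in $L^p(\nu)$, where $\langle f,g_n\rangle=\prod_{i=1}^r\langle f_i,g_{ni}\rangle$ and each $\langle f_i,g_{ni}\rangle=\int_{X_i}f_i g_{ni}\,d\mu_i$ is finite by H\"older. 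Passing to a subsequence $\{\tilde g_n\}_n,\{\tilde h_n\}_n$ (which still satisfies the summability condition, being a subsum of a convergent series of nonnegative terms) we may assume the series converges to $(Tf)(y)$ for almost every $y$. Lemma~\ref{Dsigma}(iv) then permits interchanging the sum with the integral over $X_1\times\cdots\times X_r$, yielding
\begin{equation*}
(Tf)(y)=\int_{X_1,X_2,\cdots,X_r}\Big(\sum_{n=1}^\infty \tilde g_n(x)\,\tilde h_n(y)\Big)f(x)\,d(\mu_1\otimes\cdots\otimes\mu_r)(x)\qquad\text{a.e. }y\in Y,
\end{equation*}
which is the desired representation after writing $\tilde g_n(x)=\tilde g_{n1}(x_1)\cdots\tilde g_{nr}(x_r)$ and invoking Fubini.

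For sufficiency, suppose the integral representation holds with $\sum_n\|g_n\|_{L^{p_1'}(\mu_1)\times\cdots\times L^{p_r'}(\mu_r)}\|h_n\|_{L^p(\nu)}<\infty$. Applying Lemma~\ref{Dsigma}(iv) in the reverse direction gives
\begin{equation*}
(Tf)(y)=\lim_{N}\sum_{n=1}^N\Big(\int_{X_1,\cdots,X_r}f(x)\,g_n(x)\,d(\mu_1\otimes\cdots\otimes\mu_r)(x)\Big)h_n(y)=\sum_{n=1}^\infty\langle f,g_n\rangle\,h_n(y)
\end{equation*}
for almost every $y$. It remains to promote this to convergence in $L^p(\nu)$: the partial sums are dominated by $q(y):=\|f\|_{L^{p_1}(\mu_1)\times\cdots\times L^{p_r}(\mu_r)}\sum_n\|g_n\|_{L^{p_1'}(\mu_1)\times\cdots\times L^{p_r'}(\mu_r)}|h_n(y)|$, and $q\in L^p(\nu)$ by the monotone convergence theorem of B.~Levi applied to the increasing sequence of its partial sums, whose $L^p(\nu)$-norms are uniformly bounded by the summability hypothesis. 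Lebesgue's dominated convergence theorem then yields $\sum_n\langle f,g_n\rangle h_n\to Tf$ in $L^p(\nu)$, so $T$ is nuclear.

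The main point to verify — and really the only one that is new relative to the finite case — is Lemma~\ref{Dsigma}, proved by writing $X_i=\bigcup_{k_i}X_i^{k_i}$ and $Y=\bigcup_k Y^k$ as increasing unions of finite-measure sets, applying Lemma~\ref{FDel}(i) on each finite product $X_1^{k_1}\times\cdots\times X_r^{k_r}\times Y^k$, and passing to the exhaustion. Thus the obstacle one might anticipate, namely that $k$ need not lie in $L^1$ of the full $\sigma$-finite product, is simply bypassed: that fact is never required. I expect everything else to be routine bookkeeping transcribed from the proof of Theorem~\ref{finiteDel}, and the same argument (inserting $s$-th powers into the summability condition) handles the $s$-nuclear case $0<s\le 1$ without change.
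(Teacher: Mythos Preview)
Your proposal is correct and follows exactly the paper's approach: the paper's own proof consists of a single sentence stating that one repeats the argument of Theorem~\ref{finiteDel} with Lemma~\ref{FDel}(iv) replaced by Lemma~\ref{Dsigma}, which is precisely what you have written out in detail. Your observation that parts (ii) and (iii) of Lemma~\ref{FDel} are never used in the finite-measure proof is accurate and explains why Lemma~\ref{Dsigma} (which salvages only (i) and (iv)) suffices.
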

	\begin{proof} The proof of theorem follows the same line as the proof of Theorem \ref{finiteDel} by replacing Lemma \ref{FDel}(iv) by Lemma \ref{Dsigma}.
	\end{proof} 
	\begin{remark} \label{VDelgado}
		An analogue of the characterization above theorem holds for $s$- nuclear operators, $0<s \leq 1,$ replacing the terms $\|g_n\|_{L^{p_1'}(\mu_1) \times L^{p_2'}(\mu_2) \times \cdots \times L^{p_r'}(\mu_r)} \|h_n\|_{L^p(\nu)}$ by $\|g_n\|_{L^{p_1'}(\mu_1) \times L^{p_2'}(\mu_2) \times \cdots \times L^{p_r'}(\mu_r)}^s \|h_n\|_{L^p(\nu)}^s$ in the sum, i.e., under the condition that 
		$$\sum_{n} \|g_n\|_{L^{p_1'}(\mu_1) \times L^{p_2'}(\mu_2) \times \cdots \times L^{p_r'}(\mu_r)}^s \|h_n\|_{L^p(\nu)}^s<\infty.$$
	\end{remark}

	\section{Discrete and periodic $s$-nuclear, $0<s \leq 1,$  multilinear pseudo-differential operators on Lebesgue spaces}
	
	This section is devoted to study the $L^p$-nuclearity of multilinear pseudo-differential operator defined on $\mathbb{Z}^n$ and the torus $\mathbb{T}^n$ with the help of multilinear version of Delgado's theorem for $s$-nuclear, $0<s \leq 1,$ operator, namely, Theorem \ref{sigmafinitesVDelgado} and Remark \ref{VDelgado} in the previous section.   
	
	\subsection{$s$-Nuclearity of multilinear pseudo-differential operators on $\mathbb{Z}^n$}  
	In this subsection we study multilinear discrete pseudo-differential operators. We present several results concerning to the $s$-nuclearity , $0<s \leq 1,$ of multilinear discrete pseudo-differential operator. We begin with the following characterization.
	\begin{theorem} \label{vdiscrete}
		Let $a$ be a measurable function defined on $\mathbb{Z}^n \times (\mathbb{T}^{nr}).$ The multilinear pseudo-differential operator $T_a: L^{p_1}(\mathbb{Z}^n)\times L^{p_2}(\mathbb{Z}^n) \times \cdots L^{p_r}(\mathbb{Z}^n) \rightarrow L^{p}(\mathbb{Z}^n), 1 \leq p_i< \infty,$ for all $1 \leq i \leq r$ is a $s$-nuclear, $0 < s \leq 1, $ operator if and only if  the following decomposition holds:
		$$a(x,\xi)=e^{-i2\pi \tilde{x}\cdot \xi}\sum_{k}   h_k(x)\mathscr{F}_{\mathbb{Z}^{nr}}(g_k)(-\xi),\xi\in\mathbb{T}^{nr},x\in\mathbb{Z}^{n},$$ 
		where $\tilde{x}= (x,x, \ldots,x)  \in (\mathbb{Z}^n)^r;$ $\{h_k\}_k$ and $\{g_k\}_k$ with $g_k= (g_{k1}, g_{k2}, \ldots, g_{kr})$  are two sequences in $L^{p}(\mathbb{Z}^n)$ and $L^{p_1'}(\mathbb{Z}^n)\times L^{p_2'}(\mathbb{Z}^n) \times \cdots \times L^{p_r'}(\mathbb{Z}^n)$ respectively such that $\sum_{n=1}^\infty \|h_n\|_{L^{p}(\mathbb{Z}^n)}^s \|g_n\|_{L^{p_1'}(\mathbb{Z}^n)\times L^{p_2'}(\mathbb{Z}^n) \times \cdots \times L^{p_r'}(\mathbb{Z}^n)}^s <\infty.$
	\end{theorem}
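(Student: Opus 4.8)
The plan is to realise $T_a$ as a multilinear integral operator on $\mathbb{Z}^n$ equipped with the counting measure (a $\sigma$-finite measure space) and to apply the characterisation of $s$-nuclear multilinear integral operators, Theorem \ref{sigmafinitesVDelgado} together with Remark \ref{VDelgado}; the real content is to rewrite the resulting kernel condition as the stated condition on the symbol by means of the toroidal and discrete Fourier transforms.

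First I would compute the integral kernel of $T_a$. Expanding each $\mathscr{F}_{\mathbb{Z}^n}g_j$ and interchanging summation with integration (harmless for finitely supported $g_j$, and such tuples are dense) one gets
\[
T_a(g)(\ell)=\sum_{m\in\mathbb{Z}^{nr}}\kappa(\ell,m)\,g_1(m_1)\cdots g_r(m_r),\qquad
\kappa(\ell,m)=\int_{\mathbb{T}^{nr}}e^{i2\pi(\tilde\ell-m)\cdot\eta}\,a(\ell,\eta)\,d\eta,
\]
with $\tilde\ell=(\ell,\dots,\ell)\in(\mathbb{Z}^n)^r$. Equivalently, testing against Dirac masses, $j\mapsto\kappa(\ell,\tilde\ell+j)$ is the sequence of Fourier coefficients of $a(\ell,\cdot)$, so $a$ is recovered from $\kappa$ by Fourier inversion on $\mathbb{T}^{nr}$,
\[
a(\ell,\eta)=\sum_{j\in\mathbb{Z}^{nr}}e^{i2\pi j\cdot\eta}\,\kappa(\ell,\tilde\ell+j).
\]
Thus $a\leftrightarrow\kappa$ is an invertible correspondence, and $T_a$ is the multilinear integral operator with kernel $\kappa$.

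Now Theorem \ref{sigmafinitesVDelgado} and Remark \ref{VDelgado} say that $T_a$ is $s$-nuclear, $0<s\le1$, if and only if there are sequences $\{h_k\}_k\subset L^p(\mathbb{Z}^n)$ and $\{g_k\}_k$, $g_k=(g_{k1},\dots,g_{kr})\in L^{p_1'}(\mathbb{Z}^n)\times\cdots\times L^{p_r'}(\mathbb{Z}^n)$, with $\sum_k\|h_k\|_{L^p}^s\|g_k\|_{L^{p_1'}\times\cdots\times L^{p_r'}}^s<\infty$, such that $\kappa(\ell,m)=\sum_k g_{k1}(m_1)\cdots g_{kr}(m_r)\,h_k(\ell)$ for every $(\ell,m)$. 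Substituting this product form into the inversion formula for $a$ and reindexing $j\mapsto j'=\tilde\ell+j$ gives
\[
a(\ell,\eta)=\sum_k h_k(\ell)\sum_{j'\in\mathbb{Z}^{nr}}e^{i2\pi(j'-\tilde\ell)\cdot\eta}g_k(j')
=e^{-i2\pi\tilde\ell\cdot\eta}\sum_k h_k(\ell)\,\mathscr{F}_{\mathbb{Z}^{nr}}(g_k)(-\eta),
\]
which, upon renaming $(\ell,\eta)$ as $(x,\xi)$, is precisely the asserted decomposition. Conversely, starting from that decomposition of $a$ one plugs it into the defining formula for $T_a$, carries out the $\eta$-integration to produce the pairings $\langle f_i,g_{ki}\rangle$, and arrives at $T_af=\sum_k\langle f,g_k\rangle h_k$, an $s$-nuclear representation.

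I expect the main obstacle to be the precise sense in which these identities hold together with the attendant interchanges of sums and integrals: a priori $a$ is only measurable, and for fixed $x$ the series $\sum_k h_k(x)\mathscr{F}_{\mathbb{Z}^{nr}}(g_k)(-\xi)$ must be matched against the Fourier series of $a(x,\cdot)$. I would resolve this by keeping the kernel $\kappa$ (an honest function on $\mathbb{Z}^n\times\mathbb{Z}^{nr}$, where pointwise and almost-everywhere coincide) as the primary object: the hypothesis $0<s\le1$ yields $\sum_k\|h_k\|_{L^p}\|g_k\|<\infty$, so the Fubini and dominated-convergence steps already carried out in Lemma \ref{FDel} and Lemma \ref{Dsigma} apply, and matching Fourier coefficients on $\mathbb{T}^{nr}$ then forces the two expressions for the symbol to agree as distributions, hence almost everywhere since $a$ is a function. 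The identity in the statement is to be read in this sense.
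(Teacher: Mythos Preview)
Your proposal is correct and follows essentially the same route as the paper: compute the integral kernel $\kappa(\ell,m)=\int_{\mathbb{T}^{nr}}e^{i2\pi(\tilde\ell-m)\cdot\eta}a(\ell,\eta)\,d\eta$, invoke Theorem~\ref{sigmafinitesVDelgado} and Remark~\ref{VDelgado} to characterise $s$-nuclearity via a product decomposition of $\kappa$, and then use Fourier inversion on $\mathbb{T}^{nr}$ (together with testing against Dirac sequences) to translate that decomposition into the stated formula for the symbol. Your closing paragraph on the precise sense of the identities and the justification of the interchanges is, if anything, more careful than the paper, which performs these manipulations formally.
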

	\begin{proof}
		Let $T_a$ be a $s$-nuclear, $0 < s \leq 1, $ operator. Therefore, by Remark \ref{VDelgado} there exist sequences $\{g_k\}_k \in L^{p_1'}(\mathbb{Z}^n)\times L^{p_2'}(\mathbb{Z}^n) \times \cdots \times L^{p_r'}(\mathbb{Z}^n)$ and $\{h_k\}_k \in L^{p}(\mathbb{Z}^n)$ with $\sum_{n} \|h_k\|_{L^{p}(\mathbb{Z}^n)}^s \|g_k\|_{L^{p_1'}(\mathbb{Z}^n)\times L^{p_2'}(\mathbb{Z}^n) \times \cdots \times L^{p_r'}(\mathbb{Z}^n)}^s <\infty$ such that for all $f \in L^{p_1}(\mathbb{Z}^n)\times L^{p_2}(\mathbb{Z}^n) \times \cdots L^{p_r}(\mathbb{Z}^n),$ we have 
		
		\begin{equation}
		(T_af)(x)= \sum_{y \in \mathbb{Z}^{nr}} \sum_{k} h_k(x) \, g_k(y) \, f(y),
		\end{equation}
		where $g_k=(g_{k_1},g_{k_2},\ldots g_{k_r}).$ \\
		On the other hand, set $\xi= (\xi_1,\xi_2,\ldots,\xi_r) \in \mathbb{T}^{nr},$ we have  
		\begin{align*}
		(T_af)(x) &= \int_{\mathbb{T}^{nr}} a(x, \xi)\,  e^{2\pi i x \cdot (\xi_1+\xi_2+\cdots+\xi_r)} \mathscr{F}_{\mathbb{Z}^n}f_1(\xi_1)\,\mathscr{F}_{\mathbb{Z}^n}f_2(\xi_2), \ldots, \mathscr{F}_{\mathbb{Z}^n}f_r(\xi_r) \, d\xi \\
		& =  \int_{\mathbb{T}^{nr}} a(x, \xi)\,e^{2\pi i x \cdot (\xi_1+\xi_2+\cdots+\xi_r)}\, \sum_{y \in \mathbb{Z}^{nr}}f_1(y_1)\,f_2(y_2),\ldots f_r(y_r)\, e^{-2 \pi i y\cdot \xi} \, d\xi \\
		& = \int_{\mathbb{T}^{nr}} a(x,\xi) e^{2 \pi i(\tilde{x}-y)\cdot \xi}\,\sum_{y \in \mathbb{Z}^{nr}}f_1(y_1)\,f_2(y_2),\ldots f_r(y_r)\, d\xi.
		\end{align*}
		
		Therefore, for all $f=(f_1,f_2, \ldots, f_r) \in L^{p_1}(\mathbb{Z}^n)\times L^{p_2}(\mathbb{Z}^n) \times \cdots \times L^{p_r}(\mathbb{Z}^n),$ we get
		\begin{equation}\label{vpe}
		\int_{\mathbb{T}^{nr}} a(x,\xi) e^{2 \pi i(\tilde{x}-y)\cdot \xi}\,\sum_{y \in \mathbb{Z}^{nr}}f(y)\, d\xi= \sum_{y \in \mathbb{Z}^{nr}} \sum_{k} h_k(x) \, g_k(y) \, f(y).
		\end{equation} 
		Since \eqref{vpe} holds for all $f=(f_1,f_2, \ldots, f_r) \in L^{p_1}(\mathbb{Z}^n)\times L^{p_2}(\mathbb{Z}^n) \times \cdots\times L^{p_r}(\mathbb{Z}^n).$ For any $\ell= (\ell_1, \ell_2, \ldots, \ell_r) \in (\mathbb{Z}^{n})^r,$ by choose $f= f_\ell$ such that 
		$$f_\ell(y)= \begin{cases} 1 & \text{if} \,\ell_j=y_j\,\, \forall\, j \in \{1,2, \ldots, r\}, \\ 0 & \text{otherwise}.  \end{cases}$$ Equation \eqref{vpe} in turn gives 
		$$\mathscr{F}^{-1}_{\mathbb{Z}^{nr}}[a(x,\cdot)](\tilde{x}-\ell):= \int_{\mathbb{T}^{nr}} a(x,\xi) e^{2 \pi i(\tilde{x}-\ell)\cdot \xi}\, d\xi=  \sum_{k} h_k(x) \, g_k(\ell),$$ where $\tilde{x}= (x,x, \ldots,x) \in (\mathbb{Z}^n)^r.$ So, by the Fourier inversion formula for the discrete Fourier transform we have
		\begin{align*}
		a(x,\xi)&= \mathscr{F}_{\mathbb{Z}^{nr}}[\mathscr{F}_{\mathbb{Z}^{nr}}^{-1}(a(x,\cdot))](\xi) = \sum_{\ell \in \mathbb{Z}^{nr} }e^{-i2\pi \xi\cdot \ell}\mathscr{F}_{\mathbb{Z}^{nr}}^{-1}(a(x,\cdot))(\ell)\\
		&=\sum_{\ell \in \mathbb{Z}^{nr} }e^{-i2\pi \xi\cdot (\tilde{x}-\ell)}\mathscr{F}_{\mathbb{Z}^{nr}}^{-1}(a(x,\cdot))(\tilde{x}-\ell)\\
		&=\sum_{\ell \in \mathbb{Z}^{nr} }e^{-i2\pi \xi\cdot (\tilde{x}-\ell)}\sum_{k}h_k(x)g_k(\ell)\\
		&=e^{-i2\pi \xi\cdot \tilde{x}}\sum_{\ell \in \mathbb{Z}^{nr} }\sum_{k}h_k(x)e^{i2\pi\xi \ell}g_k(\ell)\\
		&=e^{-i2\pi \xi\cdot \tilde{x}}\sum_{k}   h_k(x)\sum_{\ell \in \mathbb{Z}^{nr} }e^{i2\pi\xi \ell}g_k(\ell)\\
		&=e^{-i2\pi \xi\cdot \tilde{x}}\sum_{k}   h_k(x)\mathscr{F}_{\mathbb{Z}^{nr}}(g_k)(-\xi).
		\end{align*}
		
		Conversely, assume that there exist $\{h_k\}_k$ and $\{g_k\}_k$ in $L^{p}(\mathbb{Z}^n)$ and $L^{p_1'}(\mathbb{Z}^n)\times L^{p_2'}(\mathbb{Z}^n) \times \cdots\times  L^{p_r'}(\mathbb{Z}^n)$ respectively with  $$\sum_{k=1}^\infty \|h_k\|_{L^p(\mathbb{Z}^n)}^s \|g_k\|_{L^{p_1'}(\mathbb{Z}^n)\times L^{p_2'}(\mathbb{Z}^n) \times \cdots L^{p_r'}(\mathbb{Z}^n)}^s <\infty$$ such that 
		$$ a(x,\xi)=e^{-i2\pi \xi\cdot \tilde{x}}\sum_{k}   h_k(x)\mathscr{F}_{\mathbb{Z}^{nr}}(g_k)(-\xi),\xi\in\mathbb{T}^{nr},x\in\mathbb{Z}^{n}.$$ Then, we have the identity
		
		$$\int_{   \mathbb{T}^{nr}} a(x,\xi) e^{2 \pi i(\tilde{x}-\ell)\cdot \xi}\, d\xi=  \sum_{k} h_k(x) \, g_k(\ell)$$  for all $\ell=(\ell_1,\ell_2, \ldots, \ell_r) \in (\mathbb{Z}^{n})^r$. Using this we have 
		\begin{align*}
		(T_af)(x)&= \int_{\mathbb{T}^{nr}}  a(x, \xi)\,  e^{2\pi i x \cdot (\xi_1+\xi_2+\cdots+\xi_r)} \mathscr{F}_{\mathbb{Z}^n}f_1(\xi_1)\,\mathscr{F}_{\mathbb{Z}^n}f_2(\xi_2), \ldots, \mathscr{F}_{\mathbb{Z}^n}f_r(\xi_r) \, d\xi  \\
		& = \sum_{y \in \mathbb{Z}^{nr}}\, \int_{\mathbb{T}^{nr}}\,a(x,\xi) e^{2 \pi i(\tilde{x}-y)\cdot \xi} f(y)\, d\xi \\
		& = \sum_{y \in \mathbb{Z}^{nr}} \left( \sum_{k} h_k(x) \, g_k(y) \right) f(y) \,\,\, \text{almost every}\, x \in \mathbb{Z}^n.
		\end{align*}
		Hence, by Remark \ref{VDelgado}, $T_a$ is a $s$-nuclear, $0 < s \leq 1, $ operator.
	\end{proof}
	
	The following theorem present a necessary condition on symbols for the associated multilinear discrete pseudo-differential operator to be a nuclear operator.

	\begin{theorem}
		Let $a$ be a measurable function on $\mathbb{Z}^n \times \mathbb{T}^{nr}$ such that the miltilinear pseudo-differential operator $T_a:L^{p_1}(\mathbb{Z}^n)\times L^{p_2}(\mathbb{Z}^n) \times \cdots L^{p_r}(\mathbb{Z}^n) \rightarrow L^{p}(\mathbb{Z}^n)$ is a nuclear operator. Then, the sequence 
		$$\{\|  \int_{\mathbb{T}^{nr}} a(x,\xi) e^{2 \pi i(\tilde{x}-\ell)\cdot \xi}\, d\xi    \|_{L^{p}(\mathbb{Z}^n_x)}\}_{(\ell_1,\ell_2, \ldots,\ell_r) \in (\mathbb{Z}^n)^r} $$ is in  $ L^{p_1'}(\mathbb{Z}^n)\times L^{p_2'}(\mathbb{Z}^n) \times \cdots L^{p_r'}(\mathbb{Z}^n).$
	\end{theorem}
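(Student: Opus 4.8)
The plan is to apply the characterization of nuclear discrete multilinear operators established in Theorem \ref{vdiscrete} (in the case $s=1$) to obtain an explicit series representation of the quantity
$$\kappa_\ell(x):=\int_{\mathbb{T}^{nr}} a(x,\xi)\,e^{2\pi i(\tilde{x}-\ell)\cdot\xi}\,d\xi,\qquad x\in\mathbb{Z}^n,\ \ell=(\ell_1,\dots,\ell_r)\in(\mathbb{Z}^n)^r,$$
and then to estimate its mixed norm by Minkowski's inequality for iterated Lebesgue norms.

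First I would use Theorem \ref{vdiscrete}: since $T_a$ is nuclear, there exist sequences $\{h_k\}_k\subset L^p(\mathbb{Z}^n)$ and $\{g_k\}_k$, $g_k=(g_{k1},\dots,g_{kr})$ with $g_{kj}\in L^{p_j'}(\mathbb{Z}^n)$, such that $\sum_k\|h_k\|_{L^p(\mathbb{Z}^n)}\|g_k\|_{L^{p_1'}(\mathbb{Z}^n)\times\cdots\times L^{p_r'}(\mathbb{Z}^n)}<\infty$ and, as was shown inside the proof of that theorem,
$$\kappa_\ell(x)=\sum_k h_k(x)\,g_{k1}(\ell_1)\cdots g_{kr}(\ell_r)$$
for every $\ell\in(\mathbb{Z}^n)^r$ and every $x\in\mathbb{Z}^n$. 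Since $\mathbb{Z}^n$ and $(\mathbb{Z}^n)^r$ are $\sigma$-finite, Lemma \ref{FDel}(i) (or Lemma \ref{Dsigma}) guarantees that this series converges absolutely for almost every $(x,\ell)$, whence the pointwise bound $|\kappa_\ell(x)|\le\sum_k|h_k(x)|\prod_{j=1}^r|g_{kj}(\ell_j)|$ holds a.e.

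Next I would take the $L^p(\mathbb{Z}^n)$-norm in the variable $x$ and apply Minkowski's inequality in $L^p$, obtaining, for each fixed $\ell$,
$$\|\kappa_\ell\|_{L^p(\mathbb{Z}^n)}\le\sum_k\|h_k\|_{L^p(\mathbb{Z}^n)}\prod_{j=1}^r|g_{kj}(\ell_j)|.$$
Finally I would take successively the $L^{p_1'}(\mathbb{Z}^n_{\ell_1})$-, \dots, $L^{p_r'}(\mathbb{Z}^n_{\ell_r})$-norms. Because every $p_j'\ge1$, the triangle inequality for each of these norms lets one pass the sum over $k$ to the outside, while the factorization of the mixed norm of a product recorded in Section~4 gives $\big\|\prod_{j=1}^r|g_{kj}(\ell_j)|\big\|_{L^{p_1'}(\mathbb{Z}^n)\times\cdots\times L^{p_r'}(\mathbb{Z}^n)}=\prod_{j=1}^r\|g_{kj}\|_{L^{p_j'}(\mathbb{Z}^n)}=\|g_k\|_{L^{p_1'}(\mathbb{Z}^n)\times\cdots\times L^{p_r'}(\mathbb{Z}^n)}$. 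Combining these estimates,
$$\big\|\,\{\|\kappa_\ell\|_{L^p(\mathbb{Z}^n)}\}_{\ell\in(\mathbb{Z}^n)^r}\,\big\|_{L^{p_1'}(\mathbb{Z}^n)\times\cdots\times L^{p_r'}(\mathbb{Z}^n)}\le\sum_k\|h_k\|_{L^p(\mathbb{Z}^n)}\,\|g_k\|_{L^{p_1'}(\mathbb{Z}^n)\times\cdots\times L^{p_r'}(\mathbb{Z}^n)}<\infty,$$
which is precisely the claim. The only point requiring a little care is the interchange of the summation in $k$ with the iterated norms, and this is justified exactly by the absolute convergence provided by Lemma \ref{FDel}(i)/Lemma \ref{Dsigma} together with Minkowski's inequality; no substantial obstacle remains beyond this bookkeeping of mixed norms.
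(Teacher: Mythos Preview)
Your proposal is correct and follows essentially the same route as the paper: invoke Theorem \ref{vdiscrete} to write $\kappa_\ell(x)=\sum_k h_k(x)\,g_k(\ell)$, then apply Minkowski's inequality first in the $x$-variable and then in the $\ell$-variables to bound the mixed norm by $\sum_k\|h_k\|_{L^p}\prod_j\|g_{kj}\|_{L^{p_j'}}<\infty$. Your explicit appeal to Lemma \ref{FDel}(i)/Lemma \ref{Dsigma} for the absolute convergence needed to justify the interchanges is a welcome bit of extra care, but otherwise the argument is identical to the paper's.
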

	\begin{proof}
		Since $T_a$ is nuclear operator. So, by Theorem \ref{vdiscrete}, 
		there exist $\{h_k\}_k$ and $\{g_k\}_k$ in $L^{p}(\mathbb{Z}^n)$ and $L^{p_1'}(\mathbb{Z}^n)\times L^{p_2'}(\mathbb{Z}^n) \times \cdots L^{p_r'}(\mathbb{Z}^n)$ respectively with  $$\sum_{n=1}^\infty \|h_n\|_{L^{p}(\mathbb{Z}^n)} \|g_n\|_{L^{p_1'}(\mathbb{Z}^n)\times L^{p_2'}(\mathbb{Z}^n) \times \cdots L^{p_r'}(\mathbb{Z}^n)} <\infty$$ such that  $$\int_{\mathbb{T}^{nr}} a(x,\xi) e^{2 \pi i(\tilde{x}-\ell)\cdot \xi}\, d\xi=  \sum_{k} h_k(x) \, g_k(\ell)$$ holds for all $\ell=(\ell_1,\ell_2, \ldots, \ell_r) \in (\mathbb{Z}^{n})^r$. Which is equivalent to saying that 
		$$(\mathscr{F}_{\mathbb{T}^{nr}}a)(x, x-\ell_1, x-\ell_2, \ldots, x-\ell_r) =  \sum_{k} h_k(x) \, g_k(\ell)$$ holds for all $\ell=(\ell_1,\ell_2, \ldots, \ell_r) \in (\mathbb{Z}^{n})^r$. 
		Now, By Minkowski's inequality, we get 
		\begin{align*}
		&F(\ell):= \left( \sum_{x \in \mathbb{Z}^n} |(\mathscr{F}_{\mathbb{T}^{nr}}a)(x, x-\ell_1, x-\ell_2, \ldots, x-\ell_r)|^{p} \right)^{\frac{1}{p}} \\ 
		&  = \left( \sum_{x \in \mathbb{Z}^n} \left| \sum_{k} h_k(x) \, g_k(\ell)\right|^{p} \right)^{\frac{1}{p}} \\
		&\leq \sum_{k} \left( \sum_{x \in \mathbb{Z}^n} |h_k(x) \, g_k(\ell)|^{p} \right)^{\frac{1}{p}} \leq \sum_{k} \|h_k\|_{L^{p}(\mathbb{Z})}|g_k(\ell)|,
		\end{align*}
		and thus we have 
		\begin{align*}
		&\Vert F(\ell)\Vert_{L^{p_1'}(\mathbb{Z}^n) \times \cdots \times  L^{p_r'}(\mathbb{Z}^n) }\leq \Vert \sum_{k} \|h_k\|_{L^{p}(\mathbb{Z})}|g_k(\ell)|\Vert_{L^{p_1'}(\mathbb{Z}^n) \times \cdots \times  L^{p_r'}(\mathbb{Z}^n) } \\
		&\leq \sum_{k} \|h_k\|_{L^{p}(\mathbb{Z})}\Vert g_k(\ell)\Vert_{L^{p_1'}(\mathbb{Z}^n) \times \cdots \times  L^{p_r'}(\mathbb{Z}^n) }\\
		&= \sum_{k} \|h_k\|_{L^{p}(\mathbb{Z})}\prod_{j=1}^r\Vert g_{kj} \Vert_{L^{p_j'}(\mathbb{Z}^{n})}< \infty.
		\end{align*} 
		So, we finish the proof.
	\end{proof}
	
	\begin{theorem}
		Let $a$ be a measurable function on $\mathbb{Z}^n \times \mathbb{T}^{nr}$ such that the multilinear pseudo-differential operator $T_a: L^{p_1}(\mathbb{Z}^n) \times \cdots \times  L^{p_r}(\mathbb{Z}^n) \rightarrow L^{p}(\mathbb{Z}^n)$ is a nuclear operator. Then, we have 
		$$ \left\{ \left\| \int_{\mathbb{T}^{nr}} a(x,\xi) e^{2 \pi i(\tilde{x}-\ell)\cdot \xi}\, d\xi \right\|_{L^{p_1'}(\mathbb{Z}^n) \times \cdots \times L^{p_r'}(\mathbb{Z}^n)} \right\}_{x \in (\mathbb{Z}^n)}  \in L^{p}(\mathbb{Z}^n),$$ where $p_i'$ is such that $\frac{1}{p_i}+\frac{1}{p_i'}=1 $ for every $ 1 \leq i \leq r.$
	\end{theorem}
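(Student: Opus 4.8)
The plan is to mirror the proof of the preceding theorem, simply interchanging the roles of the variables $x\in\mathbb{Z}^n$ and $\ell=(\ell_1,\dots,\ell_r)\in(\mathbb{Z}^n)^r$; the two assertions are genuinely dual and the argument is symmetric. First I would invoke the characterization of Theorem \ref{vdiscrete}: since $T_a$ is nuclear, there exist $\{h_k\}_k\subset L^p(\mathbb{Z}^n)$ and $\{g_k\}_k$ with $g_k=(g_{k1},\dots,g_{kr})\in L^{p_1'}(\mathbb{Z}^n)\times\cdots\times L^{p_r'}(\mathbb{Z}^n)$ such that $\sum_k\|h_k\|_{L^p(\mathbb{Z}^n)}\|g_k\|_{L^{p_1'}(\mathbb{Z}^n)\times\cdots\times L^{p_r'}(\mathbb{Z}^n)}<\infty$ and, for every $x\in\mathbb{Z}^n$ and every $\ell\in(\mathbb{Z}^n)^r$,
$$\int_{\mathbb{T}^{nr}}a(x,\xi)\,e^{2\pi i(\tilde x-\ell)\cdot\xi}\,d\xi=\sum_k h_k(x)\,g_k(\ell).$$

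Next, fixing $x$ and writing $G(x)$ for the mixed norm in $\ell$ of the left-hand side, I would apply the triangle inequality (Minkowski's inequality, iterated over the summation variables $\ell_1,\dots,\ell_r$) in the mixed-norm space $L^{p_1'}(\mathbb{Z}^n)\times\cdots\times L^{p_r'}(\mathbb{Z}^n)$, together with the factorization $g_k(\ell)=g_{k1}(\ell_1)\cdots g_{kr}(\ell_r)$, to obtain
$$G(x)=\Big\|\sum_k h_k(x)\,g_k(\cdot)\Big\|_{L^{p_1'}(\mathbb{Z}^n)\times\cdots\times L^{p_r'}(\mathbb{Z}^n)}\leq\sum_k|h_k(x)|\,\prod_{j=1}^r\|g_{kj}\|_{L^{p_j'}(\mathbb{Z}^n)}.$$
Here the scalar $h_k(x)$ passes out of the $\ell$-norm because it does not depend on $\ell$.

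Finally, I would take the $L^p(\mathbb{Z}^n_x)$ norm of $G$ and use Minkowski's inequality once more, this time in $L^p(\mathbb{Z}^n)$, to conclude
$$\|G\|_{L^p(\mathbb{Z}^n)}\leq\Big\|\sum_k|h_k(\cdot)|\,\|g_k\|_{L^{p_1'}(\mathbb{Z}^n)\times\cdots\times L^{p_r'}(\mathbb{Z}^n)}\Big\|_{L^p(\mathbb{Z}^n)}\leq\sum_k\|h_k\|_{L^p(\mathbb{Z}^n)}\,\|g_k\|_{L^{p_1'}(\mathbb{Z}^n)\times\cdots\times L^{p_r'}(\mathbb{Z}^n)}<\infty,$$
which is exactly the claimed membership of $\{G(x)\}_{x\in\mathbb{Z}^n}$ in $L^p(\mathbb{Z}^n)$.

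There is no substantive obstacle; the only thing requiring care is the bookkeeping of the iterated mixed norm and the use of the tensor structure of each $g_k$, which is what legitimizes pulling $h_k(x)$ out of the norm in $\ell$ and pulling $\|g_k\|$ out of the norm in $x$. It is also worth recording, as in the previous theorem, that the identity furnished by Theorem \ref{vdiscrete} can be rewritten as $(\mathscr{F}_{\mathbb{T}^{nr}}a)(x,x-\ell_1,\dots,x-\ell_r)=\sum_k h_k(x)g_k(\ell)$, which makes the duality with the preceding statement transparent.
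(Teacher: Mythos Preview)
Your proposal is correct and follows essentially the same approach as the paper's proof: invoke Theorem \ref{vdiscrete} to obtain the kernel identity $\int_{\mathbb{T}^{nr}}a(x,\xi)e^{2\pi i(\tilde x-\ell)\cdot\xi}\,d\xi=\sum_k h_k(x)g_k(\ell)$, bound the mixed $\ell$-norm pointwise in $x$ by $\sum_k|h_k(x)|\prod_j\|g_{kj}\|_{L^{p_j'}}$ via the triangle inequality, and then apply Minkowski in $L^p(\mathbb{Z}^n_x)$ to conclude. The paper's argument is identical in structure, differing only in notation (it writes $F(x)$ for your $G(x)$).
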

	\begin{proof}
		Since $T_a$ is nuclear operator. So, by Theorem \ref{vdiscrete}, 
		there exist $\{h_k\}_k$ and $\{g_k\}_k$ in $L^{p}(\mathbb{Z}^n)$ and $L^{p_1'}(\mathbb{Z}^n)\times \cdots \times L^{p_r'}(\mathbb{Z}^n)$ respectively with  $$ \sum_{k} \|h_k\|_{L^{p}(\mathbb{Z}^n)} \|g_k\|_{L^{p_1'}(\mathbb{Z}^n)\times \cdots \times L^{p_r'}(\mathbb{Z}^n)} <\infty$$ such that  $$\int_{\mathbb{T}^{nr}} a(x,\xi) e^{2 \pi i(\tilde{x}-\ell)\cdot \xi}\, d\xi=  \sum_{k} h_k(x) \, g_k(\ell)$$ holds for all $\ell=(\ell_1,\ell_2, \ldots, \ell_r) \in (\mathbb{Z}^{n})^r$.  Then 
		\begin{align*}
		&F(x)= \left\| \int_{T^{nr}} a(x,\xi) e^{2 \pi i(\tilde{x}-\ell)\cdot \xi}\, d\xi \right\|_{L^{p_1'}(\mathbb{Z}^n) \times L^{p_2'}(\mathbb{Z}^n) \times \cdots \times L^{p_r'}(\mathbb{Z}^n)} \\ 
		& = \left\| \sum_{k} h_k(x) \, g_k(\ell) \right\|_{L^{p_1'}(\mathbb{Z}^n) \times L^{p_2'}(\mathbb{Z}^n) \times \cdots \times L^{p_r'}(\mathbb{Z}^n)}\\
		& \leq   \sum_{k} \left\|  h_k(x) \, g_k(\ell) \right\|_{L^{p_1'}(\mathbb{Z}^n) \times L^{p_2'}(\mathbb{Z}^n) \times \cdots \times L^{p_r'}(\mathbb{Z}^n)} \\
		& \leq \sum_{k} |h_k(x)|\|g_k\|_{L^{p_1'}(\mathbb{Z}^n) \times L^{p_2'}(\mathbb{Z}^n)}= \sum_{k} |h_k(x)|\prod_{j=1}^r \|g_{kj}\|_{L^{p_j'}(\mathbb{Z}^n)}
		\end{align*}
		and therefore, by Minkowski's inequality, we get 
		\begin{align*}
		&     \left( \sum_{ x \in \mathbb{Z}^n} |F(x)|^p \right)^{\frac{1}{p}} = \left( \sum_{x \in \mathbb{Z}^n} \left( \sum_{k} |h_k(x)|\prod_{j=1}^r \|g_{kj}\|_{L^{p_j'}(\mathbb{Z}^n)}  \right)^{p} \right)^{\frac{1}{p}} \\
		&\leq \sum_{k} \left( \sum_{x \in \mathbb{Z}^n} |h_k(x)|^{p} \right)^{\frac{1}{p}} \prod_{j=1}^r \|g_{kj}\|_{L^{p_j'}(\mathbb{Z}^n)} \leq \sum_{k} \|h_k\|_{L^{p}(\mathbb{Z}^n)} \prod_{j=1}^r \|g_{kj}\|_{L^{p_j'}(\mathbb{Z}^n)} < \infty. 
		\end{align*}
		Hence, the conclusion follows.
	\end{proof}

	\subsection{$s$-Nuclearity of  multilinear pseudo-differential operators on the torus $\mathbb{T}^n$} In this section, we study the $s$-nuclearity, $0<s \leq 1,$ of multilinear periodic pseudodifferential operators. We give the following characterization of a multilinear $s$-nuclear, $0<s \leq 1,$ pseudo-differential operator on $\mathbb{T}^n.$ 
	
	\begin{theorem} Let $m$ be a measurable function on $\mathbb{T}^n \times \mathbb{Z}^{nr}.$ Then the mutlilinear pseudo-differential operator $T_m:L^{p_1}(\mathbb{T}^n) \times \cdots \times L^{p_r}(\mathbb{T}^n) \rightarrow L^p(\mathbb{T}^n),$ $1 \leq p_i,p < \infty $ for $1\leq i \leq r,$ is a $s$-nuclear, $0 < s \leq 1, $ operator if and only if there exist two sequences $\{g_k\}_k$ with $g_{k}= \left(g_{k1},g_{k2}, \ldots, g_{kr}\right)$ and $\{h_k\}_k$ in $L^{p_1'}(\mathbb{T}^n) \times \cdots \times L^{p_r'}(\mathbb{T}^n),\, \frac{1}{p_i}+\frac{1}{p_i'}=1$ for $1\leq i \leq r$ and $L^p(\mathbb{T}^n)$  respectively such that  $\sum_{k} \|g_k\|_{L^{p_1'}(\mathbb{T}^n) \times \cdots \times L^{p_r'}(\mathbb{T}^n)}^s \|h_k\|_{L^p(\mathbb{T})}^s <\infty $ and $$ m(x, \eta)= e^{-i2 \pi \tilde{x} \cdot \eta} \sum_{k} h_k(x)\, (\mathscr{F}_{\mathbb{T}^{nr}}g_k)(-\eta),\,\, \eta \in \mathbb{Z}^{nr}$$ where $\tilde{x}= (x,x,\ldots,x) \in \mathbb{T}^{nr}.$
	\end{theorem}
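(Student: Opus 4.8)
The plan is to mimic the strategy of Theorem~\ref{vdiscrete}, reducing everything to the multilinear Delgado-type characterization on finite measure spaces. First I would observe that $\mathbb{T}^n$ is a finite (hence $\sigma$-finite) measure space, so Theorem~\ref{sigmafinitesVDelgado} together with Remark~\ref{VDelgado} applies with $X_1=\cdots=X_r=Y=\mathbb{T}^n$ equipped with the normalised Haar measure. Thus $T_m$ is $s$-nuclear if and only if there are sequences $\{g_k\}_k$, $g_k=(g_{k1},\dots,g_{kr})\in L^{p_1'}(\mathbb{T}^n)\times\cdots\times L^{p_r'}(\mathbb{T}^n)$, and $\{h_k\}_k\subset L^p(\mathbb{T}^n)$ with $\sum_k\|g_k\|^s\,\|h_k\|^s<\infty$ such that
$$(T_mf)(x)=\int_{\mathbb{T}^{nr}}\Big(\sum_k g_{k1}(y_1)\cdots g_{kr}(y_r)\,h_k(x)\Big)f_1(y_1)\cdots f_r(y_r)\,dy$$
for almost every $x$; the content of the theorem is to translate this kernel condition into the stated condition on the symbol $m$.

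Second, I would rewrite the definition of $T_m$ as an integral operator. Writing each $(\mathscr{F}_{\mathbb{T}^n}f_j)(\eta_j)=\int_{\mathbb{T}^n}e^{-i2\pi y_j\cdot\eta_j}f_j(y_j)\,dy_j$ and interchanging the sum over $\eta\in\mathbb{Z}^{nr}$ (finite, for trigonometric polynomial data) with the integral gives
$$(T_mf)(x)=\int_{\mathbb{T}^{nr}}\Big(\sum_{\eta\in\mathbb{Z}^{nr}}e^{i2\pi(\tilde x-y)\cdot\eta}\,m(x,\eta)\Big)f_1(y_1)\cdots f_r(y_r)\,dy,\qquad \tilde x=(x,\dots,x),$$
so the (formal) Schwartz kernel of $T_m$ is $K(x,y)=\sum_{\eta}e^{i2\pi(\tilde x-y)\cdot\eta}m(x,\eta)$; equivalently $e^{i2\pi\tilde x\cdot\eta}m(x,\eta)$ is the $\eta$-th periodic Fourier coefficient of $y\mapsto K(x,y)$ on $\mathbb{T}^{nr}$.

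Third, for the forward implication I would test the two expressions for $T_mf$ against the exponentials $f_j(y_j)=e^{i2\pi y_j\cdot\beta_j}$, $\beta=(\beta_1,\dots,\beta_r)\in\mathbb{Z}^{nr}$, which lie in every $L^{p_j}(\mathbb{T}^n)$. From the defining series, $(T_mf)(x)=e^{i2\pi x\cdot(\beta_1+\cdots+\beta_r)}m(x,\beta)$ (only the $\eta=\beta$ term survives); from the nuclear representation, one gets $\sum_k h_k(x)\,(\mathscr{F}_{\mathbb{T}^{nr}}g_k)(-\beta)$, the interchange of $\sum_k$ and $\int_{\mathbb{T}^{nr}}$ being legitimate since $\int_{\mathbb{T}^{nr}}|g_{k1}(y_1)\cdots g_{kr}(y_r)|\,dy\le\|g_k\|_{L^{p_1'}\times\cdots\times L^{p_r'}}$ (Haar measure has total mass one) and $\sum_k|h_k(x)|\,\|g_k\|_{L^{p_1'}\times\cdots\times L^{p_r'}}<\infty$ for a.e.\ $x$ by the argument of Lemma~\ref{FDel}. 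Equating and dividing by $e^{i2\pi\tilde x\cdot\beta}$ (note $x\cdot(\beta_1+\cdots+\beta_r)=\tilde x\cdot\beta$) yields the claimed decomposition, valid simultaneously for all $\beta\in\mathbb{Z}^{nr}$ off a single null set since $\mathbb{Z}^{nr}$ is countable. For the converse I would substitute the assumed formula for $m$ into the definition of $T_m$, apply the Fourier inversion formula on $\mathbb{T}^{nr}$ to recognise the kernel as $\sum_k g_{k1}(y_1)\cdots g_{kr}(y_r)h_k(x)$, and conclude $s$-nuclearity from Theorem~\ref{sigmafinitesVDelgado} and Remark~\ref{VDelgado}.

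The main obstacle, exactly as in the discrete case, is purely bookkeeping: justifying the interchanges of summation and integration and the passage from ``the identity holds for all $f$ in the product $L^p$-space'' to ``the two kernels agree as elements of $L^1(\mathbb{T}^{nr})$ for a.e.\ $x$''. Once the testing against trigonometric monomials is set up, these points follow from Lemma~\ref{FDel} and dominated convergence just as in the proof of Theorem~\ref{vdiscrete}; no new idea is needed beyond swapping the roles of $\mathbb{Z}^n$ and $\mathbb{T}^n$ and of the discrete and periodic Fourier transforms.
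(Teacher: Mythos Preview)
Your proposal is correct and follows essentially the same approach as the paper: both directions invoke the multilinear Delgado characterization (Theorem~\ref{sigmafinitesVDelgado} and Remark~\ref{VDelgado}), the forward implication is obtained by testing on the exponentials $f_j(y_j)=e^{i2\pi \eta_j\cdot y_j}$ to isolate a single Fourier mode, and the converse is obtained by substituting the assumed decomposition of $m$ into the definition of $T_m$ and using Fourier inversion on $\mathbb{T}^{nr}$. Your write-up is, if anything, a bit more careful than the paper's about the interchange of $\sum_k$ and $\int_{\mathbb{T}^{nr}}$, but no step differs in substance.
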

	\begin{proof} Let $T_m$ is a multilinear $s$-nuclear, $0 < s \leq 1, $ pseudo-differential operator. Then, by Remark \ref{VDelgado}, there exist two sequences $\{g_k\}_k$ with $g_{k}= \left(g_{k1},g_{k2}, \ldots, g_{kr}\right)$ and $\{h_k\}_k$ in $L^{p_1'}(\mathbb{T}^n) \times \cdots \times L^{p_r'}(\mathbb{T}^n)$ and $L^p(\mathbb{T}^n)$ such that  $$\sum_{k} \|g_k\|_{L^{p_1'}(\mathbb{T}^n) \times \cdots \times L^{p_r'}(\mathbb{T}^n)}^s\|h_k\|_{L^p(\mathbb{T})}^s <\infty $$ and for all $f \in  L^{p_1}(\mathbb{T}^n) \times \cdots \times L^{p_r}(\mathbb{T}^n),$ we have 
		
		\begin{equation} \label{pdoto}
		T_mf(x)= \int_{\mathbb{T}^{nr}} \left( \sum_k h_k(x)\, g_k(y) \right)\, f(y) \,dy,
		\end{equation} 
		where $y=\left(y_1, y_2, \cdots, y_r \right) \in T^{nr}.$ 
		
		Now, for any $\eta= (\eta_1, \eta_2, \cdots, \eta_r) \in \mathbb{Z}^{nr},$ 
		define a function $$f_\eta(y)= \left(f_{\eta_i}, f_{\eta_2}, \cdots, f_{\eta_r} \right) \in L^{p_1}(\mathbb{T}^n) \times \cdots \times L^{p_r}(\mathbb{T}^n) $$ such that $f_{\eta_i}(y_i)= e^{i2\pi \eta_i \cdot y_i}$ for  $1 \leq i \leq r.$ Note that 
		\begin{equation*}
		(\mathscr{F}_{\mathbb{T}^n}{f}_{\eta_i})(\xi_i)=  \begin{cases} 1 & \eta_i=\xi_i \\ 0 & \eta_i \neq \xi_i \,\,\,\,\, 1 \leq i \leq r.
		\end{cases}
		\end{equation*}

		Therefore, by definition of periodic pseudo-differential operator and \eqref{pdoto}, we get 
		\begin{align*}
		T_m(f)(x)&=\sum_{\xi\in \mathbb{Z}^{nr}}e^{i2\pi x\cdot (\xi_1+\xi_2+\cdots +\xi_r)  }m(x,\xi)(\mathscr{F}_{\mathbb{T}^n}{f}_{1})(\xi_1)\cdots (\mathscr{F}_{\mathbb{T}^n}{f}_{r})(\xi_r) \\
		& = \int_{\mathbb{T}^{nr}} \left( \sum_k h_k(x)\, g_k(y) \right)\, f(y) \,dy
		\end{align*}
		holds for every $f=(f_1,f_2, \cdots, f_r) \in L^{p_1}(\mathbb{T}^n) \times \cdots \times L^{p_r}(\mathbb{T}^n).$ In particular for  $f= f_\eta,$  we get
		
		\begin{align*}
		e^{i2\pi x\cdot (\eta_1+\eta_2+\cdots +\eta_r)  }\, m(x,\eta) &= \int_{\mathbb{T}^{nr}} \left( \sum_k h_k(x)\, g_k(y) \right) e^{i2 \pi(\eta_1y_1+\eta_2 y_2+\cdots+ \eta_r y_r)} dy \\
		&= \sum_{k} h_k(x) \int_{\mathbb{T}^{nr}} g_k(y)\, e^{i2 \pi \eta \cdot y} \,dy \\ 
		&= \sum_{k} h_k(x)\, (\mathscr{F}_{\mathbb{T}^{nr}}g_k)(-\eta).
		\end{align*}
		Therefore, 
		$$ m(x, \eta)= e^{-i2 \pi \tilde{x} \cdot \eta} \sum_{k} h_k(x)\, (\mathscr{F}_{\mathbb{T}^{nr}}g_k)(-\eta),$$ where $\tilde{x}= (x,x,\ldots,x) \in \mathbb{T}^{nr}.$
		
		Conversely,  assume that there exist two sequences $\{g_k\}_k$ with $g_{k}= \left(g_{k1},g_{k2}, \ldots, g_{kr}\right)$ and $\{h_k\}_k$ in $L^{p_1'}(\mathbb{T}^n) \times \cdots \times L^{p_r'}(\mathbb{T}^n)$ and $L^p(\mathbb{T}^n)$ such that  $$\sum_{k} \|g_k\|_{L^{p_1'}(\mathbb{T}^n) \times \cdots \times L^{p_r'}(\mathbb{T}^n)}^s \|h_k\|_{L^p(\mathbb{T})}^s <\infty $$ and $$ m(x, \eta)= e^{-i2 \pi \tilde{x} \cdot \eta} \sum_{k} h_k(x)\, (\mathscr{F}_{\mathbb{T}^{nr}}g_k)(-\eta),$$ where $\tilde{x}= (x,x,\ldots,x) \in \mathbb{T}^{nr}.$
		
		Therefore, for any $f= (f_1,f_2, \ldots,f_r) \in L^{p_1}(\mathbb{T}^n) \times \cdots \times L^{p_r}(\mathbb{T}^n)$
		\begin{align*}
		T_m(f)(x)& =\sum_{\xi\in \mathbb{Z}^{nr}}e^{i2\pi x\cdot (\xi_1+\xi_2+\cdots +\xi_r)  }m(x,\xi)(\mathscr{F}_{\mathbb{T}^n}{f}_{1})(\xi_1)\cdots (\mathscr{F}_{\mathbb{T}^n}{f}_{r})(\xi_r) \\
		& = \sum_{\xi\in \mathbb{Z}^{nr}} \left( \sum_{k} h_k(x)\, (\mathscr{F}_{\mathbb{T}^{nr}}g_k)(-\xi) \right)\mathscr{F}_{\mathbb{T}^n}{f}_{1})(\xi_1)\cdots (\mathscr{F}_{\mathbb{T}^n}{f}_{r})(\xi_r) \\
		&=  \sum_{\xi\in \mathbb{Z}^{nr}} \left( \sum_{k} h_k(x)\,\int_{\mathbb{T}^{nr}} g_k(y)\, e^{i2\pi \xi \cdot y} \,dy \right)\mathscr{F}_{\mathbb{T}^n}{f}_{1})(\xi_1)\cdots (\mathscr{F}_{\mathbb{T}^n}{f}_{r})(\xi_r) \\ 
		&= \int_{\mathbb{T}^{nr}} \left( \sum_{k} h_k(x)\,g_k(y)\, \right) \left( \sum_{\xi\in \mathbb{Z}^{nr}} e^{i2\pi \xi \cdot y} \mathscr{F}_{\mathbb{T}^n}{f}_{1})(\xi_1)\cdots (\mathscr{F}_{\mathbb{T}^n}{f}_{r})(\xi_r) \right)dy \\
		&= \int_{\mathbb{T}^{nr}} \left( \sum_{k} h_k(x)\,g_k(y)\, \right) f(y)\, dy.
		\end{align*}
		Therefore, by Remark \ref{VDelgado}, $T_m$ is a nuclear operator.
		
	\end{proof}
	
	Now, from other viewpoint, we present the following condition for the nuclearity of periodic multilinear operators by studying summability properties of their symbols, these condition can be applied for studying certain symbol classes. The criterion will be presented for periodic Fourier integral operators.
	
	\begin{theorem}\label{sNuclearFIO} Let us consider the real-valued function $\phi:\mathbb{T}^n\times \mathbb{Z}^{nr}\rightarrow \mathbb{R}.$
		Let us consider the Fourier integral operator \begin{equation*}\label{cp2}
		Af(x):=\sum_{\xi\in\mathbb{Z}^{nr}}e^{i\phi(x,\xi_1,\xi_2,\cdots,\xi_r)}a(x,\xi_1,\xi_2,\cdots,\xi_r)(\mathscr{F}_{\mathbb{T}^n}{f}_{1})(\xi_1)\cdots (\mathscr{F}_{\mathbb{T}^n}{f}_{r})(\xi_r) 
		\end{equation*} with symbol satisfying the summability condition
		\begin{equation*}
		\sum_{\xi \in  \mathbb{Z}^{nr} }\Vert a(\cdot,\xi_1,\xi_2,\cdots,\xi_r)\Vert^s_{L^p(\mathbb{T}^n)}<\infty.
		\end{equation*} Then $A$ extends to a $s$-nuclear, $0<s \leq 1,$ operators from $L^{p_1}(\mathbb{T}^n) \times \cdots \times L^{p_r}(\mathbb{T}^n)$ into $L^p(\mathbb{T}^n)$ provided that $1\leq p_j<\infty,$ and $1\leq p\leq \infty.$
	\end{theorem}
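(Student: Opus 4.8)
The plan is to recognise $A$ as a multilinear integral operator to which the $s$-nuclearity characterisation of Theorem~\ref{sigmafinitesVDelgado} and Remark~\ref{VDelgado} applies, with the finite measure spaces $X_i=\mathbb{T}^n$ ($1\le i\le r$) and $Y=\mathbb{T}^n$. First I would substitute $(\mathscr{F}_{\mathbb{T}^n}f_j)(\xi_j)=\int_{\mathbb{T}^n}e^{-i2\pi y_j\cdot\xi_j}f_j(y_j)\,dy_j$ into the definition of $A$ and exchange the sum over $\xi\in\mathbb{Z}^{nr}$ with the integral over $\mathbb{T}^{nr}$, arriving at
\begin{equation*}
Af(x)=\int_{\mathbb{T}^{nr}}\Big(\sum_{\xi\in\mathbb{Z}^{nr}}g_\xi(y)\,h_\xi(x)\Big)f_1(y_1)\cdots f_r(y_r)\,dy,
\end{equation*}
where I set $h_\xi(x):=e^{i\phi(x,\xi)}a(x,\xi)$ and $g_\xi:=(g_{\xi,1},\dots,g_{\xi,r})$ with $g_{\xi,j}(y_j):=e^{-i2\pi y_j\cdot\xi_j}$; the summation index ``$n$'' of Remark~\ref{VDelgado} is taken to run over the countable set $\mathbb{Z}^{nr}$.

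Next I would read off the norms. Since $\phi$ is real valued, $|h_\xi(x)|=|a(x,\xi)|$, so $\|h_\xi\|_{L^p(\mathbb{T}^n)}=\|a(\cdot,\xi)\|_{L^p(\mathbb{T}^n)}$; and since each $g_{\xi,j}$ is a unimodular character on $\mathbb{T}^n$, which has total mass one, $\|g_{\xi,j}\|_{L^{p_j'}(\mathbb{T}^n)}=1$, hence $\|g_\xi\|_{L^{p_1'}(\mathbb{T}^n)\times\cdots\times L^{p_r'}(\mathbb{T}^n)}=\prod_{j=1}^r\|g_{\xi,j}\|_{L^{p_j'}(\mathbb{T}^n)}=1$. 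Therefore
\begin{equation*}
\sum_{\xi\in\mathbb{Z}^{nr}}\|g_\xi\|_{L^{p_1'}(\mathbb{T}^n)\times\cdots\times L^{p_r'}(\mathbb{T}^n)}^{s}\,\|h_\xi\|_{L^p(\mathbb{T}^n)}^{s}=\sum_{\xi\in\mathbb{Z}^{nr}}\|a(\cdot,\xi)\|_{L^p(\mathbb{T}^n)}^{s}<\infty
\end{equation*}
by hypothesis. For $1\le p<\infty$, Theorem~\ref{sigmafinitesVDelgado} in its $s$-nuclear form (Remark~\ref{VDelgado}) then gives immediately that the multilinear integral operator with kernel $\sum_{\xi}g_\xi(y)h_\xi(x)$ is $s$-nuclear from $L^{p_1}(\mathbb{T}^n)\times\cdots\times L^{p_r}(\mathbb{T}^n)$ into $L^p(\mathbb{T}^n)$; since this operator coincides with $A$, the conclusion follows in this range.

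The endpoint $p=\infty$ is not covered by the criterion of Section~4, so there I would check $s$-nuclearity directly from the definition: one has $\langle f,g_\xi\rangle=\prod_{j=1}^r(\mathscr{F}_{\mathbb{T}^n}f_j)(\xi_j)$, so that $Af=\sum_{\xi\in\mathbb{Z}^{nr}}\langle f,g_\xi\rangle h_\xi$ is literally the definition of $A$; since $0<s\le 1$ the summability hypothesis forces $\sum_{\xi}\|h_\xi\|_{L^\infty(\mathbb{T}^n)}<\infty$, while $|\langle f,g_\xi\rangle|\le\prod_{j}\|f_j\|_{L^{p_j}(\mathbb{T}^n)}$ is bounded in $\xi$, so the series converges absolutely in $L^\infty(\mathbb{T}^n)$ and displays $A$ as $s$-nuclear into $L^\infty(\mathbb{T}^n)$.

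The hard part will not be the norm bookkeeping — which is essentially forced by $|e^{i\phi}|=1$ and by $\mathbb{T}^n$ carrying unit mass — but the justification that $A$, defined a priori only on a dense subspace such as $\mathscr{D}(\mathbb{T}^n)^r$ or the trigonometric polynomials, genuinely equals the integral operator produced by Theorem~\ref{sigmafinitesVDelgado}. This amounts to interchanging $\sum_{\xi\in\mathbb{Z}^{nr}}$ with $\int_{\mathbb{T}^{nr}}$, which I would justify by Lemma~\ref{FDel}: the bounds $\|g_\xi\|_{L^{p_1'}(\mathbb{T}^n)\times\cdots\times L^{p_r'}(\mathbb{T}^n)}=1$ together with $\sum_\xi\|a(\cdot,\xi)\|_{L^p(\mathbb{T}^n)}<\infty$ (itself a consequence of the hypothesis, since $s\le1$) place the kernel $\sum_\xi g_\xi(y)h_\xi(x)$ in $L^1(\mathbb{T}^{nr}\times\mathbb{T}^n)$ and legitimise passing the limit inside the integral against $f$. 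Once this identification is secured, uniqueness of the bounded multilinear extension finishes the argument.
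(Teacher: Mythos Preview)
Your proposal is correct and uses the same decomposition as the paper: the same functions $h_\xi(x)=e^{i\phi(x,\xi)}a(x,\xi)$ and the same functionals $f\mapsto\prod_j(\mathscr{F}_{\mathbb{T}^n}f_j)(\xi_j)$, with the same norm computations $\|h_\xi\|_{L^p}=\|a(\cdot,\xi)\|_{L^p}$ and $\|g_\xi\|\le 1$.

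The only difference is organisational. The paper does not pass through the integral-kernel criterion of Theorem~\ref{sigmafinitesVDelgado}; it works directly from the \emph{definition} of $s$-nuclearity, observing that $Af=\sum_{\xi}\langle e'_\xi,f\rangle h_\xi$ is already the nuclear representation, and then checking $\sum_\xi\|e'_\xi\|_{E'}^s\|h_\xi\|_{L^p}^s<\infty$. This is exactly the argument you give for the endpoint $p=\infty$, and it works verbatim for every $1\le p\le\infty$, so the case split and the appeal to Lemma~\ref{FDel} for interchanging sum and integral become unnecessary. Your route is a bit longer but perfectly valid; the paper's is shorter because the defining formula for $A$ already \emph{is} the nuclear decomposition.
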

	\begin{proof}
		Let us consider the multilinear operator defined by
		\begin{equation}\label{cp}
		Af(x):=\sum_{\xi\in\mathbb{Z}^{nr}}e^{i\phi(x,\xi_1,\xi_2,\cdots,\xi_r)}a(x,\xi_1,\xi_2,\cdots,\xi_r)(\mathscr{F}_{\mathbb{T}^n}{f}_{1})(\xi_1)\cdots (\mathscr{F}_{\mathbb{T}^n}{f}_{r})(\xi_r).
		\end{equation} For every $\xi\in \mathbb{Z}^{nr},$
		define the functions
		\begin{equation*}
		h_\xi(x):=e^{i\phi(x,\xi_1,\xi_2,\cdots,\xi_r)}a(x,\xi_1,\xi_2,\cdots,\xi_r),
		\end{equation*} and the functionals
		\begin{equation*}
		\langle e'_{\xi},f\rangle:=(\mathscr{F}_{\mathbb{T}^n}{f}_{1})(\xi_1)\cdots (\mathscr{F}_{\mathbb{T}^n}{f}_{r})(\xi_r).
		\end{equation*}
		By definition, $A$ extends to a $s$-nuclear multilinear operator from $E=L^{p_1}(\mathbb{T}^n) \times \cdots \times L^{p_r}(\mathbb{T}^n)$ into $F=L^p(\mathbb{T}^n)$ if
		\begin{equation*}
		\sum_{\xi \in  \mathbb{Z}^{nr} }\Vert e_\xi' \Vert^s_{E'}\Vert h_k \Vert^s_{L^p(\mathbb{R}^n)}=\sum_{\xi \in  \mathbb{Z}^{nr} }\Vert e_\xi' \Vert^s_{L^{p_1'}(\mathbb{T}^n) \times \cdots \times L^{p_r'}(\mathbb{T}^n)}\Vert h_k \Vert^s_{L^p(\mathbb{T}^n)}<\infty.
		\end{equation*}
		Taking into account that
		\begin{equation*}
		\Vert h_k \Vert_{L^p(\mathbb{T}^n)}=\Vert a(x,\xi_1,\xi_2,\cdots,\xi_r)\Vert_{L^p(\mathbb{T}^n_x)},
		\end{equation*}
		from the estimate
		\begin{equation*}
		|\langle e'_{\xi},f\rangle|\leq \prod_{j=1}^r\Vert f_j \Vert_{L^{p_j'}}
		\end{equation*}
		we deduce that, $\sup_\xi\Vert e'_\xi \Vert_{E'}\leq 1,$ and the condition
		\begin{equation*}
		\sum_{\xi \in  \mathbb{Z}^{nr} }\Vert a(\cdot,\xi_1,\xi_2,\cdots,\xi_r)\Vert^r_{L^p(\mathbb{T}^n)}<\infty,
		\end{equation*}
		implies the $s$-nuclearity of $A.$  So, we finish the proof.  
	\end{proof}

	\begin{corollary}\label{corollary}
		Let us assume that $a:\mathbb{T}^n\times \mathbb{Z}^{nr}\rightarrow \mathbb{C}$ satisfies estimates of the type,
		\begin{equation*}
		|a(x,\xi_1,\cdots,\xi_r)|\leq C \langle \xi\rangle^{-\varkappa},\,\,x\in\mathbb{T}^n,\,\xi\in\mathbb{Z}^{nr},\varkappa>0.
		\end{equation*}
	\end{corollary}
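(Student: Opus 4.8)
The plan is simply to check that the pointwise decay hypothesis forces the summability condition appearing in Theorem \ref{sNuclearFIO}, and then to invoke that theorem. Since the torus $\mathbb{T}^n$ has total mass $1$, the bound $|a(x,\xi_1,\cdots,\xi_r)|\leq C\langle\xi\rangle^{-\varkappa}$, valid uniformly in $x\in\mathbb{T}^n$, immediately gives
\begin{equation*}
\Vert a(\cdot,\xi_1,\cdots,\xi_r)\Vert_{L^p(\mathbb{T}^n)}\leq C\langle\xi\rangle^{-\varkappa},\qquad \xi=(\xi_1,\cdots,\xi_r)\in\mathbb{Z}^{nr},
\end{equation*}
for every $1\leq p\leq\infty$. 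Raising to the power $s$ and summing over the lattice yields
\begin{equation*}
\sum_{\xi\in\mathbb{Z}^{nr}}\Vert a(\cdot,\xi_1,\cdots,\xi_r)\Vert_{L^p(\mathbb{T}^n)}^s\leq C^s\sum_{\xi\in\mathbb{Z}^{nr}}\langle\xi\rangle^{-s\varkappa}.
\end{equation*}

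The only point requiring care is the convergence of this last series. Recalling that $\langle\xi\rangle=\max\{1,|\xi_1|+\cdots+|\xi_r|\}$ is comparable to $1+|\xi|$, with $|\xi|$ the $\ell^1$-norm on the $nr$-dimensional lattice, one counts the lattice points in the dyadic annuli $\{2^k\leq\langle\xi\rangle<2^{k+1}\}$, of which there are $\asymp 2^{knr}$; this reduces the sum to the geometric series $\sum_{k\geq 0}2^{knr}2^{-ks\varkappa}$ (equivalently, compare with $\int_{\mathbb{R}^{nr}}(1+|z|)^{-s\varkappa}\,dz$). Hence $\sum_{\xi\in\mathbb{Z}^{nr}}\langle\xi\rangle^{-s\varkappa}<\infty$ precisely when $s\varkappa>nr$, i.e. $\varkappa>nr/s$. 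Under this restriction on $\varkappa$ the symbol $a$ satisfies the hypothesis of Theorem \ref{sNuclearFIO}, so for any real-valued phase $\phi:\mathbb{T}^n\times\mathbb{Z}^{nr}\to\mathbb{R}$ the associated periodic multilinear Fourier integral operator $A$ extends to an $s$-nuclear operator, $0<s\leq 1$, from $L^{p_1}(\mathbb{T}^n)\times\cdots\times L^{p_r}(\mathbb{T}^n)$ into $L^p(\mathbb{T}^n)$, provided $1\leq p_j<\infty$ and $1\leq p\leq\infty$.

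I do not expect any genuine obstacle: the whole content is the elementary lattice-sum estimate above, and the borderline case $s\varkappa=nr$ already fails the summability test, so the condition $\varkappa>nr/s$ is the natural threshold. If one wishes to record a cleaner statement (e.g. $a$ belonging to the discrete Kohn--Nirenberg class $S^{-\varkappa}$ on $\mathbb{T}^n\times\mathbb{Z}^{nr}$), the same computation applies verbatim, since only the crude bound $|a(x,\xi)|\lesssim\langle\xi\rangle^{-\varkappa}$ is used.
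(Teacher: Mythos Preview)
Your proof is correct and follows essentially the same approach as the paper: bound $\Vert a(\cdot,\xi)\Vert_{L^p(\mathbb{T}^n)}$ by $C\langle\xi\rangle^{-\varkappa}$, observe that $\sum_{\xi\in\mathbb{Z}^{nr}}\langle\xi\rangle^{-s\varkappa}<\infty$ when $\varkappa>nr/s$, and invoke Theorem~\ref{sNuclearFIO}. The paper's proof is just a terser version of exactly this computation.
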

	If $\varkappa>nr/s,$ the periodic Fourier integral operator $A$  extends to a multilinear $s$-nuclear, $0 <s \leq 1,$  operator from $L^{p_1}(\mathbb{T}^n) \times \cdots \times L^{p_r}(\mathbb{T}^n)$ into $L^p(\mathbb{T}^n)$ for all $1\leq p_j<\infty$ and $1\leq p\leq \infty.$
	
	\begin{proof}
		From Theorem \ref{sNuclearFIO}, the operator $A$ is $s$-nuclear. In fact the series
		\begin{equation*}
		\sum_{\xi \in  \mathbb{Z}^{nr} }\Vert a(\cdot,\xi_1,\xi_2,\cdots,\xi_r)\Vert^s_{L^p(\mathbb{T}^n)}\lesssim \sum_{\xi \in  \mathbb{Z}^{nr} }\langle \xi\rangle^{-\varkappa s} <\infty,
		\end{equation*} converges due to condition  $\varkappa>nr/s.$ The proof is complete.
	\end{proof}
	\begin{example}\label{example1} In order to illustrate the previous conditions, we consider the multilinear Bessel potential. This can be introduced as follows. Consider the periodic multilinear Laplacian denoted by
		\begin{equation*}
		\mathscr{L}:=(\mathcal{L},\cdots,\mathcal{L}),
		\end{equation*} acting on $f=(f_1,\cdots,f_r)\in\mathscr{D}(\mathbb{T}^n)^r$ by

		\begin{align}
		&\mathscr{L}f(x):=(\mathcal{L} f_1(x))\cdots (\mathcal{L}f_r(x)) \\
		&=\sum_{(\xi_1,\cdots,\xi_r)}e^{i2\pi x(\xi_1+\cdots+\xi_r)}|\xi_1|^2\cdots |\xi_r|^2(\mathscr{F}_{\mathbb{T}^n}{f}_{1})(\xi_1)\cdots (\mathscr{F}_{\mathbb{T}^n}{f}_{r})(\xi_r). 
		\end{align} For $r=1,$ we recover the usual periodic Laplacian
		\begin{equation*}
		\mathcal{L}f(x)=-\frac{1}{4\pi^{2}}(\sum_{j=1}^n\partial_{\theta_j}^2)f(x)=\sum_{\xi\in\mathbb{Z}^n}e^{i2\pi x\cdot \xi}|\xi|^2(\mathscr{F}_{\mathbb{T}^n}{f})(\xi).
		\end{equation*} The multilinear Bessel potential of order $\alpha=(\alpha_1,\cdots,\alpha_r)\in \mathbb{N}_0^r,$
		\begin{equation*}
		(I+ \mathscr{L})^{-\frac{\alpha}{2}}:=((I+\mathcal{L})^{ -\frac{\alpha_1}{2}},\cdots,(1+\mathcal{L})^{ -\frac{\alpha_r}{2}  }),
		\end{equation*} 
		can be defined by the Fourier analysis associated to the torus as
		\begin{align}
		&(I+ \mathscr{L})^{-\frac{\alpha}{2}}f(x)=(I+\mathcal{L})^{ -\frac{\alpha_1}{2}}f_1(x)\cdots(1+\mathcal{L})^{ -\frac{\alpha_r}{2}  }f_r(x)\\
		&=\sum_{(\xi_1,\cdots,\xi_r)}e^{i2\pi x(\xi_1+\cdots+\xi_r)}\prod_{j=1}^r(1+|\xi_j|^2)^{-\frac{\alpha_j}{2}}(\mathscr{F}_{\mathbb{T}^n}{f}_{1})(\xi_1)\cdots (\mathscr{F}_{\mathbb{T}^n}{f}_{r})(\xi_r). 
		\end{align} From the estimate  $$a(x,\xi)=\prod_{j=1}^r(1+|\xi_j|^2)^{-\frac{\alpha_j}{2}}\leq \prod_{j=1}^r(1+|\xi_j|^2)^{-\min\limits_{1\leq j\leq r}  \{\frac{\alpha_j}{2}\} }\lesssim \langle \xi\rangle^{ -\min\limits_{1\leq j\leq r}\{{\alpha_j}\}  },$$
		Corollary \ref{corollary} applied to $a(x,\xi)=\prod_{j=1}^r(1+|\xi_j|^2)^{-\frac{\alpha_j}{2}},$ implies that the multilinear Bessel potential $(I+ \mathscr{L})^{-\frac{\alpha}{2}}$ extends to a $s$-nuclear operator   from $L^{p_1}(\mathbb{T}^n) \times \cdots \times L^{p_r}(\mathbb{T}^n)$ into $L^p(\mathbb{T}^n)$ for all $1\leq p_j<\infty$ and $1\leq p\leq \infty$ provided that
		\begin{equation*}
		\varkappa:=\min\limits_{1\leq j\leq r}\{{\alpha_j}\}>{nr}/{s}.
		\end{equation*} This conclusion is sharp, in the sense that if we restrict our analysis to $r=1$ and $p_1=p=2,$ the operator $(I+ \mathcal{L})^{-\frac{\alpha}{2}}$ extends to a $s$-nuclear operator   on $L^{2}(\mathbb{T}^n)$ if and only if $\varkappa:=\alpha>{nr}/{s}={n}/{s}.$ In fact, the class of $s$-nuclear operators on $L^2(\mathbb{T}^n)$ agrees with the Schatten-von Neumann class $S_s(L^2(\mathbb{T}^n))$ of order $s.$ In this case, let us recall that the class $S_r(H)$ of Schatten-von Neumann operators on a Hilbert space $H,$ consists of those compact operators $T$ on $H$ with a system of singular values $\{\lambda_j({T})\}_j:=\textnormal{Spec}(\sqrt{T^*T}),$ satisfying
		\begin{equation*}
		\sum_j \lambda_j({T})^s<\infty.
		\end{equation*} For $T=(I+ \mathcal{L})^{-\frac{\alpha}{2}}$ on $H=L^2(\mathbb{T}^n),$ the system of eigenvalues of the operator $\sqrt{T^*T},$ $\textnormal{Spec}(\sqrt{T^*T})$ is given by
		\begin{equation*}
		\{\lambda_j({T})\}_j=\{(1+|\xi|^2)^{-\frac{\alpha}{2}}:\xi\in\mathbb{Z}^{n}\}.
		\end{equation*} So, for $0<s\leq 1,$  $(I+ \mathcal{L})^{-\frac{\alpha}{2}}$ extends to a $s$-nuclear, $0 <s \leq 1,$ operator   on $L^{2}(\mathbb{T}^n),$ if and only if
		\begin{equation*}
		\sum_j \lambda_j({T})^s=\sum_{\xi\in\mathbb{Z}^{n}} (1+|\xi|^2)^{-\frac{\alpha s}{2}}  <\infty.
		\end{equation*} But, the previous condition holds true if and only $  \alpha>n/s.$
	\end{example}

	\begin{example}
		Now, we consider symbols admitting some type of singularity at the origin. In this general context, let us choose a sequence $\kappa\in L^s(\mathbb{Z}^{nr}).$ Let us consider the symbol
		\begin{equation*}
		a(x,\xi):=\frac{1}{|x|^\rho}\kappa(\xi),\,\,x\in \mathbb{T}^n,\,x\neq 0,\,\xi\in\mathbb{Z}^{nr},\,\,\rho>0 .
		\end{equation*} If we consider the Fourier integral operator associate to $a(\cdot,\cdot),$
		$$ Af(x):=\sum_{\xi\in\mathbb{Z}^{nr}}e^{i\phi(x,\xi_1,\xi_2,\cdots,\xi_r)}\frac{1}{|x|^\rho}\kappa(\xi_1,\cdots,\xi_r)(\mathscr{F}_{\mathbb{T}^n}{f}_{1})(\xi_1)\cdots (\mathscr{F}_{\mathbb{T}^n}{f}_{r})(\xi_r), $$ the condition 
		\begin{equation*}
		0<\rho<n/p,
		\end{equation*}
		implies that the periodic Fourier integral operator $A$  extends to a $s$-nuclear multilinear operator from $L^{p_1}(\mathbb{T}^n) \times \cdots \times L^{p_r}(\mathbb{T}^n)$ into $L^p(\mathbb{T}^n)$ for all $1\leq p_j<\infty$ and $1\leq p\leq \infty.$ In fact, by Theorem \ref{sNuclearFIO}, we only need to verify that
		$$  \sum_{\xi \in  \mathbb{Z}^{nr} }\Vert a(\cdot,\xi_1,\xi_2,\cdots,\xi_r)\Vert^s_{L^p(\mathbb{T}^n)} =\left(\int\limits_{\mathbb{T}^n}\frac{dx}{|x|^{p\cdot\rho }}\right)^{\frac{s}{p}}\sum_{\xi \in  \mathbb{Z}^{nr} }|\kappa(\xi)|^s<\infty. $$ But, this happens only if $0<\rho<n/p.$
	\end{example}
	Sharp conditions for the $s$-nuclearity of pseudo-differential operators on $\mathbb{S}^1$ and $\mathbb{Z}$ were first introduced in Delgado and Wong \cite{DW}. Later, sharp conditions and its applications to H\"ormander classes, Laplacian and sub-Laplacians on arbitrary compact Lie groups were investigated in Delgado and Ruzhansky in \cite{DR,DR1,DR3,DR5,kernelcondition,DRboundedvariable2}. Finally, the approach in the first part in this subsection was adopted to the multilinear case, from  the abstract characterisations of Ghaemi, Jamalpour Birgani, and  Wong \cite{Ghaemi,Ghaemi2} and  Jamalpour Birgani \cite{Majid}. 
	
	\section*{Acknowledgment}
	The authors would like to thank the anonymous referees  for their valuable suggestions  which help us to improve the presentation of this article.
	Vishvesh Kumar thanks the Council of Scientific and Industrial Research, India, for its senior research fellowship. Duv\'an Cardona was partially supported by the Department of Mathematics, Pontificia Universidad Javeriana.
	\bibliographystyle{amsplain}

\begin{thebibliography}{99}
		
		\bibitem{ag} Agranovich, M. S.: {Spectral properties of elliptic pseudo-differential operators on a closed curve}. {\it Funct. Anal. Appl.}  13, 279-281 (1971).
		
		\bibitem{Aoki} Aoki, S.:  On the boundedness and the nuclearity of pseudo-differential operators. {\it Communications in Partial Differential Equations}, 6(8), 849-881 (1981). 
		
		\bibitem{Benyi1} B\'enyi, Á., Bernicot, F., Maldonado, D., Naibo, V.: Torres, R.: On the H\"ormander classes of bilinear pseudo-differential operators II. {\it Indiana Univ. Math. J.} 62, 1733-1764 (2013).
		
		\bibitem{Benyi2} B\'enyi, Á., Maldonado, D., Naibo, V., Torres, R.: On the Hörmander classes of bilinear pseudodifferential operators. {\it Integral Equ. Oper. Theory} 67, 341-364 (2010).
		
		
		\bibitem{Ruzhansky} Botchway L., Kibiti G., Ruzhansky M.: Difference equations and pseudo-differential operators on $\mathbb{Z}^n$, {\it arXiv:1705.07564}.
		
		\bibitem{Duvan3}  Cardona, D.: Weak type (1, 1) bounds for a class of periodic pseudo-differential operators. { \it J. Pseudo-Differ. Oper. Appl.}, 5(4), 507-515 (2014).
		
		\bibitem{Duvan4} Cardona, D.:  On the boundedness of periodic pseudo-differential operators.  {\it Monatsh. f\"ur Math.},  185(2), 189-206 (2017). 
		
		
		\bibitem{CardonaZn} Cardona, D. Pseudo-differential operators on $\mathbb{Z}^n$ with applications to discrete fractional integral operators. {\it Bull. Iran. Math. Soc.} To appear. DOI: 10.1007/s41980-018-00195-y. 
		
		\bibitem{CardonaKumar} Cardona, D.,  Kumar, V.  Multilinear analysis for discrete and periodic pseudo-differential operators in $L^p$ spaces,  Rev. Integr. temas Mat. Vol 36,  (2) (2018), 151-164.
		
		\bibitem{CarMessiSeno}  Cardona, D.,  Messiouene, R., and Senoussaoui, A.: $L^p$-bounds for Fourier integral operators on the torus. {\it arXiv:1807.09892}.
		
	
		
		\bibitem{Catana} Catana, V.: $L^p$-boundedness of Multilinear Pseudo-differential Operators on $\mathbb{Z}^n$ and $\mathbb{T}^n$. {\it Math. Model. of Nat. Phenomena}, 9(5), 17-38 (2014).
		
		\bibitem{CoifMeye1} Coifman R. and Meyer Y.: On commutators of singular integrals and bilinear singular integrals. {\it Trans. Amer. Math. Soc.}, 212, 315-331 (1975).
		
		\bibitem{CoifMeye2} Coifman R. and Meyer Y.: {\it Ondelettes et operateurs III. Operateurs multilineaires}, Hermann, Paris, (1991).
		
		\bibitem{Profe} Delgado, J.: $L^p$ bounds for pseudo-differential operators on the torus. {\it Operators Theory, advances and applications}, 231,  103-116 (2012).
		
		
		\bibitem{Delgado} Delgado, J.: A trace formula for nuclear operators on $L^p$. In: Schulze, B.W., Wong, M.W. (eds.)
		{\it Pseudo-Differential Operators: Complex Analysis and Partial Differential Equations, Operator Theory:
		Advances and Applications, Birkhäuser, Basel}, 205, 181-193 (2010).
		
		\bibitem{DW} Delgado, J. and  Wong, M.W.: $L^p$-nuclear pseudo-differential operators on $\mathbb{Z}$ and $\mathbb{S}^1.$  {\it Proc. Amer. Math. Soc.}, 141 (11), 3935-3944 (2013). 
		
		\bibitem{D2} Delgado, J.: The trace of nuclear operators on $L^p(\mu)$ for $\sigma$-finite Borel measures on second countable spaces. {\it Integral Equations Operator Theory}, 68(1), 61–74, (2010).
		
		\bibitem{D3} Delgado, J.: On the $r$-nuclearity of some integral operators on Lebesgue spaces. {\it Tohoku Math. J.} 67(2), 125-135, (2015).
		
		\bibitem{DR} Delgado, J. and Ruzhansky, M.:  $L^p$-nuclearity, traces, and Grothendieck-Lidskii formula on compact Lie groups. {\it J. Math. Pures Appl.}, \textbf{(9)}, 102(1),  153-172 (2014).
		
		\bibitem{DR1}  Delgado, J. and  Ruzhansky, M.: Schatten classes on compact manifolds: Kernel conditions. {\it J.  Funct. Anal.}, 267(3), 772-798 (2014)
		
		\bibitem{DR3} Delgado, J. and  Ruzhansky, M.: Kernel and symbol criteria for Schatten classes and r-nuclearity on compact manifolds.{\it  C. R. Acad. Sci. Paris. Ser. I.}, 352,  779--784 (2014).
		
		\bibitem{DR5} Delgado, J. and Ruzhansky, M.: Fourier multipliers, symbols and nuclearity on compact manifolds. {\it J. Anal. Math.},  135(2) 757--800 (2018).
		
		\bibitem{kernelcondition}  Delgado, J. and  Ruzhansky, M. : Schatten-von Neumann classes of integral operators. {\it arXiv:1709.06446}.
		
		\bibitem{DRboundedvariable} Delgado J. and Ruzhansky M.: The bounded approximation property of variable Lebesgue spaces and nuclearity. {\it Math. Scand.}, 122, 299-319 (2018). 
		
		\bibitem{DRboundedvariable2} Delgado J. and Ruzhansky M.: Schatten classes and traces on compact groups. {\it Math. Res. Lett.}, 24,  979-1003 (2017).
		
		
		\bibitem{DRB} Delgado, J. Ruzhansky, M. and  Wang, B. : . Approximation property and nuclearity on mixed-norm $L^p$, modulation and Wiener amalgam spaces. {\it J. Lond. Math. Soc.}  94, 391-408, (2016).
		
		\bibitem{DRB2} Delgado, J. Ruzhansky, M. and Wang, B.: Grothendieck-Lidskii trace formula for mixed-norm $L^p$ and variable Lebesgue spaces. to appear in  {\it J. Spectr. Theory}.  6(4), 781--791,  (2016). 
		
		
		
		\bibitem{DRTk}  Delgado, J. Ruzhansky, M. and  Tokmagambetov, N.: Schatten classes, nuclearity and nonharmonic analysis on compact manifolds with boundary.    J. Math. Pures Appl. (9) 107, no. 6, 758–783, (2017). 
		
		
		\bibitem{Fujita}  Fujita, M. and Tomita, N. : Weighted norm inequalities for multilinear Fourier multipliers. {\it Trans. Amer. Math. Soc.}, 364(12), 6335-6353 (2012).
		
		
		\bibitem{Ghaemi} Ghaemi, M. B., Jamalpour Birgani, M. and  Wong, M. W. : Characterizations of nuclear pseudo-differential operators on $\mathbb{S}^1$ with applications to adjoints and products. {\it J. Pseudo-Differ. Oper. Appl.} 8(2), 191-201 (2017). 
		
		\bibitem{Ghaemi2} Ghaemi, M. B., Jamalpour Birgani, M. and Wong, M. W. : Characterization, adjoints and products of nuclear pseudo-differential operators on compact and Hausdorff groups. {\it U.P.B. Sci. Bull., Series A}, 79(4), 207-220 (2017).
		
		
		
		
		\bibitem{Gra1}  Grafakos, L.,  Miyachi A. and Tomita N. : On multilinear Fourier multipliers of limited smoothness. {\it Canad. J. Math.} 65,  299-330 (2013).
		
		\bibitem{Gra2}  Grafakos, L., Si, Z.: The H\"ormander multiplier theorem for multilinear operators.  {\it J. Reine Angew. Math.}, 668, 133-147 (2012).
		
		\bibitem{GrafakosPDE's} Grafakos, L. ( 2012 ). Multilinear Operators In Harmonic Analysis and Partial Differential Equations. Kyoto: Research Institute of Mathematical Sciences (Kyoto).
		
		\bibitem{Gra3}  Grafakos, L. and Torres, R.:  Discrete decompositions for bilinear operators and almost diagonal conditions. {\it Trans. Amer. Math. Soc.}, 354, 1153–1176 (2012).
		
		\bibitem{Gra4}  Grafakos, L. and  Torres, R. :  Multilinear Calderón- Zygmund theory, {\it Adv. Math.} 165,  124–164 (2002).
		
		\bibitem{Hor2} H\"ormander, L.: {\it The Analysis of the linear partial differential operators Vol. III, IV.} Springer-Verlag, (1985).
		
		\bibitem{Kenig} Kenig C. and Stein E.: Multilinear estimates and fractional integration. {\it Math. Res. Lett.}, 6, 1-15, (1999).
		
		
		\bibitem{Majid}  Jamalpour Birgani, M.: Characterizations of Nuclear Pseudo-differential Operators on $\mathbb{Z}$ with some Applications. {\it Math. Model. of Nat. Phenomena.}, 13, 13-30 (2018).
		
		\bibitem{Mc} Mclean, W.M.: Local and Global description of periodic pseudo-differential operators. {\it Math. Nachr.}, 150,  151-161 (1991).
		
		
		\bibitem{Michalowski} Michalowski, N., Rule, D.,  and Staubach, W.: Multilinear pseudodifferential operators beyond Calderón–Zygmund operators. {J. Math. Anal. Appl.}, 414,  149-165 (2014).
		
		\bibitem{MiTo1} Miyachi, A. and  Tomita, N.: Minimal smoothness conditions for bilinear Fourier multipliers. {\it Rev. Mat. Iberoam.} 29, 495-530 (2013).
		
		\bibitem{MiTo2} Miyachi, A. and Tomita, N.: Calderón-Vaillancourt type theorem for bilinear operators. {\it Indiana Univ. Math. J.} 62, 1165-1201 (2013).
		
		\bibitem{MiTo3} Miyachi, A. and Tomita, N.: Bilinear pseudo-differential operators with exotic symbols, To appear in {\it Ann. Inst. Fourier (Grenoble), arXiv:1801.06744}.
		
		
		\bibitem{s1} Molahajloo, S.: A characterization of compact pseudo-differential operators on $\mathbb{S}^1$. {\it  Oper. Theory Adv. Appl. Birkh\"user/Springer Basel AG, Basel.}, 213, 25-29 (2011).
		
		\bibitem{s2} Molahajloo, S. and Wong, M.W.: Pseudo-differential Operators on $\mathbb{S}^1$. In: {\it  New developments in pseudo-differential operators, Eds. L. Rodino and M.W. Wong.}  297-306 (2008).
		
		\bibitem{m} Molahajloo, S., Wong, M. W. .:  Ellipticity, Fredholmness and spectral invariance of pseudo-differential operators on $\mathbb{S}^1.$ {\it J. Pseudo-Differ. Oper. Appl.}, 1, 183-205 (2010).
		
		\bibitem{Tao} Muscalu C., Tao T. and Thiele C.: Multilinear operators given by singular multipliers. {\it  J. Amer. Math. Soc.}, 15, 469-496 (2002).
		
		

       \bibitem{Muscaluperiodic} Muscalu, C.,  Schlag, W. Classical and multilinear harmonic analysis. Vol. II. Cambridge
Studies in Advanced Mathematics, 138. Cambridge University Press, Cambridge,
2013.
		
		\bibitem{Rab1} Rabinovich, V. S. : Exponential estimates of solutions of pseudo-differential equations on the lattice $(\mu\mathbb{Z})^n$: applications to the lattice Schr\"odinger and Dirac operators. {J. Pseudo-Differ. Oper. Appl.}, 1(2),  233--253 (2010).
		
		\bibitem{Rab2} Rabinovich, V. S.:  Wiener algebra of operators on the lattice $(\mu\mathbb{Z})^n$ depending on the small parameter $\mu>0$. {\it Complex Var. Elliptic Equ.}, 58(6),  751-766, (2013).
		
		\bibitem{Rab3}  Rabinovich, V. S. and  Roch, S.: The essential spectrum of Schr\"odinger operators on lattices. {\it J. Phys. A}, 39(26), 8377-8394 (2006).
		
		\bibitem{Rab4} Rabinovich, V. S. and   Roch, S.: Essential spectra and exponential estimates of eigenfunctions of lattice operators of quantum mechanics. {\it J. Phys. A}, 42(38),  385--207 (2009).
		
		
		\bibitem{Rempala} Rempala, J. A.:  On a proof of the boundedness and nuclearity of pseudodifferential operators in  $\mathbb{R}^n.$ {\it Annales Polonici Mathematici.}, 52, 59-65 (1990).
		
		\bibitem{rod} Rodriguez, C. A.: $L^p-$estimates for pseudo-differential operators on $\mathbb{Z}^n$. {\it J. Pseudo-Differ. Oper. Appl.}, 1, 183-205 (2011).
		
		\bibitem{Ruz-2} Ruzhansky, M. and Turunen, V.: Quantization of Pseudo-Differential Operators on the Torus. {\it J. Fourier Anal. Appl.}, 16, 943-982 (2010).
		
		
		
		\bibitem{Ruz} Ruzhansky, M., Turunen, V.: {\it Pseudo-differential Operators and Symmetries: Background Analysis and Advanced Topics}. Birkha\"user-Verlag, Basel, (2010).
		
		\bibitem{Tomitabilinear}   Tomita, N. : A H\"ormander type multiplier theorem for multilinear operators. {\it J. Funct. Anal.}, 259, 2028-2044, (2010).
		
		\bibitem{tur} Turunen, V. and  Vainikko, G.: On symbol analysis of periodic pseudodifferential operators. {\it Z. Anal. Anwendungen.}, 17,  9-22 (1998).
		
		\bibitem{SteWei} Stein, E. and Weiss, G.: {\it Introduction to Fourier analysis on Euclidean spaces}. Princeton University Press, Princeton, N.J., (1971).
		
		
	\end{thebibliography}

\end{document}